\documentclass[12pt]{article}
\usepackage[utf8]{inputenc}
\usepackage[T1]{fontenc}
\usepackage{amsmath,amssymb,amsthm,amsfonts}
\usepackage{mathtools}
\usepackage{bm}
\usepackage[margin=1.1in]{geometry}
\usepackage{hyperref}
\usepackage{graphicx}
\usepackage{booktabs}
\usepackage{array}
\usepackage{enumitem}
\usepackage{xcolor}
\usepackage{setspace}
\usepackage{titlesec}
\usepackage[numbers]{natbib}
\usepackage{float}
\usepackage{caption}
\usepackage{subcaption}
\usepackage{tcolorbox}
\tcbuselibrary{theorems}

\hypersetup{
  colorlinks=true,
  linkcolor=blue!60!black,
  citecolor=green!50!black,
  urlcolor=blue!70!black
}

\theoremstyle{plain}
\newtheorem{theorem}{Theorem}[section]
\newtheorem{proposition}[theorem]{Proposition}
\newtheorem{lemma}[theorem]{Lemma}
\newtheorem{corollary}[theorem]{Corollary}
\theoremstyle{definition}
\newtheorem{definition}[theorem]{Definition}

\newtheorem{remark}[theorem]{Remark}
\newtheorem{assumption}[theorem]{Assumption}

\DeclareMathOperator{\Tr}{Tr}

\DeclareMathOperator{\spec}{\sigma}

\newcommand{\R}{\mathbb{R}}
\newcommand{\E}{\mathbb{E}}
\newcommand{\Prob}{\mathbb{P}}
\newcommand{\Lgen}{\mathcal{L}}
\newcommand{\W}{\mathbf{W}}

\newcommand{\norm}[1]{\left\lVert #1 \right\rVert}
\newcommand{\abs}[1]{\left| #1 \right|}
\newcommand{\ip}[2]{\langle #1, #2 \rangle}
\newcommand{\dif}{\,\mathrm{d}}
\newcommand{\loss}{L}

\title{%
  \textbf{Joint Lyapunov Certificates for $K$-Agent\\
  Generative AI Governance}\\[0.5em]
  \large Stochastic Stability, Emergent Ensemble Risk,\\
  and Zero-Knowledge Governance Attestation
}
\author{%
  Sriram Nagaraj\footnote{dr.sri.nagaraj@gmail.com}
}
\date{}

\begin{document}
\maketitle
\thispagestyle{empty}

\begin{abstract}
  We develop a rigorous mathematical framework for the governance of systems of
  $K$ self-adapting generative AI models under the principles of
  Model Risk Management (MRM).  When multiple models share a
  meta-learning coupling $\gamma$ through an interaction matrix $A$, the
  per-agent Lyapunov analysis that underpins standard MRM is provably insufficient: individual agents can each satisfy their
  declared stability bounds while the \emph{joint} system is in a
  regime of emergent ensemble-level drift.  We formalize this gap through the
  \emph{Joint Lyapunov Proof} (JLP)---a cryptographic and stochastic
  protocol that attests, without revealing proprietary weights, that the
  aggregate dynamics $\W_t = \{W^1_t, \ldots, W^K_t\}$ satisfy MRM Ongoing Monitoring standard at every validation epoch $\tau_n$.

  Our main contributions are fourfold.  First, we give a complete
  characterization of the infinitesimal generator $\Lgen V$ of the joint
  quadratic Lyapunov function $V(\W) = \tfrac{1}{2}\sum_k \norm{W^k}^2$
  under the coupled It\^{o} SDE and prove that the naive self-decay bound
  $\alpha_{\mathrm{eff}} = 2\alpha_{\mathrm{self}}$ fails as soon as
  $\gamma > 0$.  Second, we derive the exact critical coupling threshold
  $\gamma^*(A) = \alpha_{\mathrm{self}} / \abs{\lambda_{\min}(A)}$, where
  $\lambda_{\min}(A)$ is the most negative eigenvalue (real part) of $A$,
  above which the system loses mean-square stability.  The
  destabilizing mode is governed by $\lambda_{\min}(A)$---not the Perron
  root $\lambda_{\max}(A)$, which is the consensus mode, is the \emph{most}
  stable direction, and equals $1$ for every row-stochastic topology (so it
  cannot encode topology-dependent risk).  The quadratic Lyapunov function
  yields a \emph{safe sufficient} certificate $\gamma <
  \alpha_{\mathrm{self}}/\abs{\lambda_{\min}(A_{\mathrm{sym}})}$,
  $A_{\mathrm{sym}}=\tfrac12(A+A^\top)$, which coincides with $\gamma^*$ for
  normal topologies and is strictly conservative for non-normal ones (the
  star), so it never certifies an unstable system.  Third, we prove a
  Noise-Floor Theorem establishing that the stationary second moment
  satisfies $\E[V_\infty] = \beta/(2\alpha_{\mathrm{self}})$ with
  $\beta = \tfrac{1}{2}Kd\sigma^2$ exactly in the decoupled case.
  Fourth, we identify the correct target for zero-knowledge attestation.
  A per-epoch Succinct Non-Interactive Argument of Knowledge (SNARK) on the live weights proving the pointwise generator
  inequality $\Lgen V(\W_t)<-\alpha V(\W_t)+\beta$ is \emph{vacuous}: by the
  stability theorem the inequality holds at every $\W$ once the coefficients
  are in the stable regime, and it is in fact satisfied with greater slack
  as $\norm{\W_t}\to\infty$, so a blown-up system passes most comfortably.
  The property that actually governs stability---$\gamma<\gamma^*(A)$---is a
  function of the declared $(A,\alpha_{\mathrm{self}},\gamma)$ and needs no
  weights; we therefore construct a SNARK $\pi_{\mathrm{JLP}}$ that proves
  the static spectral certificate
  $\alpha_{\mathrm{self}}I+\gamma A_{\mathrm{sym}}\succ0$ on a committed
  coupling matrix via a Cholesky factorization, established once per
  configuration rather than at every epoch.  All
  theoretical claims are validated against five numerical studies using a $K=5$ (resp.\ $K=10$) multi-agent softmax system.
\end{abstract}

\tableofcontents
\newpage

\section{Introduction}
\label{sec:intro}

The deployment of multiple interacting generative AI models within a
single institution---each capable of updating its own internal
parameters in response to market data---creates a new class of ensemble-level
risk that lies outside the scope of classical single-model Model Risk Management (MRM).
Traditional MRM compliance treats each model as an independent unit:
the model owner validates the model, documents its assumptions, monitors
its outputs, and attests to conceptual soundness.  This paradigm was
designed for static or slowly-evolving statistical models.  It breaks
down, we argue, in the multi-agent generative setting for two reasons.

\paragraph{Reason 1: Cross-agent coupling induces hidden correlated drift.}
When a meta-learning parameter $\Phi_t$---a shared embedding layer,
a common reinforcement-learning reward signal, or a joint fine-tuning
objective---influences all $K$ agents simultaneously, each agent's
weight update becomes correlated with all others through the coupling
matrix $A$.  A small bias $h$ in $\Phi_t$ propagates through the coupling
so that all agents drift in the same direction, even when no single
agent's individual drift exceeds its declared threshold.

\paragraph{Reason 2: Individual stability does not imply joint stability.}
Our Theorem~\ref{thm:emergent} (Emergent Ensemble Risk) shows rigorously
that there exist parameter regimes in which the per-agent Lyapunov
function $V^k(t) = \tfrac{1}{2}\norm{W^k_t}^2$ satisfies
$V^k(t) \leq \theta_{\mathrm{ind}}$ for every $k = 1,\ldots,K$, while
the \emph{joint} function $V(\W_t) = \sum_k V^k(t)$ exceeds the
threshold $\theta_{\mathrm{joint}} = (1+\delta)K\,\E[V^k_\infty]$ for
any $\delta > 0$, when a coordinated hidden drift is present.  This
is not a pathological corner case: it follows directly from the
variance--aggregation structure of independent 1D Ornstein--Uhlenbeck
processes.

\paragraph{The JLP protocol.}
We address both failure modes through the Joint Lyapunov Proof.  The
stability of the joint system is governed by whether the declared coupling
configuration satisfies the spectral certificate
\begin{equation}
  \gamma < \gamma^*(A) = \frac{\alpha_{\mathrm{self}}}{\abs{\lambda_{\min}(A)}}
  \qquad\Longleftarrow\qquad
  \alpha_{\mathrm{self}} I + \gamma A_{\mathrm{sym}}\succ0,
  \label{eq:jlp_bound}
\end{equation}
a condition on $(A,\alpha_{\mathrm{self}},\gamma)$ that involves no live
weights.  The firm submits, once per model-configuration change, a
zero-knowledge attestation $\pi_{\mathrm{JLP}}$ proving the safe (Cholesky)
certificate on the right of \eqref{eq:jlp_bound} for its committed---possibly
proprietary---coupling matrix, together with a hash binding for
tamper-evidence.  As we show in Section~\ref{sec:zk}, the natural-looking
alternative of proving a pointwise generator inequality at the live weights
each epoch is vacuous, which is why the attestation targets the static
spectral certificate instead.

\paragraph{Scope and assumptions.}
We work in continuous time throughout.  The stochastic processes are
defined on a complete filtered probability space
$(\Omega, \mathcal{F}, (\mathcal{F}_t)_{t\geq 0}, \Prob)$ satisfying the
usual conditions.  Brownian motions $B^1, \ldots, B^K$ are mutually
independent $d$-dimensional standard Wiener processes.  All model weights
$W^k \in \R^d$ are elements of a finite-dimensional Euclidean space.
We make no claim about the specific architecture of the models; the
analysis applies to any system whose weight dynamics are governed by the
SDE described in Section~\ref{sec:model}.

\paragraph{Notation.}
We write $\norm{\cdot}$ for the Euclidean norm on $\R^d$ and
$\norm{\cdot}_F$ for the Frobenius norm on $\R^{d \times d}$.
For a matrix $A \in \R^{K \times K}$, $\lambda_{\max}(A)$ denotes its
spectral radius (largest eigenvalue in absolute value) and $\lambda_2(A)$
the second-largest eigenvalue.  We write $\lambda_{\min}(A)$ for the
eigenvalue of $A$ with the smallest (most negative) real part, and
$A_{\mathrm{sym}}=\tfrac12(A+A^\top)$ for the symmetric part with smallest
eigenvalue $\lambda_{\min}(A_{\mathrm{sym}})$.  The \emph{exact} second-moment
stability threshold is governed by $\lambda_{\min}(A)$ (the spectral
abscissa of the joint drift operator); the quadratic-Lyapunov \emph{sufficient}
certificate is governed by $\lambda_{\min}(A_{\mathrm{sym}})$.  These
coincide when $A$ is normal and differ otherwise; neither is
$\lambda_{\max}(A)$, which is identically $1$ for every row-stochastic $A$
by Perron--Frobenius.  We use $\ip{u}{v}$ for the standard inner product
on $\R^d$.

\paragraph{Organization.}
Section~\ref{sec:model} defines the multi-agent SDE and its standing
assumptions.  Section~\ref{sec:lyapunov} develops the joint Lyapunov
theory, culminating in the Stability Theorem and the Noise-Floor Theorem.
Section~\ref{sec:phase} analyzes the stability threshold and proves the
critical coupling formula.  Section~\ref{sec:zk} constructs the
ZK-SNARK.  Section~\ref{sec:sr117} gives a detailed treatment of the
MRM mapping.  Section~\ref{sec:numerical} presents five numerical
studies.  Section~\ref{sec:discussion} discusses implications and
Section~\ref{sec:conclusion} concludes.

\section{Multi-Agent System Dynamics}
\label{sec:model}

\subsection{The Coupled SDE}

We consider $K \geq 2$ agents with weight vectors
$W^1, \ldots, W^K \in \R^d$.  The joint state is
$\W = (W^1, \ldots, W^K) \in \R^{dK}$.  The dynamics are governed by
the coupled system of It\^{o} SDEs (for the continuous-time stochastic
framework see \cite{pham2009continuous}):
\begin{equation}
  \dif W^k_t \;=\; \mu^k(\W_t)\,\dif t + \sigma(W^k_t)\,\dif B^k_t,
  \qquad k = 1, \ldots, K,
  \label{eq:sde}
\end{equation}
where the drift and diffusion are, respectively,
\begin{align}
  \mu^k(\W) &= -\nabla \loss_k(W^k) + \gamma \sum_{j=1}^K A_{kj}
               (\Phi - W^j),
  \label{eq:drift} \\
  \sigma(W^k) &= \sigma_0 \, I_d.
  \label{eq:diffusion}
\end{align}
Here $\loss_k : \R^d \to \R$ is agent $k$'s loss function (the leading term
is gradient \emph{descent}, hence the minus sign),
$A \in \R^{K \times K}$ is the \emph{coupling adjacency matrix},
$\Phi \in \R^d$ is the \emph{meta-learning parameter} (a shared
governance or fine-tuning target), $\gamma \geq 0$ is the coupling
strength, and $\sigma_0 > 0$ is the noise coefficient.

\begin{assumption}[Self-decay]
  \label{ass:loss}
  Each agent's loss is $\alpha_{\mathrm{self}}$-strongly convex and isotropic
  about the origin, with
  \[
    \loss_k(w) \;=\; \tfrac{1}{2}\,\alpha_{\mathrm{self}}\,\norm{w}^2,
    \qquad\text{so}\qquad
    \nabla \loss_k(w) \;=\; \alpha_{\mathrm{self}}\, w ,
  \]
  for some $\alpha_{\mathrm{self}} > 0$, all $w \in \R^d$, and every $k$.
  Descending this loss therefore exerts a mean-reverting self-restoring force
  $-\nabla\loss_k(w) = -\alpha_{\mathrm{self}}w$ with rate $\alpha_{\mathrm{self}}$.
\end{assumption}

Under Assumption~\ref{ass:loss}, the drift simplifies to
\begin{equation}
  \mu^k(\W) = -\alpha_{\mathrm{self}} W^k +
  \gamma \sum_{j=1}^K A_{kj}(\Phi - W^j).
  \label{eq:drift_simple}
\end{equation}

\begin{assumption}[Admissible adjacency matrix]
  \label{ass:A}
  The matrix $A \in \R^{K \times K}$ satisfies:
  (i) $A_{kj} \geq 0$ for all $k, j$;
  (ii) $A_{kk} = 0$ (no self-coupling);
  (iii) $\sum_{j=1}^K A_{kj} = 1$ for all $k$ (row-stochastic).
\end{assumption}

Condition~(iii) ensures that the coupling term
$\gamma \sum_j A_{kj}(\Phi - W^j)$ has a natural interpretation as
agent $k$ being pulled toward a $A$-weighted average of the
meta-parameter $\Phi$ and the other agents' weights.

\begin{assumption}[Bounded meta-parameter]
  \label{ass:phi}
  $\Phi \in \R^d$ is a fixed vector with $\norm{\Phi} \leq \Phi_{\max}
  < \infty$.
\end{assumption}

\subsection{Interpretation of the state variable}
\label{sec:state_interpretation}

The system \eqref{eq:sde} assigns each agent a state $W^k_t\in\R^d$ evolving as a diffusion, and
every result in this paper --- the critical coupling $\gamma^*(A)$ of \eqref{eq:gamma_star}, the
noise floor \eqref{eq:noise_floor}, and the spectral certificate --- is a statement about that
diffusion. Since the intended application is to ensembles of large generative models, whose
parameter counts run to $10^{9}$--$10^{12}$, we state explicitly what $W^k_t$ is meant to denote
and what the results do not assert.

\paragraph{$W^k_t$ is an effective coordinate.}
The analysis nowhere requires $d$ to be an agent's full parameter count. It requires only that the
governed adaptation of agent $k$ be representable as a diffusion on $\R^d$ with a declared
diffusion coefficient. In practice $W^k_t$ is intended to be one of: the parameters of a low-rank
adapter or head, where continual adaptation is confined; a projection of the parameter vector onto
a subspace fixed at validation time; or a declared vector-valued summary statistic of the agent's
state. All theorems hold verbatim under any of these readings; what changes is the meaning of $d$,
and therefore of every constant that scales with it.

\paragraph{What the literal reading would require.}
Reading $W^k_t$ as an agent's raw parameter vector is not something we defend, for three reasons.

First, Assumption~\ref{ass:loss} sets $\nabla\loss_k(w)=\alpha_{\mathrm{self}}w$, so the loss is
the isotropic quadratic $\tfrac12\alpha_{\mathrm{self}}\norm{w}^2$, minimized at the origin. For a
generative model the origin is not a desirable state but a destroyed one; the mean-reverting term
of \eqref{eq:drift_simple} is therefore best read as a \emph{regularization} force --- weight decay,
or a pull toward a validated reference point after recentering --- and not as descent on a task
loss. The task-loss gradient does not appear in the model at all.

Second, the isotropy is severe. A single scalar $\alpha_{\mathrm{self}}$ stands in for the entire
curvature structure, whereas the loss geometry of a large model is strongly anisotropic. Under an
effective-coordinate reading this is far less demanding: one may choose the coordinate precisely so
that the local curvature is approximately isotropic on it, which is not available in the raw
parameter space.

Third, the constants scale with $d$. The noise floor \eqref{eq:noise_floor} is
$\E_\pi[V]=Kd\sigma_0^2/(4\alpha_{\mathrm{self}})$, linear in $d$. At $d\sim10^{11}$ the absolute
magnitude of this quantity is not meaningful as a risk level; only ratios and thresholds normalized
by $Kd$ retain interpretation. Reported in an effective coordinate of modest dimension, it is
directly interpretable.

\paragraph{What survives regardless of the reading.}
Two of the paper's conclusions are insensitive to how $W^k_t$ is interpreted, because they concern
the \emph{coupling}, not the state. The critical threshold
$\gamma^*(A)=\alpha_{\mathrm{self}}/\abs{\lambda_{\min}(A)}$ depends on $d$ not at all; it is a
statement about the interaction matrix and the self-decay rate. Likewise the topology ordering ---
that stability is governed by the most negative eigenvalue of $A$ rather than by its spectral radius
or spectral gap --- is a property of $A$ alone. These are the results we regard as the paper's
contribution, and they are exactly the ones that do not inherit the scale problem.

\paragraph{Relation to the stated limitations.}
This reading does not dissolve the linearity and homogeneity assumptions discussed in
Section~\ref{sec:limitations}; it relocates them. Under an effective-coordinate reading the
question becomes whether the induced process on the chosen coordinate is approximately linear with
approximately homogeneous decay --- a local and testable question, rather than a global claim about
the loss surface of a large model. We do not test it here, and the numerical studies of
Section~\ref{sec:numerical} simulate the model of Section~\ref{sec:model} directly rather than any
projection of a trained network.

\subsection{Existence and Uniqueness}

Stability is a property of a well-defined process, so the analysis begins by confirming the coupled $K$-agent system has a unique solution to be stable \emph{about}. This guarantee is what licenses speaking of a single joint trajectory at all---and hence of ensemble-level dynamics, rather than $K$ separate agent-by-agent ones.

\begin{proposition}[Well-posedness]
  \label{prop:wellposed}
  Under Assumptions \ref{ass:loss}--\ref{ass:phi}, the SDE
  \eqref{eq:sde}--\eqref{eq:diffusion} admits a unique strong solution
  $\{\W_t\}_{t \geq 0}$ for every initial condition
  $\W_0 \in \R^{dK}$.  The solution is a Markov process with respect
  to the filtration $(\mathcal{F}_t)_{t \geq 0}$.
\end{proposition}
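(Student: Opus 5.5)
Under Assumption~\ref{ass:loss} the drift takes the affine form \eqref{eq:drift_simple}, so we will write the system as a single linear SDE on $\R^{dK}$ and prove well-posedness by the standard Lipschitz/linear-growth theorem. Stacking $\W=(W^1,\dots,W^K)$, the drift is
\[
  \mu(\W) = -M\W + b,\qquad M = (\alpha_{\mathrm{self}} I_K + \gamma A)\otimes I_d,\qquad b = \gamma\,(A\mathbf{1})\otimes\Phi = \gamma\,\mathbf{1}\otimes\Phi,
\]
where the last equality uses row-stochasticity (Assumption~\ref{ass:A}(iii)). The diffusion coefficient is the constant matrix $\sigma_0 I_{dK}$, driven by the $dK$-dimensional Brownian motion $\mathbf{B}=(B^1,\dots,B^K)$. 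Because the $B^k$ are mutually independent standard $d$-dimensional Wiener processes, $\mathbf{B}$ is a standard $dK$-dimensional Wiener process with respect to $(\mathcal{F}_t)$.

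\textbf{Step 1: Lipschitz continuity.} For any $\W,\W'$,
\[
  \norm{\mu(\W)-\mu(\W')} \le \norm{M}_{\mathrm{op}}\,\norm{\W-\W'} \le \bigl(\alpha_{\mathrm{self}}+\gamma\norm{A}_{\mathrm{op}}\bigr)\norm{\W-\W'}.
\]
Here $\norm{A}_{\mathrm{op}}\le\sqrt{\norm{A}_1\norm{A}_\infty}$ is finite; any finite bound suffices. The diffusion coefficient is constant, so its Lipschitz constant is $0$.

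\textbf{Step 2: Linear growth.} We have $\norm{\mu(\W)}\le \norm{M}_{\mathrm{op}}\norm{\W}+\gamma\sqrt{K}\,\Phi_{\max}$, which uses Assumption~\ref{ass:phi}. The diffusion term satisfies $\norm{\sigma_0 I}_F=\sigma_0\sqrt{dK}$.

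\textbf{Step 3: Existence, uniqueness and Markov property.} With globally Lipschitz, time-homogeneous coefficients, the classical It\^o existence and uniqueness theorem (e.g.\ \cite{pham2009continuous}) yields, for each deterministic $\W_0$:
\begin{itemize}[nosep]
  \item a pathwise-unique, continuous, adapted strong solution;
  \item the moment bound $\E\sup_{t\le T}\norm{\W_t}^2<\infty$ for every $T$.
\end{itemize}
The Markov property then follows from the standard argument for SDEs with Lipschitz coefficients. Pathwise uniqueness combined with the flow property $\W_{t+s}=X^{s,\W_s}_{t+s}$, where $X^{s,x}$ is the solution started at time $s$ from $x$, gives
\[
  \E[f(\W_{t+s})\mid\mathcal{F}_s]=P_tf(\W_s).
\]
This holds because $X^{s,x}$ is measurable in $x$ and independent of $\mathcal{F}_s$, the increments of $\mathbf{B}$ after $s$ being independent of $\mathcal{F}_s$. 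Time-homogeneity of the coefficients makes the process time-homogeneous Markov; in fact it is strong Markov.

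\textbf{Optional explicit solution.} We could also exhibit the solution by variation of constants:
\[
  \W_t=e^{-Mt}\W_0+\int_0^t e^{-M(t-s)}b\dif s+\sigma_0\int_0^t e^{-M(t-s)}\dif\mathbf{B}_s.
\]
This would give existence, a Gaussian Ornstein--Uhlenbeck structure and the Markov property directly. The Lipschitz route is cleaner for uniqueness, however.

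\textbf{Main obstacle.} The only mildly delicate point is the Markov property, specifically the measurability of $(x,\omega)\mapsto X^{s,x}_t(\omega)$ needed to condition. We would either cite the standard theorem directly or sidestep it through the explicit formula above, in which the dependence on $\W_s$ is affine and the stochastic integral over $[s,t+s]$ is manifestly independent of $\mathcal{F}_s$.
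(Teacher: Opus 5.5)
Your proposal is correct and follows essentially the same route as the paper. Both check that the affine drift is globally Lipschitz with linear growth and that the diffusion $\sigma_0 I$ is constant, then invoke the classical It\^o existence--uniqueness theorem (the paper cites {\O}ksendal, Theorem~5.2.1) to get the unique strong solution and its Markov property. Your stacked bound $\norm{M}_{\mathrm{op}}\le\alpha_{\mathrm{self}}+\gamma\norm{A}_{\mathrm{op}}$ is slightly cleaner bookkeeping of the same estimate, and the explicit Ornstein--Uhlenbeck formula is a valid optional shortcut for the Markov property.
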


\begin{proof}
  It suffices to verify the standard Lipschitz and linear-growth
  conditions for SDEs; see, e.g., \cite{oksendal2003sde} Theorem~5.2.1.
  The drift $\mu^k$ in \eqref{eq:drift_simple} is affine in
  $\W$, hence globally Lipschitz with constant
  $L = \alpha_{\mathrm{self}} + \gamma \norm{A}_F$.  The diffusion
  $\sigma(W^k) = \sigma_0 I_d$ is constant and hence trivially Lipschitz.
  Linear growth holds since
  $\norm{\mu^k(\W)} \leq (\alpha_{\mathrm{self}} +
  \gamma)\norm{\W} + \gamma \Phi_{\max}$, which is bounded by
  $C(1 + \norm{\W})$ for large enough $C > 0$.  Existence, uniqueness,
  and the Markov property follow from \cite{oksendal2003sde}.
\end{proof}

\subsection{Example Topologies}
\label{ssec:topologies}

We consider three canonical coupling structures.

\paragraph{Complete graph.}  $A_{kj} = 1/(K-1)$ for all $j \neq k$.
Every agent is equally influenced by every other agent.  $A$ is symmetric
(hence normal), so $A_{\mathrm{sym}}=A$ and $\lambda_{\min}(A)=\lambda_{\min}(A_{\mathrm{sym}})$.
The spectrum is $\{1,\,-\tfrac{1}{K-1}\,(\times(K-1))\}$, giving
$\lambda_{\max}(A) = 1$, $\lambda_2(A) = -1/(K-1)$, and the
destabilizing eigenvalue $\lambda_{\min}(A) = -1/(K-1)$ ($=-0.25$ at $K=5$).

\paragraph{Ring graph.}  Agent $k$ is coupled to agents $k-1$ and $k+1$
(modulo $K$), with $A_{k,k\pm 1} = 1/2$.  $A$ is symmetric (normal).  The
eigenvalues are $\cos(2\pi m / K)$ for $m = 0, \ldots, K-1$, giving
$\lambda_{\max}(A) = 1$ and destabilizing eigenvalue $\lambda_{\min}(A) =
\min_m \cos(2\pi m/K)$ ($=\cos(4\pi/5)=-0.809$ at $K=5$); since $A$ is
normal, $\lambda_{\min}(A_{\mathrm{sym}})=\lambda_{\min}(A)$.

\paragraph{Star graph.}  Agent~$0$ is the hub with $A_{0j} = 1/(K-1)$
for $j \geq 1$; spokes have $A_{k0} = 1$ for $k \geq 1$.  Here $A$ is
\emph{non-normal} ($A\neq A^\top$): its eigenvalues are
$\{1,\,0\,(\times(K-2)),\,-1\}$, so $\lambda_{\max}(A)=1$, the second-largest
eigenvalue is $0$, and the destabilizing eigenvalue is
$\lambda_{\min}(A)=-1$.  The symmetric part is different: $A_{\mathrm{sym}}$
has the bordered (arrowhead) structure with nonzero eigenvalues
$\pm\,K/(2\sqrt{K-1})$ and $0$ ($\times(K-2)$), giving
$\lambda_{\min}(A_{\mathrm{sym}}) = -K/(2\sqrt{K-1}) = -1.25$ at $K=5$.
Note $\abs{\lambda_{\min}(A_{\mathrm{sym}})}=1.25 > 1 = \abs{\lambda_{\min}(A)}
= \rho(A)$: the field of values extends beyond the spectrum, so the
quadratic-Lyapunov certificate (which sees $A_{\mathrm{sym}}$) is strictly
more conservative than the exact threshold (which sees $\lambda_{\min}(A)$)
for the star---and a na\"ive spectral-radius bound is unsound here
(Remark~\ref{rem:fov}).

These three topologies represent qualitatively different information
structures: the complete graph maximizes mixing, the ring is a local
interaction network, and the star creates a concentrated dependency on the
hub.  Ordered by the destabilizing eigenvalue $\abs{\lambda_{\min}(A)}$---complete
$0.25 <$ ring $0.809 <$ star $1.0$ at $K=5$---they have \emph{decreasing}
exact critical coupling
$\gamma^*=\alpha_{\mathrm{self}}/\abs{\lambda_{\min}(A)}$
(complete $0.40 >$ ring $0.124 >$ star $0.10$), so the complete graph
tolerates the most coupling and the star the least.

\section{Joint Lyapunov Analysis}
\label{sec:lyapunov}

\subsection{The Candidate Lyapunov Function}

\begin{definition}[Joint quadratic Lyapunov function]
  \label{def:V}
  Define $V : \R^{dK} \to \R_{\geq 0}$ by
  \begin{equation}
    V(\W) = \frac{1}{2} \sum_{k=1}^K \norm{W^k}^2.
    \label{eq:V}
  \end{equation}
\end{definition}

The use of a quadratic Lyapunov function to certify stochastic stability is classical \cite{kushner1967stochastic, khasminskii2012stochastic}. The quantity $V$ aggregates the individual agents' squared norms into a single scalar: it is the natural ensemble-level risk measure, and the entire stability analysis is conducted in terms of it rather than the per-agent norms a single-model validation would track separately. The following records that it is a valid Lyapunov function.

\begin{proposition}[Lyapunov properties]
  \label{prop:lyapunov_props}
  The function $V$ defined in \eqref{eq:V} satisfies all standard
  Lyapunov conditions:
  \begin{enumerate}[label=(\roman*)]
    \item $V \in C^2(\R^{dK})$ (infinitely differentiable);
    \item $V(\mathbf{0}) = 0$;
    \item $V(\W) > 0$ for all $\W \neq \mathbf{0}$;
    \item $V(\W) \to \infty$ as $\norm{\W} \to \infty$ (radially unbounded).
  \end{enumerate}
\end{proposition}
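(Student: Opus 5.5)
The plan is to rewrite $V$ as a scaled squared Euclidean norm on the joint space and then read off each of the four properties. Identify $\W=(W^1,\ldots,W^K)$ with a vector in $\R^{dK}$ by stacking the blocks. With the Euclidean norm on $\R^{dK}$ (the one used implicitly in Proposition~\ref{prop:wellposed}), we have $\sum_k \norm{W^k}^2 = \norm{\W}^2$. Hence
\[
V(\W)=\tfrac12\norm{\W}^2=\tfrac12\,\W^\top I_{dK}\,\W ,
\]
a quadratic form with positive definite matrix $\tfrac12 I_{dK}$. Each item then reduces to an elementary fact about this form.

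For (i), $V$ is a polynomial of degree two in the $dK$ real coordinates, so it is $C^\infty$, and in particular $C^2$. Its derivatives are explicit:
\[
\nabla V(\W)=\W \qquad\text{(block-wise, } \partial V/\partial W^k = W^k\text{)}, \qquad \nabla^2 V = I_{dK}.
\]
These formulas will be reused when computing $\Lgen V$ later, so I record them here.

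For (ii), substituting $\mathbf 0$ gives $V(\mathbf 0)=0$ directly.

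For (iii), if $\W\neq\mathbf 0$ then some block $W^k\neq 0$. Positive definiteness of the Euclidean norm gives $\norm{W^k}^2>0$. Every other term in the sum is nonnegative, so $V(\W)\ge \tfrac12\norm{W^k}^2>0$.

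For (iv), the identity $V(\W)=\tfrac12\norm{\W}^2$ shows that $V$ tends to infinity whenever $\norm{\W}\to\infty$. If a different norm on $\R^{dK}$ were intended, I would invoke equivalence of norms in finite dimensions: there is a $c>0$ with $\norm{\W}_2\ge c\norm{\W}$, so $V\ge \tfrac{c^2}{2}\norm{\W}^2$ and radial unboundedness still holds.

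There is no genuine obstacle in this argument. The only point needing care is to fix the identification of the product space $(\R^d)^K$ with $\R^{dK}$, so that the block sum of squared norms equals the joint squared Euclidean norm. Once that is stated explicitly, all four claims follow in one line each.
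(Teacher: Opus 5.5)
Your proposal is correct and takes the same approach as the paper, which simply notes that $V$ is a positive-definite quadratic form on $\R^{dK}$ (hence $C^\infty$) and calls all four properties immediate. You spell out the same observation item by item via the identity $V(\W)=\tfrac12\norm{\W}^2$.
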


\begin{proof}
  All four properties are immediate from the definition.  $V$ is a
  positive-definite quadratic form in $\R^{dK}$ and hence
  $C^\infty$.
\end{proof}

\subsection{The Infinitesimal Generator}

Let $\Lgen$ denote the infinitesimal generator of the Markov process
$\{\W_t\}$, defined for $f \in C^2(\R^{dK})$ by
\begin{equation}
  \Lgen f(\W) = \lim_{t \downarrow 0} \frac{\E_\W[f(\W_t)] - f(\W)}{t}.
\end{equation}

By It\^{o}'s formula, the generator takes the form
\begin{equation}
  \Lgen f(\W) = \sum_{k=1}^K \left[
    \ip{\mu^k(\W)}{\nabla_{W^k} f(\W)}
    + \frac{\sigma_0^2}{2} \Delta_{W^k} f(\W)
  \right],
  \label{eq:generator_general}
\end{equation}
where $\Delta_{W^k} = \sum_{i=1}^d \partial^2 / \partial (W^k_i)^2$ is
the Laplacian with respect to $W^k$.

The generator is where coupling enters the stability calculus, and therefore where the gap between per-model and ensemble-level analysis first becomes visible in the mathematics. The leading term below, $-2\alpha_{\mathrm{self}}V$, is the per-agent dissipation that a single-model analysis would see; the middle term is the cross-agent coupling that such an analysis omits entirely. The result that follows shows that the moment this coupling is present ($\gamma>0$), the ensemble-wide decay rate obtained by naively summing individual certificates is wrong---the formal reason per-model validation does not deliver an ensemble-level guarantee.

\begin{theorem}[Generator of the joint Lyapunov function]
  \label{thm:generator}
  Under Assumptions \ref{ass:loss}--\ref{ass:phi}, the infinitesimal
  generator of $V$ under the dynamics \eqref{eq:sde}--\eqref{eq:drift_simple} is
  \begin{equation}
    \Lgen V(\W) = -2\alpha_{\mathrm{self}} V(\W) +
    \gamma \sum_{k=1}^K \sum_{j=1}^K A_{kj}
    \ip{W^k}{\Phi - W^j}
    + \frac{1}{2} K d \sigma_0^2.
    \label{eq:LV_exact}
  \end{equation}
\end{theorem}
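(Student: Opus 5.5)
The proof is a direct application of the generator formula \eqref{eq:generator_general} to $f=V$. By Proposition~\ref{prop:lyapunov_props}(i), $V$ is $C^2$, and by Proposition~\ref{prop:wellposed} the process $\{\W_t\}$ is a well-defined Markov diffusion. Itô's formula therefore applies, and $\Lgen V$ is given by \eqref{eq:generator_general} with the drift \eqref{eq:drift_simple} substituted. The plan is to compute the two derivative ingredients of $V$, plug them in, and then separate the resulting drift contribution into its self-decay part and its coupling part.

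\textbf{Derivatives of $V$.} Since $V(\W)=\tfrac12\sum_k\norm{W^k}^2$ is separable across agents, $\nabla_{W^k}V(\W)=W^k$. The Hessian block in $W^k$ is $I_d$, so $\Delta_{W^k}V=d$ for each $k$. The diffusion part of \eqref{eq:generator_general} is then
\[
\sum_{k=1}^K \frac{\sigma_0^2}{2}\,d=\tfrac12 Kd\sigma_0^2 .
\]
The diffusion is constant, $\sigma(W^k)=\sigma_0 I_d$, and the Brownian motions $B^k$ are independent. Hence there are no mixed second-order terms between different agents, and no trace term beyond $\sigma_0^2 d$ per agent. I would state this explicitly to justify the block-diagonal form of \eqref{eq:generator_general}.

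\textbf{Drift part.} Substituting \eqref{eq:drift_simple} gives
\[
\sum_k \ip{\mu^k(\W)}{W^k}=\sum_k\Big(-\alpha_{\mathrm{self}}\norm{W^k}^2+\gamma\sum_j A_{kj}\ip{\Phi-W^j}{W^k}\Big).
\]
The first sum equals $-2\alpha_{\mathrm{self}}V(\W)$ by \eqref{eq:V}. The second is exactly the double sum in \eqref{eq:LV_exact}, by symmetry of the inner product. Adding the diffusion term yields \eqref{eq:LV_exact}.

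\textbf{Main obstacle.} The computation itself is routine. The one point requiring care is justifying that the limit definition of $\Lgen$ agrees with the Itô/Dynkin differential operator for the unbounded function $V$, which is not compactly supported. I would handle this in either of two ways. One option is to read $\Lgen$ in \eqref{eq:generator_general} as the extended (Itô) generator. The other is to verify directly that $\E_\W[V(\W_t)]-V(\W)=\E_\W\int_0^t\Lgen V(\W_s)\,ds$. For that identity, the stochastic integral $\int \sigma_0\ip{W^k_s}{dB^k_s}$ is a true martingale, because affine drift and constant diffusion give finite second moments of $\W_s$ on bounded intervals. Dividing by $t$, letting $t\downarrow0$, and using continuity of paths together with dominated convergence then recovers the pointwise formula.
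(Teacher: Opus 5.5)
Your proposal is correct and follows the paper's proof essentially step for step: compute $\nabla_{W^k}V=W^k$ and $\Delta_{W^k}V=d$, substitute the drift to split off $-2\alpha_{\mathrm{self}}V$ from the coupling double sum, and add the diffusion contribution $\tfrac12 Kd\sigma_0^2$. Your extra step showing that the limit-defined generator agrees with the It\^o operator for the unbounded $V$ is not a different route. It supplies, via Dynkin's formula with a true martingale, rigor that the paper leaves implicit.
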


\begin{proof}
  We compute each component of \eqref{eq:generator_general} in turn.

  \textbf{Step 1: Partial derivatives of $V$.}
  Since $V(\W) = \tfrac{1}{2}\sum_{k=1}^K \norm{W^k}^2$, we have
  \[
    \frac{\partial V}{\partial W^k} = W^k \in \R^d,
    \qquad
    \frac{\partial^2 V}{\partial (W^k_i)^2} = 1
    \;\text{ for all } i = 1,\ldots,d,\; k = 1,\ldots,K.
  \]

  \textbf{Step 2: Drift contribution.}
  \begin{align*}
    \sum_{k=1}^K \ip{\mu^k(\W)}{W^k}
    &= \sum_{k=1}^K \ip{-\alpha_{\mathrm{self}} W^k
      + \gamma \textstyle\sum_j A_{kj}(\Phi - W^j)}{W^k} \\
    &= -\alpha_{\mathrm{self}} \sum_{k=1}^K \norm{W^k}^2
      + \gamma \sum_{k=1}^K \sum_{j=1}^K A_{kj}
        \ip{\Phi - W^j}{W^k} \\
    &= -2\alpha_{\mathrm{self}} V(\W)
      + \gamma \sum_{k,j} A_{kj} \ip{W^k}{\Phi - W^j}.
  \end{align*}

  \textbf{Step 3: Diffusion contribution.}
  Since $\sigma(W^k) = \sigma_0 I_d$ and
  $\partial^2 V / \partial (W^k_i)^2 = 1$ for all $i, k$,
  \[
    \sum_{k=1}^K \frac{\sigma_0^2}{2} \Delta_{W^k} V(\W)
    = \sum_{k=1}^K \frac{\sigma_0^2}{2} \cdot d
    = \frac{1}{2} K d \sigma_0^2.
  \]

  \textbf{Step 4: Assembly.}
  Combining Steps~2 and 3 in \eqref{eq:generator_general} gives
  \eqref{eq:LV_exact}.
\end{proof}

\subsection{An Upper Bound on the Generator}

The exact formula \eqref{eq:LV_exact} contains the cross-term
$\gamma \sum_{k,j} A_{kj}\ip{W^k}{\Phi - W^j}$, which can be either
positive or negative.  We derive an upper bound that makes the
sign structure transparent.

That sign is the whole question for stability: a coupling term that is reliably negative is dissipative and harmless, whereas one that can turn positive is a channel through which interconnection injects energy into the ensemble rather than removing it. The bound below resolves the sign through the symmetric part $A_{\mathrm{sym}}$, isolating the eigenvalue that will prove to govern ensemble instability.

\begin{lemma}[Coupling term bound]
  \label{lem:coupling_bound}
  Let $A_{\mathrm{sym}} := \tfrac{1}{2}(A + A^\top)$ and let
  $\lambda_{\min}(A_{\mathrm{sym}})$ denote its smallest eigenvalue.
  Under Assumptions \ref{ass:A} and \ref{ass:phi},
  \begin{equation}
    \gamma \sum_{k=1}^K \sum_{j=1}^K A_{kj} \ip{W^k}{\Phi - W^j}
    \;\leq\;
    \gamma \Phi_{\max} \sqrt{K} \norm{\W}
    - \gamma\, \lambda_{\min}(A_{\mathrm{sym}})\, \norm{\W}^2.
    \label{eq:coupling_bound}
  \end{equation}
  When $\lambda_{\min}(A_{\mathrm{sym}}) < 0$ the second term is
  \emph{positive}, equal to
  $\gamma\abs{\lambda_{\min}(A_{\mathrm{sym}})}\,\norm{\W}^2$, and is the
  destabilizing contribution of the coupling.
\end{lemma}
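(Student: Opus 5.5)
Split the coupling term into its $\Phi$-part and its $W$--$W$ part:
\[
\gamma \sum_{k,j} A_{kj}\ip{W^k}{\Phi - W^j}
= \gamma \sum_{k}\Bigl(\textstyle\sum_j A_{kj}\Bigr)\ip{W^k}{\Phi}
\;-\; \gamma \sum_{k,j} A_{kj}\ip{W^k}{W^j}.
\]
We bound the two pieces separately.

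\textbf{The $\Phi$-part.} By row-stochasticity (Assumption~\ref{ass:A}(iii)), the first sum collapses to $\gamma\sum_k \ip{W^k}{\Phi}$. Cauchy--Schwarz in $\R^d$ together with Assumption~\ref{ass:phi} gives
\[
\gamma\sum_k \ip{W^k}{\Phi} \;\le\; \gamma\,\Phi_{\max}\sum_k \norm{W^k}.
\]
Cauchy--Schwarz in $\R^K$ then gives $\sum_k\norm{W^k} \le \sqrt{K}\,\norm{\W}$, which produces the first term of \eqref{eq:coupling_bound}.

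\textbf{The $W$--$W$ part.} Write $\sum_{k,j}A_{kj}\ip{W^k}{W^j} = \Tr(\W^\top (A\otimes\text{id})\W)$. In coordinates, with $X\in\R^{K\times d}$ the matrix whose $k$-th row is $W^k$, this is $\Tr(X^\top A X) = \sum_{i=1}^d x_i^\top A x_i$, where $x_i\in\R^K$ is the $i$-th column of $X$. A quadratic form only sees the symmetric part, so $x_i^\top A x_i = x_i^\top A_{\mathrm{sym}} x_i$. By the Rayleigh quotient bound for the real symmetric matrix $A_{\mathrm{sym}}$,
\[
x_i^\top A_{\mathrm{sym}} x_i \;\ge\; \lambda_{\min}(A_{\mathrm{sym}})\norm{x_i}^2 .
\]
Summing over $i$ and using $\sum_i\norm{x_i}^2 = \norm{\W}^2$ gives
\[
-\gamma\sum_{k,j}A_{kj}\ip{W^k}{W^j} \;\le\; -\gamma\,\lambda_{\min}(A_{\mathrm{sym}})\norm{\W}^2,
\]
using $\gamma\ge0$. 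Adding the two bounds yields \eqref{eq:coupling_bound}. The final remark in the statement is then immediate: when $\lambda_{\min}(A_{\mathrm{sym}})<0$, the quantity $-\gamma\lambda_{\min}(A_{\mathrm{sym}})\norm{\W}^2$ equals $\gamma\abs{\lambda_{\min}(A_{\mathrm{sym}})}\norm{\W}^2\ge 0$.

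\textbf{Main subtlety.} The only non-routine step is the second piece. Because $A$ is non-symmetric (the star topology, for instance), its own spectrum cannot be used. One must pass to $A_{\mathrm{sym}}$ by observing that only the symmetric part of $A$ contributes to the quadratic form, and one must decompose the $\R^{dK}$ form coordinatewise into $d$ copies of a $K$-dimensional form. I would make this Kronecker/column decomposition explicit so that the eigenvalue bound is applied to a genuine symmetric $K\times K$ matrix.
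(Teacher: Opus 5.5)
Your proof is correct and matches the paper's argument: the same row-stochastic split, the same two-stage Cauchy--Schwarz bound on the $\Phi$-part, and the same symmetrize-then-Rayleigh bound on the quadratic part (using $\gamma\ge0$ in both, as the paper does implicitly). Your column-wise decomposition $\sum_{i=1}^d x_i^\top A_{\mathrm{sym}} x_i$ is the explicit form of the paper's remark that $A_{\mathrm{sym}}\otimes I_d$ has spectrum $\spec(A_{\mathrm{sym}})$, each eigenvalue with multiplicity $d$.
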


\begin{proof}
  Split the coupling term using the row-stochastic property
  $\sum_j A_{kj}=1$:
  \[
    \gamma \sum_{k,j} A_{kj} \ip{W^k}{\Phi - W^j}
    = \gamma \sum_{k} \ip{W^k}{\Phi}
      - \gamma \sum_{k,j} A_{kj} \ip{W^k}{W^j}.
  \]
  The first sum is bounded by $\gamma \Phi_{\max} \sqrt{K} \norm{\W}$
  by Cauchy--Schwarz in $\R^d$ and then in $\R^K$, exactly as before.
  For the quadratic sum, write $\mathbf{w} = \mathrm{vec}(\W) \in \R^{dK}$.
  A quadratic form depends only on the symmetric part of its matrix, so
  \[
    \gamma \sum_{k,j} A_{kj} \ip{W^k}{W^j}
    = \gamma\, \mathbf{w}^\top (A \otimes I_d) \mathbf{w}
    = \gamma\, \mathbf{w}^\top (A_{\mathrm{sym}} \otimes I_d) \mathbf{w}
    \;\geq\; \gamma\, \lambda_{\min}(A_{\mathrm{sym}})\, \norm{\mathbf{w}}^2,
  \]
  where the last step is the Rayleigh bound for the \emph{symmetric}
  matrix $A_{\mathrm{sym}} \otimes I_d$, whose spectrum is
  $\spec(A_{\mathrm{sym}})$ (each eigenvalue with multiplicity $d$).
  Negating and combining gives \eqref{eq:coupling_bound}.
\end{proof}

\begin{remark}[Why the spectral radius is the wrong bound]
  \label{rem:fov}
  An earlier form of this lemma bounded the quadratic term by
  $\mathbf{w}^\top(A\otimes I_d)\mathbf{w}\geq-\lambda_{\max}(A)\norm{\mathbf{w}}^2
  =-\rho(A)\norm{\mathbf{w}}^2$, using the spectral radius. This is
  \emph{false} for non-normal $A$. The sharp lower bound is the smallest
  eigenvalue of the symmetric part---the left edge of the numerical range
  (field of values) of $A\otimes I_d$---and for a non-normal matrix the
  field of values strictly contains the interval
  $[\lambda_{\min}(A),\lambda_{\max}(A)]$. The star topology is a concrete
  failure: $\rho(A)=1$ but $\lambda_{\min}(A_{\mathrm{sym}})=-1.25$ at
  $K=5$ (Section~\ref{ssec:topologies}). The discarded bound understates
  the destabilizing term by $25\%$ and would certify as stable a range of
  $\gamma$ that is in fact unstable---a false safety certificate, which is
  the worst possible failure mode for a compliance tool.
\end{remark}

\begin{corollary}[Generator upper bound]
  \label{cor:gen_upper}
  For any $\varepsilon > 0$, with $A_{\mathrm{sym}}=\tfrac12(A+A^\top)$,
  \begin{equation}
    \Lgen V(\W) \;\leq\;
    -\bigl(2\alpha_{\mathrm{self}} + 2\gamma\, \lambda_{\min}(A_{\mathrm{sym}})
    - \varepsilon\bigr)\, V(\W) + C_\varepsilon,
    \label{eq:gen_upper}
  \end{equation}
  where $C_\varepsilon = \tfrac{1}{2}Kd\sigma_0^2
  + \gamma^2 \Phi_{\max}^2 K / (4\varepsilon)$.  The coupling
  \emph{reduces} the effective decay rate precisely when
  $\lambda_{\min}(A_{\mathrm{sym}})<0$, contributing
  $-2\gamma\abs{\lambda_{\min}(A_{\mathrm{sym}})}\,V$.
\end{corollary}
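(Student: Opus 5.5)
The plan is to combine the exact generator formula of Theorem~\ref{thm:generator} with the coupling bound of Lemma~\ref{lem:coupling_bound}. The remaining linear term $\gamma\Phi_{\max}\sqrt{K}\norm{\W}$ would then be absorbed into a small multiple of $V$ plus a constant, using Young's inequality. Throughout I use the identity $\norm{\W}^2=\norm{\mathbf{w}}^2=2V(\W)$, which follows from Definition~\ref{def:V}.

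\textbf{Step 1 (quadratic part).} Substituting \eqref{eq:coupling_bound} into \eqref{eq:LV_exact} gives
\[
\Lgen V(\W)\le -2\alpha_{\mathrm{self}}V(\W)+\gamma\Phi_{\max}\sqrt{K}\norm{\W}-\gamma\lambda_{\min}(A_{\mathrm{sym}})\norm{\W}^2+\tfrac12 Kd\sigma_0^2 .
\]
Since $\norm{\W}^2=2V$, the quadratic coupling term equals $-2\gamma\lambda_{\min}(A_{\mathrm{sym}})V$. Combined with the self-decay term, this already yields the coefficient $-(2\alpha_{\mathrm{self}}+2\gamma\lambda_{\min}(A_{\mathrm{sym}}))V$. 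This step also gives the interpretive sentence of the corollary: when $\lambda_{\min}(A_{\mathrm{sym}})<0$, the coupling contributes exactly $+2\gamma\abs{\lambda_{\min}(A_{\mathrm{sym}})}V$. This is a pure substitution and needs no further argument.

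\textbf{Step 2 (linear part, the only delicate step).} Write $s=\norm{\W}$ and $a=\gamma\Phi_{\max}\sqrt{K}$. I will absorb $as$ as $\varepsilon V+C'$, taking the optimal constant
\[
C'=\sup_{s\ge0}\bigl(as-\tfrac{\varepsilon}{2}s^2\bigr)=\frac{a^2}{2\varepsilon}=\frac{\gamma^2\Phi_{\max}^2K}{2\varepsilon}.
\]
This is the main point of care. The bookkeeping between $\norm{\W}^2$ and $V$ introduces a factor of $2$. As a result, the constant $\gamma^2\Phi_{\max}^2K/(4\varepsilon)$ written in the statement is what one obtains from $as\le \varepsilon s^2+a^2/(4\varepsilon)=2\varepsilon V+a^2/(4\varepsilon)$, which gives slack $2\varepsilon$ rather than $\varepsilon$ on $V$.

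\textbf{Step 3 (assembly).} Adding Steps 1 and 2 gives \eqref{eq:gen_upper}. I would state it either with $C_\varepsilon=\tfrac12Kd\sigma_0^2+\gamma^2\Phi_{\max}^2K/(2\varepsilon)$, or equivalently with the printed constant and $\varepsilon$ replaced by $2\varepsilon$ in the decay coefficient. Because $\varepsilon>0$ is arbitrary, the two forms are reparametrizations of each other. Every downstream use (the decay rate $2\alpha_{\mathrm{self}}+2\gamma\lambda_{\min}(A_{\mathrm{sym}})-\varepsilon$ being positive for small $\varepsilon$ whenever $\alpha_{\mathrm{self}}I+\gamma A_{\mathrm{sym}}\succ0$) is unaffected.

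The expected obstacle is therefore not conceptual but the constant in Step 2. I would first check whether the Cauchy--Schwarz bound of Lemma~\ref{lem:coupling_bound} can be tightened jointly with the quadratic term to recover the printed $1/(4\varepsilon)$. It is attained (e.g.\ all $W^k$ parallel to $\Phi$ with $\norm{\Phi}=\Phi_{\max}$), so I expect this tightening to fail and would instead record the corrected constant.
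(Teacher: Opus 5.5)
Your argument is correct and is exactly the paper's proof (substitute Lemma~\ref{lem:coupling_bound} into \eqref{eq:LV_exact}, use $\norm{\W}^2=2V$, then apply Young's inequality to the linear term). Your diagnosis of the constant is also right. The paper obtains slack $2\varepsilon$ with constant $\gamma^2\Phi_{\max}^2K/(4\varepsilon)$, then ``replaces $2\varepsilon$ by $\varepsilon$'' without doubling the constant, so $C_\varepsilon$ should contain $\gamma^2\Phi_{\max}^2K/(2\varepsilon)$. The explicit $\beta$ of Theorem~\ref{thm:stability} inherits the same slip, so your remark that every downstream use is unaffected holds only qualitatively.

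One refinement of your last remark: Cauchy--Schwarz being attained at the consensus configuration does not by itself refute the printed constant, because the Rayleigh bound is not attained there. For symmetric $A$ the sharp constant is $\gamma^2\Phi_{\max}^2K/\bigl(4\gamma(1-\lambda_{\min}(A))+2\varepsilon\bigr)$, so the printed one actually survives for $\varepsilon\le 2\gamma(1-\lambda_{\min}(A))$. It does fail for larger $\varepsilon$: take the complete graph with $K=5$, $\gamma=0.1$, $\varepsilon=1$, and $W^k=0.08\,\Phi$ for all $k$ with $\norm{\Phi}=\Phi_{\max}$. So the statement as printed is false, and your correction is genuinely needed.
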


\begin{proof}
  By Young's inequality $ab \leq \varepsilon a^2 + b^2/(4\varepsilon)$
  applied to the $\Phi$-term in Lemma~\ref{lem:coupling_bound}:
  \[
    \gamma \Phi_{\max} \sqrt{K} \norm{\W}
    \leq \varepsilon \norm{\W}^2 + \frac{\gamma^2 \Phi_{\max}^2 K}{4\varepsilon}
    = 2\varepsilon V(\W) + \frac{\gamma^2 \Phi_{\max}^2 K}{4\varepsilon}.
  \]
  Substituting into Lemma~\ref{lem:coupling_bound} and using
  $\norm{\W}^2 = 2V(\W)$:
  \[
    \Lgen V(\W) \leq
    -\bigl(2\alpha_{\mathrm{self}} + 2\gamma \lambda_{\min}(A_{\mathrm{sym}})
    - 2\varepsilon\bigr) V(\W)
    + \tfrac{1}{2}Kd\sigma_0^2 + \frac{\gamma^2 \Phi_{\max}^2 K}{4\varepsilon}.
  \]
  Replacing $2\varepsilon$ by $\varepsilon$ and absorbing the
  $\varepsilon$-dependent constant into $C_\varepsilon$ gives
  \eqref{eq:gen_upper}.
\end{proof}

\subsection{The Stability Theorem}

The certificate below is the operational core for the validation function: a single, checkable inequality on the coupling strength that guarantees the \emph{joint} system is stable---ultimately bounded, with a unique stationary law. Because it is expressed through the symmetric part $A_{\mathrm{sym}}$ of the interaction matrix, it is conservative for non-normal topologies and never certifies an unstable ensemble, which is exactly the property an ensemble-level pass/fail criterion should have.

\begin{theorem}[Joint Lyapunov stability --- sufficient certificate]
  \label{thm:stability}
  Suppose that the coupling strength satisfies
  \begin{equation}
    \gamma < \gamma_{\mathrm{cert}}(A) := \frac{\alpha_{\mathrm{self}}}
    {\abs{\lambda_{\min}(A_{\mathrm{sym}})}},
    \qquad A_{\mathrm{sym}}=\tfrac12(A+A^\top).
    \label{eq:gamma_star}
  \end{equation}
  (This is a \emph{sufficient} condition; $\gamma_{\mathrm{cert}}(A)\le\gamma^*(A)$,
  the exact threshold of Theorem~\ref{thm:phase}, with equality iff $A$ is
  normal.)  Then there exist constants $\alpha_{\mathrm{eff}} > 0$ and
  $\beta > 0$ such that
  \begin{equation}
    \Lgen V(\W) \;\leq\; -\alpha_{\mathrm{eff}} V(\W) + \beta
    \quad \text{for all } \W \in \R^{dK},
    \label{eq:foster_lyapunov}
  \end{equation}
  with
  \begin{align}
    \alpha_{\mathrm{eff}} &= 2\bigl(\alpha_{\mathrm{self}}
      - \gamma \abs{\lambda_{\min}(A_{\mathrm{sym}})}\bigr) - \varepsilon_0 > 0, \\
    \beta &= \frac{1}{2}Kd\sigma_0^2
      + \frac{\gamma^2 \Phi_{\max}^2 K}{4\varepsilon_0},
  \end{align}
  for any $0 < \varepsilon_0 <
  2\bigl(\alpha_{\mathrm{self}} - \gamma \abs{\lambda_{\min}(A_{\mathrm{sym}})}\bigr)$.
  (If $A_{\mathrm{sym}}\succeq 0$, i.e.\ $\lambda_{\min}(A_{\mathrm{sym}})\geq0$,
  the coupling is purely stabilizing and $\gamma_{\mathrm{cert}}(A)=+\infty$.)

  Furthermore, the process $\{\W_t\}$ has a unique stationary
  distribution $\pi$ on $\R^{dK}$, and
  \begin{equation}
    \E_\pi[V(\W)] \leq \frac{\beta}{\alpha_{\mathrm{eff}}}.
    \label{eq:stationary_bound}
  \end{equation}
\end{theorem}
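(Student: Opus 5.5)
The plan has three parts: a Foster--Lyapunov inequality from Corollary~\ref{cor:gen_upper}, a stationary moment bound via It\^o and Gr\"onwall, and existence and uniqueness of $\pi$ from the Gaussian structure of the linear SDE.

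\textbf{Step 1: the drift inequality.} I would take Corollary~\ref{cor:gen_upper} with $\varepsilon=\varepsilon_0$. Set $\delta := \alpha_{\mathrm{self}}-\gamma\abs{\lambda_{\min}(A_{\mathrm{sym}})}$, which is positive by \eqref{eq:gamma_star}.

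If $\lambda_{\min}(A_{\mathrm{sym}})<0$, the coefficient in \eqref{eq:gen_upper} is
\[
2\alpha_{\mathrm{self}}+2\gamma\lambda_{\min}(A_{\mathrm{sym}})-\varepsilon_0 = 2\delta-\varepsilon_0 = \alpha_{\mathrm{eff}},
\]
and this is positive for any $0<\varepsilon_0<2\delta$. The constant $C_{\varepsilon_0}$ is exactly $\beta$, which gives \eqref{eq:foster_lyapunov}.

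If $\lambda_{\min}(A_{\mathrm{sym}})\ge0$, the coupling term only helps. I would drop it using $-2\gamma\lambda_{\min}V\le 0$. This yields the same inequality with rate at least $2\alpha_{\mathrm{self}}-\varepsilon_0$, matching the stated formula once $\abs{\lambda_{\min}}$ is read accordingly.

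\textbf{Step 2: the moment bound.} Apply It\^o's formula to $e^{\alpha_{\mathrm{eff}}t}V(\W_t)$. The stochastic integral $\int \sigma_0 e^{\alpha_{\mathrm{eff}}s}\ip{\W_s}{\dif B_s}$ is a true martingale, because the linear SDE has finite second moments on compact time intervals. An alternative is to localize with stopping times $\tau_R=\inf\{t:\norm{\W_t}\ge R\}$ and use Fatou. Either way, Gr\"onwall gives
\[
\E[V(\W_t)]\le e^{-\alpha_{\mathrm{eff}}t}V(\W_0)+\frac{\beta}{\alpha_{\mathrm{eff}}}\bigl(1-e^{-\alpha_{\mathrm{eff}}t}\bigr).
\]
This bounds the moments uniformly in time, and therefore the laws of $\W_t$ are tight.

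\textbf{Step 3: existence and uniqueness of $\pi$.} This is the only real obstacle. Existence follows from Krylov--Bogoliubov applied to the time averages of the laws, together with the Feller property, which holds because the coefficients are Lipschitz by Proposition~\ref{prop:wellposed}.

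For uniqueness I would use the linear structure rather than general Harris theory. Write $\mathbf{w}=\mathrm{vec}(\W)$, so the SDE reads
\[
\dif \mathbf{w}=(-M\mathbf{w}+b)\dif t+\sigma_0\dif \mathbf{B},\qquad M=(\alpha_{\mathrm{self}}I+\gamma A)\otimes I_d .
\]
\begin{itemize}
\item For any $\mathbf{x}$, $\mathbf{x}^\top M\mathbf{x}\ge(\alpha_{\mathrm{self}}+\gamma\lambda_{\min}(A_{\mathrm{sym}}))\norm{\mathbf{x}}^2>0$, so the symmetric part of $M$ is positive definite.
\item Hence $\norm{e^{-Mt}}\le e^{-\delta t}$, and in particular every eigenvalue of $M$ has positive real part.
\item Two solutions driven by the same Brownian motion differ by $e^{-Mt}(\W_0-\W_0')$, which contracts deterministically to zero.
\end{itemize}
Therefore any two stationary laws coincide: couple their initial conditions and let $t\to\infty$. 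In fact $\pi$ is the Gaussian $N(M^{-1}b,\Sigma)$, where $\Sigma$ solves $M\Sigma+\Sigma M^\top=\sigma_0^2I$. Non-degeneracy of the noise also gives strong Feller and irreducibility, so Harris theory would work as a fallback.

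\textbf{Step 4: the stationary bound.} Start the process from $\pi$ in Step 2's inequality. Then $\E_\pi[V]$ is constant in $t$, so
\[
\E_\pi[V]\le e^{-\alpha_{\mathrm{eff}}t}\,\E_\pi[V]+\frac{\beta}{\alpha_{\mathrm{eff}}}.
\]
This requires $\E_\pi[V]<\infty$ first. That holds because $\pi$ is Gaussian; alternatively, start from a point mass, take the weak limit of $\E[V\wedge N]$ along the convergent laws, and then let $N\to\infty$. Letting $t\to\infty$ gives \eqref{eq:stationary_bound}.
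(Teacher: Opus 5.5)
\textbf{The constant in Step~1 is wrong, and with it the stated bounds are false.} Your Step~1, like Part~1 of the paper's own proof, takes Corollary~\ref{cor:gen_upper} at face value when you say $C_{\varepsilon_0}$ is exactly $\beta$. That corollary's constant is off by a factor of $2$; the error is inherited rather than introduced by you, but it is real.

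Young's inequality $ab\le\eta a^2+b^2/(4\eta)$ with $a=\norm{\W}$ adds $\eta\norm{\W}^2=2\eta V$. Giving up $\varepsilon_0$ of decay rate therefore forces $\eta=\varepsilon_0/2$ and a constant $\gamma^2\Phi_{\max}^2K/(2\varepsilon_0)$, not $\gamma^2\Phi_{\max}^2K/(4\varepsilon_0)$. The corollary's proof replaces $2\varepsilon$ by $\varepsilon$ in the rate without rescaling the constant.

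With the stated $\beta$, \eqref{eq:foster_lyapunov} is genuinely false. Write $\lambda:=\lambda_{\min}(A_{\mathrm{sym}})$. It is always negative under Assumption~\ref{ass:A}, because $\Tr A_{\mathrm{sym}}=0$ while $\mathbf 1_K^\top A_{\mathrm{sym}}\mathbf 1_K=K>0$, so your second case in Step~1 is vacuous. For $W^1=\cdots=W^K=t\Phi$ with $\norm{\Phi}=\Phi_{\max}$, formula \eqref{eq:LV_exact} gives
\[
  \Lgen V(\W)+\alpha_{\mathrm{eff}}V(\W)-\beta
  =K\Phi_{\max}^2\Bigl[\gamma t-\bigl(\gamma(1+\abs{\lambda})+\tfrac{\varepsilon_0}{2}\bigr)t^2-\frac{\gamma^2}{4\varepsilon_0}\Bigr].
\]
Its maximum over $t$ is positive as soon as $\varepsilon_0>2\gamma(1+\abs{\lambda})$. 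This is compatible with your constraint $\varepsilon_0<2\delta$ whenever $\gamma(1+2\abs{\lambda})<\alpha_{\mathrm{self}}$. For example, the complete graph with $K=5$, $\alpha_{\mathrm{self}}=0.1$, $\gamma=0.01$, $\varepsilon_0=0.1$, $\Phi_{\max}=0.3$, $t=0.08$ gives $+6.75\times10^{-5}$.

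Even \eqref{eq:stationary_bound} fails as stated. For every row-stochastic $A$ one has $\E_\pi[\mathbf w]=\frac{\gamma}{\alpha_{\mathrm{self}}+\gamma}\,\mathbf 1_K\otimes\Phi$, so $\E_\pi[V]\ge K\gamma^2\Phi_{\max}^2/\bigl(2(\alpha_{\mathrm{self}}+\gamma)^2\bigr)$. At $\varepsilon_0=\delta$ the claimed bound is $Kd\sigma_0^2/(2\delta)+K\gamma^2\Phi_{\max}^2/(4\delta^2)$, which is smaller once $\alpha_{\mathrm{self}}+\gamma<\sqrt2\,\delta$ and $\sigma_0$ is small. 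With the same complete-graph data and $\varepsilon_0=\delta$, $d=4$, $\sigma_0=10^{-3}$, one gets $\E_\pi[V]\approx1.9\times10^{-3}$ against a claimed bound of about $1.3\times10^{-3}$.

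The repair is $\beta=\tfrac12Kd\sigma_0^2+\gamma^2\Phi_{\max}^2K/(2\varepsilon_0)$. With it your Step~1 is correct and nothing downstream changes.

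\textbf{Steps~2--4 take a different, correct route.} The paper gets existence and uniqueness of $\pi$ by citing the Meyn--Tweedie criterion (drift condition plus uniform ellipticity). It then obtains \eqref{eq:stationary_bound} in one line by integrating \eqref{eq:foster_lyapunov} against $\pi$ using $\E_\pi[\Lgen V]=0$. You instead:
\begin{enumerate}
  \item integrate the drift inequality in time (the integrating-factor computation the paper performs only later, for Theorem~\ref{thm:as_convergence});
  \item get existence from tightness and Krylov--Bogoliubov;
  \item get uniqueness from synchronous coupling and the contraction $\norm{e^{-Mt}}\le e^{-\delta t}$ that the certificate itself supplies;
  \item read off the stationary bound by starting from $\pi$ after checking $\E_\pi[V]<\infty$.
\end{enumerate}
The paper's route is shorter and never uses linearity, so it would survive a nonlinear drift satisfying the same Foster--Lyapunov inequality. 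However, $\E_\pi[\Lgen V]=0$ is simply asserted for the unbounded function $V$, where it needs exactly the integrability and localization you provide. Your route is self-contained, identifies $\pi=\mathcal N(M^{-1}b,\Sigma)$ explicitly, and its uniqueness step does not need non-degenerate noise. The price is reliance on the affine structure. Once you use that structure, Krylov--Bogoliubov is redundant, since invariance of this Gaussian can be verified directly.

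\textbf{The parenthetical comparison with $\gamma^*(A)$ is not addressed.} Neither your proposal nor the paper's proof deals with it. The inequality $\gamma_{\mathrm{cert}}(A)\le\gamma^*(A)$ follows from Bendixson's bound $\mathrm{Re}\,\nu\ge\lambda_{\min}(A_{\mathrm{sym}})$ for $\nu\in\spec(A)$. But only the \emph{if} direction of the claimed equivalence with normality is true. Take the direct sum of the $2\times2$ swap matrix with the admissible $3\times3$ matrix whose rows are $(0,\tfrac12,\tfrac12)$, $(\tfrac12,0,\tfrac12)$, $(1,0,0)$. This matrix is non-normal, yet it has $\lambda_{\min}(A)=\lambda_{\min}(A_{\mathrm{sym}})=-1$.
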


\begin{proof}
  \textbf{Part 1: Foster--Lyapunov condition.}
  By Corollary~\ref{cor:gen_upper} with $\varepsilon = \varepsilon_0$,
  \[
    \Lgen V(\W) \leq
    -\bigl(2\alpha_{\mathrm{self}} + 2\gamma \lambda_{\min}(A_{\mathrm{sym}})
    - \varepsilon_0\bigr) V(\W) + C_{\varepsilon_0}.
  \]
  Setting $\alpha_{\mathrm{eff}} = 2\alpha_{\mathrm{self}}
  + 2\gamma\lambda_{\min}(A_{\mathrm{sym}}) - \varepsilon_0
  = 2(\alpha_{\mathrm{self}} - \gamma\abs{\lambda_{\min}(A_{\mathrm{sym}})}) - \varepsilon_0$ and
  $\beta = C_{\varepsilon_0}$, and noting that $\alpha_{\mathrm{eff}} > 0$
  by hypothesis \eqref{eq:gamma_star}, we obtain
  \eqref{eq:foster_lyapunov}.

  \textbf{Part 2: Existence of a stationary distribution.}
  The process $\{\W_t\}$ is a non-degenerate diffusion on $\R^{dK}$
  (the diffusion coefficient $\sigma_0 I_{dK}$ is uniformly elliptic).
  Combined with the Foster--Lyapunov condition
  \eqref{eq:foster_lyapunov} and the Lyapunov properties
  (Proposition~\ref{prop:lyapunov_props}), the Meyn--Tweedie criterion
  \cite{meyn1993stability} guarantees the existence of a unique
  stationary distribution $\pi$.

  \textbf{Part 3: Stationary moment bound.}
  Under the stationary distribution, $\E_\pi[\Lgen V] = 0$ (the
  process is in equilibrium).  Taking expectations on both sides of
  \eqref{eq:foster_lyapunov}:
  \[
    0 = \E_\pi[\Lgen V(\W)] \leq
    -\alpha_{\mathrm{eff}} \E_\pi[V(\W)] + \beta,
  \]
  which rearranges to $\E_\pi[V(\W)] \leq \beta / \alpha_{\mathrm{eff}}$.
\end{proof}

\subsection{The Noise-Floor Theorem}

In the decoupled case ($\gamma = 0$), the exact stationary distribution
is available in closed form.

The resulting floor is the irreducible level of fluctuation the ensemble carries from learning noise alone, with no coupling at all; it is the baseline against which the \emph{coupling-induced} excess---the genuinely emergent component of co-movement---is measured in the phase-transition analysis that follows.

\begin{theorem}[Noise floor]
  \label{thm:noise_floor}
  Suppose $\gamma = 0$ and $\Phi = 0$.  Then the SDE
  \eqref{eq:sde}--\eqref{eq:drift_simple} decomposes into $Kd$
  independent scalar Ornstein--Uhlenbeck (OU) processes:
  \[
    \dif W^k_{t,i} = -\alpha_{\mathrm{self}} W^k_{t,i}\,\dif t
    + \sigma_0\,\dif B^k_{t,i}, \quad i=1,\ldots,d,\; k=1,\ldots,K.
  \]
  The unique stationary distribution is
  $\pi = \bigotimes_{k=1}^K \mathcal{N}(0, \sigma_0^2/(2\alpha_{\mathrm{self}}) I_d)$.
  Consequently,
  \begin{equation}
    \E_\pi[V(\W)] = \frac{\beta_0}{2\alpha_{\mathrm{self}}},
    \quad \text{where } \beta_0 := \frac{1}{2}Kd\sigma_0^2.
    \label{eq:noise_floor}
  \end{equation}
\end{theorem}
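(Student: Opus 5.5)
Setting $\gamma=0$ and $\Phi=0$ in \eqref{eq:drift_simple} removes the coupling term, so the drift of agent $k$ is just $-\alpha_{\mathrm{self}}W^k$. Since $\sigma(W^k)=\sigma_0 I_d$ and the $B^k$ are mutually independent $d$-dimensional Brownian motions, the $Kd$ coordinate equations decouple. Each coordinate $W^k_{t,i}$ solves a scalar linear SDE driven only by its own Brownian coordinate $B^k_{t,i}$. The decomposition into independent OU processes is therefore a direct reading of the system, and uniqueness of solutions is already supplied by Proposition~\ref{prop:wellposed}.

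The plan has three steps.

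\textbf{Step 1: solve each scalar equation.} I would solve each scalar equation explicitly by variation of constants:
\[
W^k_{t,i}=e^{-\alpha_{\mathrm{self}}t}W^k_{0,i}+\sigma_0\int_0^t e^{-\alpha_{\mathrm{self}}(t-s)}\dif B^k_{s,i}.
\]
For a deterministic initial condition this is Gaussian. Its mean is $e^{-\alpha_{\mathrm{self}}t}W^k_{0,i}$, and by the It\^o isometry its variance is
\[
\frac{\sigma_0^2}{2\alpha_{\mathrm{self}}}\bigl(1-e^{-2\alpha_{\mathrm{self}}t}\bigr).
\]
As $t\to\infty$ the law converges to $\mathcal{N}(0,\sigma_0^2/(2\alpha_{\mathrm{self}}))$ from every starting point.

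\textbf{Step 2: identify the stationary law and its uniqueness.} Independence of the driving noises makes the joint law a product. This yields the product Gaussian $\pi$ of the statement, and invariance of $\pi$ can be checked directly. If $W_0\sim\pi$ is independent of the noise, then $W_t$ is a sum of independent Gaussians with total variance
\[
e^{-2\alpha_{\mathrm{self}}t}\,\frac{\sigma_0^2}{2\alpha_{\mathrm{self}}}+\frac{\sigma_0^2}{2\alpha_{\mathrm{self}}}\bigl(1-e^{-2\alpha_{\mathrm{self}}t}\bigr)=\frac{\sigma_0^2}{2\alpha_{\mathrm{self}}}.
\]
For uniqueness I would use the convergence of transition laws to $\pi$ from every initial point. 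Any invariant $\nu$ satisfies $\nu=\int P_t(x,\cdot)\,\nu(dx)$, and letting $t\to\infty$ with dominated convergence (tested against bounded continuous functions) forces $\nu=\pi$. Alternatively, uniqueness is exactly the conclusion of Theorem~\ref{thm:stability} at $\gamma=0$, where the condition $\gamma<\gamma_{\mathrm{cert}}(A)$ holds trivially.

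\textbf{Step 3: compute the moment.} Under $\pi$ each coordinate has second moment $\sigma_0^2/(2\alpha_{\mathrm{self}})$. Hence
\[
\E_\pi[V]=\tfrac12\cdot Kd\cdot\frac{\sigma_0^2}{2\alpha_{\mathrm{self}}}=\frac{Kd\sigma_0^2}{4\alpha_{\mathrm{self}}}=\frac{\beta_0}{2\alpha_{\mathrm{self}}},
\]
with $\beta_0=\tfrac12 Kd\sigma_0^2$. As a cross-check, Theorem~\ref{thm:generator} at $\gamma=0$ gives the exact identity $\Lgen V=-2\alpha_{\mathrm{self}}V+\beta_0$. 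Combined with $\E_\pi[\Lgen V]=0$ (justified because $V$ is integrable under the Gaussian $\pi$), this yields the same value with equality rather than an inequality.

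I expect the only real obstacle to be the rigor of uniqueness and of $\E_\pi[\Lgen V]=0$ for the unbounded function $V$. Both become elementary here because the solution is explicit and Gaussian. The fallback is a direct moment ODE: $\frac{d}{dt}\E[V(\W_t)]=-2\alpha_{\mathrm{self}}\E[V(\W_t)]+\beta_0$, which holds by It\^o's formula since the stochastic integral is a true martingale with Gaussian integrand. Its fixed point recovers \eqref{eq:noise_floor}.
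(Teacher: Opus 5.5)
Your proof is correct, but it reaches the stationary law by a different route than the paper. The paper never solves the SDE. It identifies $\pi$ through the stationary Fokker--Planck equation of the scalar OU process, asserts that the unique normalized solution is $\mathcal{N}(0,\sigma_0^2/(2\alpha_{\mathrm{self}}))$, and then sums the coordinate second moments. You instead solve each coordinate by variation of constants, read off the Gaussian transition law, and verify invariance of $\pi$ through the variance identity. You then get uniqueness from $P_t(x,\cdot)\to\pi$ for every $x$ together with dominated convergence.

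Your route takes a few more lines but is more self-contained on the point the paper leaves implicit. The Fokker--Planck argument presupposes that an invariant law has a smooth density, and as written it only pins down each one-dimensional marginal. Uniqueness and product structure of the \emph{joint} law are therefore asserted rather than derived there. (The moment formula itself survives, since it needs only marginals.) Your argument delivers joint uniqueness directly, with no regularity assumption. It also gives convergence from every initial condition at rate $e^{-2\alpha_{\mathrm{self}}t}$ in the variance.

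The paper's route is shorter and matches the PDE viewpoint reused in the appendix, where the Gaussian ansatz is checked against the $Kd$-dimensional Fokker--Planck equation. Your generator cross-check and moment-ODE fallback are sound additions. Note that Step~3 needs only linearity of expectation, not independence.
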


\begin{proof}
  \textbf{Step 1: Decoupled OU dynamics.}
  With $\gamma = 0$, the $K$ agents decouple completely.  Each scalar
  component $W^k_{t,i}$ evolves as
  $\dif X = -\alpha X\,\dif t + \sigma_0\,\dif B$ with
  $\alpha = \alpha_{\mathrm{self}}$.  This is the classical
  Ornstein--Uhlenbeck process \cite{uhlenbeck1930theory}.

  \textbf{Step 2: Stationary distribution of the scalar OU process.}
  The Fokker--Planck equation for the stationary density $p_\infty(x)$ is
  \[
    \frac{\dif}{\dif x}\bigl[\alpha x\, p_\infty(x)\bigr]
    + \frac{\sigma_0^2}{2} \frac{\dif^2 p_\infty}{\dif x^2} = 0.
  \]
  The unique normalized solution is the Gaussian
  $p_\infty(x) = \mathcal{N}(0, \sigma_0^2/(2\alpha))$.
  Hence $\E_\pi[(W^k_{t,i})^2] = \sigma_0^2/(2\alpha_{\mathrm{self}})$.

  \textbf{Step 3: Stationary second moment of $V$.}
  By independence across components and agents:
  \begin{align*}
    \E_\pi[V(\W)]
    &= \frac{1}{2} \sum_{k=1}^K \E_\pi\!\left[\norm{W^k}^2\right]
     = \frac{1}{2} \sum_{k=1}^K \sum_{i=1}^d
       \E_\pi\!\left[(W^k_i)^2\right] \\
    &= \frac{1}{2} \cdot K \cdot d \cdot \frac{\sigma_0^2}{2\alpha_{\mathrm{self}}}
     = \frac{Kd\sigma_0^2}{4\alpha_{\mathrm{self}}}
     = \frac{\beta_0}{2\alpha_{\mathrm{self}}}.
  \end{align*}
  The last equality uses $\beta_0 = \tfrac{1}{2}Kd\sigma_0^2$.
\end{proof}

\begin{remark}
  Theorem~\ref{thm:noise_floor} confirms that the noise floor
  $\beta_0$ in the Foster--Lyapunov bound \eqref{eq:foster_lyapunov}
  is tight in the decoupled case: the stationary expectation
  equals exactly $\beta_0 / (2\alpha_{\mathrm{self}})$, matching the
  generator formula $\Lgen V(\W)\big|_{\gamma=0} = -2\alpha_{\mathrm{self}}
  V(\W) + \beta_0$.  Numerical Study~C3 verifies this formula
  empirically across eight parameter configurations.
\end{remark}

\subsection{Almost-Sure Convergence}

The Foster--Lyapunov condition implies more than moment bounds.

For governance the distinction between moment bounds and pathwise behavior matters: a guarantee in expectation can hold while individual realized trajectories wander, whereas the pathwise, time-ergodic convergence below means the \emph{ensemble} settles to a characterized stationary regime against which monitoring thresholds can be calibrated.

\begin{theorem}[Almost-sure convergence to the stationary regime]
  \label{thm:as_convergence}
  Under the conditions of Theorem~\ref{thm:stability}, let
  $\{\W_t\}$ start from any initial condition $\W_0 \in \R^{dK}$.
  Then $V(\W_t) \to \E_\pi[V(\W)]$ in the time-ergodic sense:
  \begin{equation}
    \frac{1}{T}\int_0^T V(\W_t)\,\dif t
    \xrightarrow[T\to\infty]{a.s.}
    \E_\pi[V(\W)].
    \label{eq:ergodic}
  \end{equation}
  Furthermore, there exist constants $C, c > 0$ such that
  \begin{equation}
    \abs{\E[V(\W_t)] - \E_\pi[V(\W)]}
    \leq C \, e^{-c\, t} \bigl(V(\W_0) - \E_\pi[V]\bigr)^+.
    \label{eq:exp_convergence}
  \end{equation}
\end{theorem}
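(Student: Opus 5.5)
Throughout I take the hypotheses of Theorem~\ref{thm:stability}: $\gamma<\gamma_{\mathrm{cert}}(A)$, equivalently $\alpha_{\mathrm{self}}I+\gamma A_{\mathrm{sym}}\succ 0$. The plan is to exploit that, under Assumption~\ref{ass:loss}, \eqref{eq:sde} is a linear Gaussian (Ornstein--Uhlenbeck) system on $\R^{dK}$:
\[
\dif \mathbf{w}_t = \bigl(-(M\otimes I_d)\mathbf{w}_t + b\bigr)\dif t + \sigma_0\,\dif \mathbf{B}_t,
\qquad M:=\alpha_{\mathrm{self}}I_K+\gamma A,
\]
with $b:=\gamma(A\mathbf{1})\otimes\Phi=\gamma\,\mathbf{1}\otimes\Phi$ by row-stochasticity.

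\textbf{Stability of the drift matrix.} The symmetric part of $M$ is $M_{\mathrm{sym}}=\alpha_{\mathrm{self}}I+\gamma A_{\mathrm{sym}}$. Under the certificate its smallest eigenvalue is $\mu:=\alpha_{\mathrm{self}}-\gamma\abs{\lambda_{\min}(A_{\mathrm{sym}})}>0$. The energy estimate $\tfrac{\dif}{\dif t}\norm{x_t}^2=-2x_t^\top(M_{\mathrm{sym}}\otimes I_d)x_t$ for $\dot x=-(M\otimes I_d)x$ then gives
\[
\norm{e^{-t(M\otimes I_d)}}\le e^{-\mu t},
\]
with constant $1$. This single contraction estimate drives everything that follows.

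\textbf{Ergodic average \eqref{eq:ergodic}.} Theorem~\ref{thm:stability} supplies a unique invariant law $\pi$. Since the diffusion is uniformly elliptic, the process is strong Feller and irreducible, and with the Foster--Lyapunov drift \eqref{eq:foster_lyapunov} it is positive Harris recurrent (Meyn--Tweedie). The stationary bound \eqref{eq:stationary_bound} gives $V\in L^1(\pi)$. The ergodic theorem for positive Harris recurrent continuous-time Markov processes then yields $\tfrac1T\int_0^T V(\W_t)\dif t\to\E_\pi[V]$ almost surely, from every initial point.

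An explicit alternative avoids the abstract machinery. Write $\mathbf{w}_t=m_t+Z_t$, where $m_t$ is the deterministic mean and $Z_t$ is a centred Gaussian process. Then:
\begin{itemize}
\item $m_t\to m_\infty=(M\otimes I_d)^{-1}b$ exponentially, so $\tfrac1T\int_0^T\norm{m_t}^2\dif t\to\norm{m_\infty}^2$;
\item $Z_t$ is a stationary-in-the-limit Gaussian process whose covariance decays exponentially in the lag, which gives the law of large numbers for $\norm{Z_t}^2$ by a second-moment argument plus Borel--Cantelli along integer times;
\item the cross term $\tfrac1T\int_0^T\langle m_t,Z_t\rangle\dif t$ vanishes by the same second-moment argument.
\end{itemize}

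\textbf{Exponential rate \eqref{eq:exp_convergence}.} Compute $\E[V(\W_t)]=\tfrac12\bigl(\norm{m_t}^2+\Tr\Sigma_t\bigr)$ explicitly. The moments are
\[
m_t-m_\infty=e^{-t(M\otimes I_d)}(\mathbf{w}_0-m_\infty),
\qquad
\Sigma_t=\sigma_0^2\int_0^t e^{-s(M\otimes I_d)}e^{-s(M\otimes I_d)^\top}\dif s .
\]
The covariance satisfies $\abs{\Tr\Sigma_t-\Tr\Sigma_\infty}\le \sigma_0^2 dK e^{-2\mu t}/(2\mu)$. Combining with the mean estimate gives
\[
\abs{\E[V(\W_t)]-\E_\pi[V]}\le C\,e^{-\mu t}\bigl(1+V(\W_0)\bigr),
\]
where $C$ depends on $\norm{m_\infty}$, $\sigma_0$, $d$, $K$ and $\mu$. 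So one may take $c=\mu$.

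\textbf{Main obstacle: the stated prefactor is false.} The exact factor $\bigl(V(\W_0)-\E_\pi[V]\bigr)^+$ in \eqref{eq:exp_convergence} cannot be proved, because it fails in general. Take $\gamma=0$ and $\W_0=\mathbf 0$. Then $\E[V(\W_t)]=\E_\pi[V](1-e^{-2\alpha_{\mathrm{self}}t})$, so the left side equals $\E_\pi[V]e^{-2\alpha_{\mathrm{self}}t}>0$, while the right side is $0$.

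I would therefore prove the rate with prefactor $C(1+V(\W_0))$, or equivalently $C\bigl(\abs{V(\W_0)-\E_\pi V}+\E_\pi V+\norm{m_\infty}^2\bigr)$, and flag that the positive-part form in the statement must be replaced by this one.
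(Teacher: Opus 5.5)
Your proposal is sound, and your counterexample is right. With $\gamma=0$ and $\W_0=\mathbf 0$, the left side of \eqref{eq:exp_convergence} is $\E_\pi[V]\,e^{-2\alpha_{\mathrm{self}}t}>0$ while the right side is $0$. In fact the inequality already fails at $t=0$ for every $\W_0$ with $V(\W_0)<\E_\pi[V]$ and every admissible $\gamma$.

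Your argument for the ergodic average \eqref{eq:ergodic} is the paper's own: uniform ellipticity plus the Foster--Lyapunov drift give positive Harris recurrence, and the ergodic theorem finishes. You also correctly make explicit the hypothesis $V\in L^1(\pi)$, which the paper leaves tacit. One repair is needed in your explicit Gaussian alternative. The variance of $\tfrac1T\int_0^T(\norm{Z_t}^2-\E\norm{Z_t}^2)\dif t$ is only of order $1/T$, which is not summable along integer times. Run Borel--Cantelli along $T_n=n^2$ and interpolate between consecutive squares, or use fourth moments, which Gaussianity provides.

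For the rate you take a genuinely different route, and the paper's route does not reach the stated bound either. The paper applies It\^o's formula to $V$, uses $\Lgen V\le-\alpha_{\mathrm{eff}}V+\beta$ with an integrating factor, and correctly obtains
\[
  \E[V(\W_t)]\le e^{-\alpha_{\mathrm{eff}}t}V(\W_0)+\frac{\beta}{\alpha_{\mathrm{eff}}}\bigl(1-e^{-\alpha_{\mathrm{eff}}t}\bigr).
\]
It then asserts $\abs{\E[V(\W_t)]-\beta/\alpha_{\mathrm{eff}}}\le e^{-\alpha_{\mathrm{eff}}t}\abs{V(\W_0)-\beta/\alpha_{\mathrm{eff}}}$ and reads this as \eqref{eq:exp_convergence} with $C=1$, $c=\alpha_{\mathrm{eff}}$. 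Both moves fail:
\begin{itemize}
  \item No lower bound on $\E[V(\W_t)]$ is ever derived, so the two-sided form is unsupported.
  \item $\beta/\alpha_{\mathrm{eff}}$ is only an upper bound for $\E_\pi[V]$, and is strictly larger even at $\gamma=0$: it equals $\beta_0/(2\alpha_{\mathrm{self}}-\varepsilon_0)$ against $\beta_0/(2\alpha_{\mathrm{self}})$.
\end{itemize}
What the It\^o argument actually proves is the one-sided overshoot bound $\bigl(\E[V(\W_t)]-\beta/\alpha_{\mathrm{eff}}\bigr)^+\le e^{-\alpha_{\mathrm{eff}}t}\bigl(V(\W_0)-\beta/\alpha_{\mathrm{eff}}\bigr)^+$. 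This is presumably where the positive part in the statement comes from.

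\textbf{The trade-off.} The paper's argument needs only the drift inequality, so it would survive for nonlinear drifts and comes with explicit constants. However, it yields only an upper envelope at level $\beta/\alpha_{\mathrm{eff}}$, never convergence to $\E_\pi[V]$. Your route exploits linearity (explicit mean and covariance, as in part~(i) of Theorem~\ref{thm:phase}) and the contraction $\norm{e^{-t(M\otimes I_d)}}\le e^{-\mu t}$. It gives a genuine two-sided bound to the true stationary mean. To close it, state that $\pi=\mathcal N(m_\infty,\Sigma_\infty)$ by uniqueness of the invariant law, so that $\E_\pi[V]=\tfrac12(\norm{m_\infty}^2+\Tr\Sigma_\infty)$.

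\textbf{On the rate.} Taking a first-order rate $c=\mu$, rather than $\approx2\mu$, is forced, not a weakness. The cross term is $2\ip{\mathbf w_0-m_\infty}{e^{-t(M\otimes I_d)^\top}m_\infty}$ with $m_\infty=\tfrac{\gamma}{\alpha_{\mathrm{self}}+\gamma}\mathbf 1\otimes\Phi$, and it is linear in the semigroup. For the non-normal star with $\gamma>0$ and $\Phi\neq0$, the vector $\mathbf 1$ has a nonzero component along the eigenvector of $A^\top$ for the eigenvalue $-1$. Hence, for generic $\W_0$, the deviation decays no faster than $e^{-(\alpha_{\mathrm{self}}-\gamma)t}$. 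At $K=5$ this is slower than $e^{-\alpha_{\mathrm{eff}}t}$ whenever $0<\gamma<\tfrac23\alpha_{\mathrm{self}}$ and $\varepsilon_0<\alpha_{\mathrm{self}}-\tfrac32\gamma$. So the paper's choice $c=\alpha_{\mathrm{eff}}$ is unavailable there as well.
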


\begin{proof}
  \textbf{Ergodic convergence.}
  The process $\{\W_t\}$ is a non-degenerate uniformly elliptic
  diffusion satisfying a Foster--Lyapunov condition with radially
  unbounded $V$, hence it is positive Harris recurrent
  \cite{meyn1993stability}.  The ergodic theorem for positive Harris
  recurrent processes \cite{revuz1999continuous} gives \eqref{eq:ergodic}.

  \textbf{Exponential rate.}
  From It\^{o}'s formula applied to $V(\W_t)$:
  \[
    \dif V(\W_t) = \Lgen V(\W_t)\,\dif t + \dif M_t,
  \]
  where $M_t = \sigma_0 \sum_k \ip{W^k_t}{\dif B^k_t}$ is a local
  martingale \cite{rogers2000diffusions}.  Using \eqref{eq:foster_lyapunov}:
  \[
    \dif V(\W_t) \leq (-\alpha_{\mathrm{eff}} V(\W_t) + \beta)\,\dif t
    + \dif M_t.
  \]
  Multiplying by the integrating factor $e^{\alpha_{\mathrm{eff}} t}$ and
  integrating:
  \[
    e^{\alpha_{\mathrm{eff}} t} V(\W_t) \leq V(\W_0)
    + \frac{\beta}{\alpha_{\mathrm{eff}}}(e^{\alpha_{\mathrm{eff}} t} - 1)
    + e^{\alpha_{\mathrm{eff}} t} M_t^*.
  \]
  where $M_t^* = \int_0^t e^{-\alpha_{\mathrm{eff}} s}\,\dif M_s$ is a
  square-integrable martingale by the Burkholder--Davis--Gundy inequality
  \cite{revuz1999continuous}.  Taking expectations:
  \[
    \E[V(\W_t)] \leq e^{-\alpha_{\mathrm{eff}} t} V(\W_0)
    + \frac{\beta}{\alpha_{\mathrm{eff}}}(1 - e^{-\alpha_{\mathrm{eff}} t}).
  \]
  Thus $\abs{\E[V(\W_t)] - \beta/\alpha_{\mathrm{eff}}} \leq
  e^{-\alpha_{\mathrm{eff}} t} |V(\W_0) - \beta/\alpha_{\mathrm{eff}}|$,
  establishing \eqref{eq:exp_convergence} with $c = \alpha_{\mathrm{eff}}$
  and $C = 1$.
\end{proof}

\section{The Critical Coupling Threshold}
\label{sec:phase}

\subsection{The Critical Threshold}

Theorem~\ref{thm:stability} gives a \emph{sufficient} stability certificate
$\gamma < \gamma_{\mathrm{cert}}(A) = \alpha_{\mathrm{self}}/\abs{\lambda_{\min}(A_{\mathrm{sym}})}$.
We now determine the \emph{exact} threshold and identify the mode that goes
unstable.  The key structural fact is that the system is a linear
(Ornstein--Uhlenbeck) diffusion, so both its mean and its second moment are
stable if and only if the joint drift operator is positive-stable.

\begin{remark}[The spectral radius cannot govern topology-dependent risk]
\label{rem:perron}
By Perron--Frobenius, every row-stochastic non-negative $A$ has
$\lambda_{\max}(A)=1$, with the all-ones (consensus) Perron eigenvector.
A threshold of the form $\gamma^*=c\,\alpha_{\mathrm{self}}/\lambda_{\max}(A)$
is therefore $c\,\alpha_{\mathrm{self}}$ for \emph{every} admissible
topology---a topology-\emph{independent} number, which cannot be the basis
of a topology-dependent risk thesis.  The resolution is that the Perron
root is the most \emph{stabilized} direction (see below), not the
destabilizing one; the genuine instability is controlled by
$\lambda_{\min}(A)$, the most negative eigenvalue of $A$.
\end{remark}

This is the central result of the paper. The threshold $\gamma^*(A)$ is a \emph{critical governance threshold}: a hard, topology-dependent ceiling on how tightly a population of models may be coupled before interconnection alone tips the ensemble from a stable regime into one whose risk grows without bound. It is, in principle, a governance limit of the same character as a position or concentration limit---an observable threshold on a structural quantity---and, as the remark above stresses, it is governed by the system's \emph{disagreement} mode, not the consensus mode that per-model intuition would lead one to monitor.

\begin{theorem}[Critical coupling threshold]
  \label{thm:phase}
  Fix $A$, let $\lambda_{\min}(A)$ be its most negative eigenvalue (real
  part), and suppose $\lambda_{\min}(A)<0$.  With
  \[
    \gamma^*(A) := \frac{\alpha_{\mathrm{self}}}{\abs{\lambda_{\min}(A)}},
  \]
  the family of processes parameterized by $\gamma\ge0$ satisfies:
  \begin{enumerate}[label=(\roman*)]
    \item \textbf{Stable regime.} If $\gamma < \gamma^*(A)$, the joint drift
      operator $M=\alpha_{\mathrm{self}} I+\gamma(A\otimes I_d)$ is
      positive-stable, $\{\W_t\}$ has a unique (Gaussian) stationary
      distribution, and $\E[V(\W_t)]\to\E_\pi[V(\W)]$ exponentially fast.
    \item \textbf{Critical point.} If $\gamma = \gamma^*(A)$, the mode along
      the $\lambda_{\min}(A)$ eigenvector is marginally stable and
      $\E[V(\W_t)]$ grows \emph{linearly} in $t$, so no stationary
      distribution exists.
    \item \textbf{Unstable regime.} If $\gamma > \gamma^*(A)$, let
      $\zeta\in\R^K$ satisfy $A\zeta=\lambda_{\min}(A)\zeta$ and $u\in\R^d$
      be any unit vector.  Then the mean along $v=\zeta\otimes u$ diverges,
      $\norm{\E[\W_t]}\ge c\,e^{(\gamma\abs{\lambda_{\min}(A)}-\alpha_{\mathrm{self}})t}$,
      and hence $\E[V(\W_t)]\ge\tfrac12\norm{\E[\W_t]}^2\to\infty$.
  \end{enumerate}
\end{theorem}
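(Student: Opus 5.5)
The plan is to exploit linearity: under Assumption~\ref{ass:loss}, the stacked state $\mathbf{w}=\mathrm{vec}(\W)$ solves the multivariate Ornstein--Uhlenbeck equation
\[
  \dif \mathbf{w}_t = -\bigl(M\mathbf{w}_t - b\bigr)\dif t + \sigma_0\,\dif \mathbf{B}_t,
  \qquad M=\alpha_{\mathrm{self}} I_{dK}+\gamma (A\otimes I_d),\quad b=\gamma (A\mathbf{1})\otimes\Phi=\gamma\,\mathbf{1}\otimes\Phi .
\]
The last identity uses row-stochasticity. The spectrum of $M$ is $\{\alpha_{\mathrm{self}}+\gamma\lambda:\lambda\in\spec(A)\}$, each eigenvalue with multiplicity $d$. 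Its smallest real part is therefore $\alpha_{\mathrm{self}}+\gamma\,\mathrm{Re}\,\lambda_{\min}(A)=\alpha_{\mathrm{self}}-\gamma\abs{\lambda_{\min}(A)}$. This is where the convention ``most negative real part'' enters. All three regimes then reduce to the sign of this single number $s(\gamma):=\alpha_{\mathrm{self}}-\gamma\abs{\lambda_{\min}(A)}$.

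The first and second moments are handled by the explicit mild solution
\[
  \mathbf{w}_t=e^{-Mt}\mathbf{w}_0+\int_0^t e^{-M(t-s)}b\,\dif s+\sigma_0\int_0^t e^{-M(t-s)}\dif\mathbf{B}_s .
\]
This gives $m_t:=\E[\mathbf{w}_t]$ solving $\dot m=-Mm+b$. The covariance $\Sigma_t$ solves the Lyapunov ODE $\dot\Sigma=-M\Sigma-\Sigma M^\top+\sigma_0^2 I$, with $\Sigma_t=\sigma_0^2\int_0^t e^{-Ms}e^{-M^\top s}\dif s$, and $\E[V(\W_t)]=\tfrac12(\norm{m_t}^2+\Tr\Sigma_t)$.

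For (i), $s(\gamma)>0$ means $-M$ is Hurwitz, so $\norm{e^{-Mt}}\le C_\eta e^{-(s-\eta)t}$ for any small $\eta>0$. Hence $m_t\to M^{-1}b$ exponentially. The integral $\Sigma_\infty=\sigma_0^2\int_0^\infty e^{-Ms}e^{-M^\top s}\dif s$ converges, and $\Sigma_t\to\Sigma_\infty$ exponentially. The Gaussian law $\mathcal{N}(M^{-1}b,\Sigma_\infty)$ is then invariant. It is also the unique invariant law: any invariant $\mu$ satisfies $\mu=\mu P_t\to\mathcal{N}(M^{-1}b,\Sigma_\infty)$, by dominated convergence on characteristic functions. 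Exponential convergence of $\E[V]$ follows from the two exponential rates.

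For (iii), take $\zeta$ with $A\zeta=\lambda_{\min}(A)\zeta$. When $\lambda_{\min}(A)$ is real, $\zeta$ can be chosen real, and I would note that the statement implicitly assumes this. Project the mean ODE onto a left eigenvector $\xi$ of $A$ for $\lambda_{\min}(A)$, with $\langle\xi,\zeta\rangle\ne0$. Set $y_t=\langle \xi\otimes u, m_t\rangle$. Then $\dot y=-s(\gamma)y+\langle\xi\otimes u,b\rangle$. Since $\lambda_{\min}<0\ne1$, the left eigenvector $\xi$ is orthogonal to the right Perron vector $\mathbf{1}$. Therefore $\langle\xi\otimes u,\mathbf{1}\otimes\Phi\rangle=\langle\xi,\mathbf{1}\rangle\langle u,\Phi\rangle=0$, which kills the forcing term. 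So $y_t=e^{\abs{s}t}y_0$, and $\norm{m_t}\ge\abs{y_t}/\norm{\xi}\ge c\,e^{(\gamma\abs{\lambda_{\min}}-\alpha_{\mathrm{self}})t}$ provided $y_0\ne0$. Finally, Jensen gives $\E[V]\ge\tfrac12\norm{m_t}^2$.

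The main obstacle is that the stated bound needs a nondegenerate initial condition. For $\W_0$ with zero component along that mode, the mean does not blow up. I would handle this either by stating the bound for generic $\W_0$ with $c=\abs{y_0}/\norm{\xi}$, or by proving divergence of $\E[V]$ through the covariance instead. The covariance route holds for every $\W_0$: the noise is isotropic, so
\[
(\xi\otimes u)^{*}\Sigma_t(\xi\otimes u)=\sigma_0^2\norm{\xi}^2\int_0^t e^{2\abs{s}r}\dif r\to\infty .
\]
This uses $(\xi\otimes u)^* e^{-Mr}=e^{-(\alpha_{\mathrm{self}}+\gamma\lambda_{\min})r}(\xi\otimes u)^*$.

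For (ii) I would use the same covariance computation with $s=0$. It gives $\sigma_0^2\norm{\xi}^2 t$, so $\Tr\Sigma_t\ge c' t$, i.e.\ $\E[V]$ grows at least linearly. Two further checks are needed for exactly linear growth. First, the other modes of $M$ must have strictly positive real part, or at least satisfy $s=0$ only semisimply. A nontrivial Jordan block at the critical eigenvalue would give polynomial growth of higher degree, so I would assume semisimplicity, or restrict the linear-growth claim to normal or diagonalizable $A$. Second, the mean along the critical mode must stay constant, which the orthogonality argument above gives. Nonexistence of a stationary law then follows because a stationary law with finite second moment contradicts unbounded $\E[V]$. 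For laws without finite second moment, I would argue instead that the projection $\langle\xi\otimes u,\mathbf{w}_t\rangle$ is a driftless Brownian motion, which is not positive recurrent.
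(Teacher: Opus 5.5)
Your proposal is correct and follows essentially the same route as the paper: stack into the linear OU system with $M=\alpha_{\mathrm{self}}I+\gamma(A\otimes I_d)$, read $\spec(M)$ off $\spec(A)$, and settle the three regimes with the mean ODE, the Lyapunov covariance ODE, and Jensen. Your extra care is warranted rather than redundant. First, you kill the $\Phi$-forcing via $\langle\xi,\mathbf{1}\rangle=0$, which the paper's ``shifts the fixed point'' glosses over at $\gamma^*$, where $M$ may be singular. Second, your covariance route in (iii) works for every $\W_0$. Third, and most important, your semisimplicity caveat in (ii) is genuinely needed: the admissible $A$ with rows $(0,\tfrac12,\tfrac12)$, $(0,0,1)$, $(\tfrac12,\tfrac12,0)$ has a $2\times2$ Jordan block at $\lambda_{\min}(A)=-\tfrac12$, so $\E[V(\W_t)]$ grows cubically, not linearly, at $\gamma^*$, whereas the paper checks linear growth only for normal $A$.
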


\begin{remark}[On terminology: a stability threshold, not a phase transition]
\label{rem:not_phase}
Earlier drafts described the behavior at $\gamma^*(A)$ as a \emph{phase transition}. We avoid that
term. What Theorem~\ref{thm:phase} establishes is the crossing of an eigenvalue of the joint drift
operator through zero: for $\gamma<\gamma^*$ the operator $M$ is positive-stable and a stationary
law exists; at $\gamma=\gamma^*$ the leading eigenvalue reaches the imaginary axis; above it, the
second moment grows without bound. This is linear (in)stability of an Ornstein--Uhlenbeck system,
and it is exactly and analytically solvable. It has none of the features that give the physical term
its content --- there is no order parameter, no critical exponent, no non-analyticity of a free
energy, and no thermodynamic limit. The transition is sharp in $\gamma$ simply because the real part
of an eigenvalue is a continuous function that changes sign at a point. We retain the language of a
\emph{critical coupling} because $\gamma^*$ is a genuine threshold with governance meaning, but the
mathematics behind it is an eigenvalue computation, and we do not want the terminology to suggest
otherwise.
\end{remark}

\begin{proof}
  Analyze the homogeneous part: with $\Phi$ present the drift acquires the
  bounded affine term $\gamma(\mathbf 1_K\otimes\Phi)$, which shifts the
  fixed point but leaves the exponential rates unchanged.  Stacking
  $\mathbf w=\mathrm{vec}(\W)$, the dynamics are the linear SDE
  $\dif\mathbf w_t=-M\mathbf w_t\,\dif t+\sigma_0\,\dif\mathbf B_t$ with
  $M=\alpha_{\mathrm{self}} I_{dK}+\gamma(A\otimes I_d)$.  The eigenvalues of
  $M$ are $\alpha_{\mathrm{self}}+\gamma\nu$ for $\nu\in\spec(A)$ (since
  $\spec(A\otimes I_d)=\spec(A)$, each with multiplicity $d$), so $M$ is
  positive-stable---$\mathrm{Re}\,\mu>0$ for all $\mu\in\spec(M)$---iff
  $\alpha_{\mathrm{self}}+\gamma\,\mathrm{Re}\,\nu>0$ for all $\nu$, i.e.\ iff
  $\gamma<\alpha_{\mathrm{self}}/\abs{\lambda_{\min}(A)}=\gamma^*(A)$.

  \emph{Part (i).}  For a linear SDE the mean solves
  $\dot{\E}[\mathbf w]=-M\,\E[\mathbf w]$ and the covariance solves the
  Lyapunov equation $\dot\Sigma=-M\Sigma-\Sigma M^\top+\sigma_0^2 I$.  When
  $M$ is positive-stable both converge---$\E[\mathbf w_t]\to\mathbf
  w_\infty$ and $\Sigma_t\to\Sigma_\infty$, the unique solution of
  $M\Sigma_\infty+\Sigma_\infty M^\top=\sigma_0^2 I$---at exponential rate
  $2\min_i\mathrm{Re}\,\mu_i(M)=2(\alpha_{\mathrm{self}}-\gamma\abs{\lambda_{\min}(A)})$,
  and the stationary law is the Gaussian $\mathcal N(\mathbf w_\infty,\Sigma_\infty)$.
  Hence $\E[V(\W_t)]=\tfrac12(\norm{\E[\mathbf w_t]}^2+\Tr\Sigma_t)$ converges.
  When $A$ is normal this is sharp mode-by-mode: diagonalizing
  $A=\sum_a\nu_a\zeta_a\zeta_a^*$ with orthonormal $\zeta_a$, the modal
  second moments obey
  \begin{equation}
    \frac{\dif m_a}{\dif t}=-2(\alpha_{\mathrm{self}}+\gamma\nu_a)\,m_a+\sigma_0^2 d,
    \label{eq:modal_ode}
  \end{equation}
  each converging to $\sigma_0^2 d/[2(\alpha_{\mathrm{self}}+\gamma\nu_a)]$.

  \emph{Part (ii).}  At $\gamma=\gamma^*(A)$ the eigenvalue
  $\mu_\star=\alpha_{\mathrm{self}}+\gamma\lambda_{\min}(A)$ of $M$ has zero
  real part, so the Lyapunov equation has no bounded solution: variance
  accumulates along the marginal direction at constant rate and
  $\E[V(\W_t)]\sim\tfrac12\sigma_0^2 d\,t$ (for normal $A$,
  \eqref{eq:modal_ode} for the marginal mode reads $\dot m_{a^\star}=\sigma_0^2 d$).

  \emph{Part (iii).}  If $\gamma>\gamma^*(A)$ then $M$ has the eigenvalue
  $\mu_\star=\alpha_{\mathrm{self}}+\gamma\lambda_{\min}(A)$ with
  $\mathrm{Re}\,\mu_\star=\alpha_{\mathrm{self}}-\gamma\abs{\lambda_{\min}(A)}<0$.
  Choosing an initial mean with a nonzero component on the corresponding
  eigenspace, $\E[\mathbf w_t]=e^{-Mt}\E[\mathbf w_0]$ has
  $\norm{\E[\mathbf w_t]}\ge c\,e^{\abs{\mathrm{Re}\,\mu_\star}t}
  =c\,e^{(\gamma\abs{\lambda_{\min}(A)}-\alpha_{\mathrm{self}})t}$.  By
  Jensen, $\E[V(\W_t)]=\tfrac12\E\norm{\mathbf w_t}^2\ge\tfrac12\norm{\E[\mathbf w_t]}^2\to\infty$.
\end{proof}

\begin{remark}[The Perron mode is the most stable; the sign error]
\label{rem:signerror}
The consensus mode $\xi=\mathbf 1_K/\sqrt K$ has
$\mu=\lambda_{\max}(A_{\mathrm{sym}})=1$, hence modal rate
$-2(\alpha_{\mathrm{self}}+\gamma)<0$: it is the \emph{fastest-decaying}
direction, not an unstable one.  A previous version of this theorem
projected the instability argument onto the Perron eigenvector and wrote
the drift coefficient as $(\gamma\lambda_{\max}(A)-\alpha_{\mathrm{self}})$.
That sign is wrong: the coupling enters the drift as $-\gamma(A\otimes I)\mathbf w$
(Eq.~\eqref{eq:drift_simple}), so an $A$-mode with eigenvalue $\mu$ has
first-moment rate $-(\alpha_{\mathrm{self}}+\gamma\mu)$, which is
$-(\alpha_{\mathrm{self}}+\gamma)$ on the Perron mode---decaying.  The
genuine instability lives on the $\lambda_{\min}(A)$ mode and was never
identified.
\end{remark}

\begin{remark}[Exact threshold vs.\ the Lyapunov certificate; normal vs.\ non-normal]
\label{rem:moments}
Two thresholds appear in this paper and they should not be conflated.  The
\emph{exact} phase-transition threshold is
$\gamma^*(A)=\alpha_{\mathrm{self}}/\abs{\lambda_{\min}(A)}$, set by the
spectral abscissa of $M$ (Theorem~\ref{thm:phase}).  The quadratic Lyapunov
function $V$ delivers only the \emph{sufficient} certificate
$\gamma_{\mathrm{cert}}(A)=\alpha_{\mathrm{self}}/\abs{\lambda_{\min}(A_{\mathrm{sym}})}$
(Theorem~\ref{thm:stability}), because the generator bound sees the
symmetric part $A_{\mathrm{sym}}$.  Since
$\abs{\lambda_{\min}(A_{\mathrm{sym}})}\ge\abs{\lambda_{\min}(A)}$ always,
$\gamma_{\mathrm{cert}}(A)\le\gamma^*(A)$, with equality iff $A$ is normal.
For the symmetric complete and ring graphs the two coincide
($0.40$ and $0.124$).  For the non-normal star they separate:
$\gamma^*=\alpha_{\mathrm{self}}/1=0.10$ but
$\gamma_{\mathrm{cert}}=\alpha_{\mathrm{self}}/1.25=0.08$.  The gap is the
price of a quadratic certificate on non-normal coupling: $V$ cannot certify
the genuinely-stable band $\gamma\in(0.08,0.10)$ for the star.  Crucially the
certificate errs on the \emph{safe} side---it never certifies an unstable
system---so it remains the right object to attest; closing the gap requires
a non-quadratic (e.g.\ solution-of-Lyapunov-equation, weighted) certificate
$V_P(\W)=\tfrac12\mathbf w^\top(P\otimes I_d)\mathbf w$ with $P\succ0$ chosen
so that $PM+M^\top P\succ0$ up to $\gamma^*$.
\end{remark}

\subsection{Topology Dependence}

The exact threshold $\gamma^*(A)=\alpha_{\mathrm{self}}/\abs{\lambda_{\min}(A)}$
is genuinely topology-dependent, because $\abs{\lambda_{\min}(A)}$
varies across graphs while $\lambda_{\max}(A)=1$ does not.  For the three
canonical topologies at $K=5$:
\[
  \begin{aligned}
  \abs{\lambda_{\min}(A)}:\quad & \text{complete } 0.25 \;<\; \text{ring } 0.809 \;<\; \text{star } 1.0,\\
  \gamma^*:\quad & \text{complete } 0.40 \;>\; \text{ring } 0.124 \;>\; \text{star } 0.10.
  \end{aligned}
\]
(taking $\alpha_{\mathrm{self}}=0.10$).  The complete graph is the most
robust to coupling and the star the least---the opposite ranking would be
obtained from the (constant, hence uninformative) spectral radius, and a
different ranking again from the spectral gap $\lambda_{\max}-\lambda_2$.
The conservative Lyapunov certificate orders the same way, through
$\abs{\lambda_{\min}(A_{\mathrm{sym}})}$ (complete $0.25$, ring $0.809$,
star $1.25$), giving $\gamma_{\mathrm{cert}}$ of $0.40$, $0.124$, $0.08$.

\begin{corollary}[Topology ordering]
  \label{cor:topology}
  Fix $K$ and a coupling $\gamma$ in the stable range of all three
  topologies.  Then the exact critical thresholds are ordered
  $\gamma^*_{\mathrm{complete}} > \gamma^*_{\mathrm{ring}} > \gamma^*_{\mathrm{star}}$,
  and the steady-state Lyapunov levels are ordered
  \[
    \E[V_\infty]_{\mathrm{complete}}
    \;\leq\;
    \E[V_\infty]_{\mathrm{ring}}
    \;\leq\;
    \E[V_\infty]_{\mathrm{star}},
  \]
  because the destabilizing eigenvalues satisfy
  $\abs{\lambda_{\min}(A^{\mathrm{complete}})}
  \leq \abs{\lambda_{\min}(A^{\mathrm{ring}})}
  \leq \abs{\lambda_{\min}(A^{\mathrm{star}})}$, so for fixed $\gamma$ the
  star sits closest to (or beyond) its lower critical threshold.  The
  governing quantity is $\abs{\lambda_{\min}(A)}$, not the spectral gap
  $\lambda_2(A)$.
\end{corollary}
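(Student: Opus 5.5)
The plan is to prove the two orderings separately, working throughout with the exact linear-OU description from the proof of Theorem~\ref{thm:phase}: $\dif\mathbf w_t=-(M\mathbf w_t-\gamma\,\mathbf 1_K\otimes\Phi)\dif t+\sigma_0\dif\mathbf B_t$ with $M=\alpha_{\mathrm{self}}I+\gamma(A\otimes I_d)$.

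\textbf{Threshold ordering.} This is pure eigenvalue bookkeeping from Section~\ref{ssec:topologies}, done for general $K$. The three values are $\abs{\lambda_{\min}(A^{\mathrm{complete}})}=1/(K-1)$, $\abs{\lambda_{\min}(A^{\mathrm{ring}})}=\abs{\min_m\cos(2\pi m/K)}$, which equals $\cos(\pi/K)$ for odd $K$ and $1$ for even $K$, and $\abs{\lambda_{\min}(A^{\mathrm{star}})}=1$. The elementary inequalities $1/(K-1)\le\cos(\pi/K)\le1$ for $K\ge3$ then give the weak chain of eigenvalues, and dividing $\alpha_{\mathrm{self}}$ by it gives the chain for $\gamma^*$. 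I would record that strictness between ring and star needs $K$ odd: for even $K$ both thresholds equal $\alpha_{\mathrm{self}}$. The strict statement should therefore be read at odd $K$, such as the paper's $K=5$.

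\textbf{Reducing the steady-state comparison.} By the proof of Theorem~\ref{thm:phase},
\[
\E_\pi[V]=\tfrac12\bigl(\norm{\mathbf w_\infty}^2+\Tr\Sigma_\infty\bigr).
\]
Row-stochasticity gives $(A\otimes I_d)(\mathbf 1_K\otimes\Phi)=\mathbf 1_K\otimes\Phi$, so
\[
\mathbf w_\infty=\frac{\gamma}{\alpha_{\mathrm{self}}+\gamma}\,\mathbf 1_K\otimes\Phi
\]
for every admissible topology. The mean contribution is therefore identical across the three graphs, and it suffices to order $\Tr\Sigma_\infty=\sigma_0^2\int_0^\infty\norm{e^{-Mt}}_F^2\dif t$.

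\textbf{Complete versus ring.} Both graphs are normal, so the modal formula \eqref{eq:modal_ode} gives
\[
\Tr\Sigma_\infty=\frac{\sigma_0^2 d}{2}\sum_{\nu\in\spec(A)}f(\nu),\qquad f(\nu)=\frac{1}{\alpha_{\mathrm{self}}+\gamma\nu},
\]
and $f$ is convex on the stable range. Both spectra contain the Perron value $1$. Because $\Tr A=0$, the remaining $K-1$ eigenvalues sum to $-1$ in each case. For the complete graph these remaining eigenvalues are all equal to their mean $-1/(K-1)$. Jensen's inequality for the convex $f$ therefore gives $\sum f$ over the complete graph $\le\sum f$ over the ring.

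\textbf{Ring versus star: the main obstacle.} The star is non-normal, so I would first use Schur's inequality $\norm{B}_F^2\ge\sum_i\abs{\lambda_i(B)}^2$ with $B=e^{-Mt}$. This yields the lower bound
\[
\Tr\Sigma^{\mathrm{star}}_\infty\ge\frac{\sigma_0^2 d}{2}\sum_{\nu\in\{1,0^{(K-2)},-1\}}f(\nu),
\]
which is the right direction for the desired inequality.

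Comparing this bound with the ring spectrum is not a majorization argument, however. For even $K$ the ring spectrum contains $\pm1$ together with pairs $\pm\nu$, and convexity gives $f(\nu)+f(-\nu)\ge2f(0)$, which points the wrong way. The normal part of the bound alone therefore cannot close the gap. I would try two routes.
\begin{itemize}
\item[(a)] Solve the star's $K\times K$ Lyapunov equation explicitly, exploiting the arrowhead structure that reduces it to the hub, the spoke-average and the spoke-differences blocks. Then show that the non-normal excess, i.e.\ the departure from normality of $e^{-Mt}$, dominates the convexity deficit.
\item[(b)] Failing (a), weaken the claim to hold in the regime the corollary actually emphasises: $\gamma$ near $\gamma^*_{\mathrm{star}}$ (for odd $K$, where the ring threshold exceeds the star's), since the star term $f(-1)\to\infty$ there dominates every bounded ring term. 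I would then check numerically at $K=5$ whether the ordering holds on the whole common stable range.
\end{itemize}
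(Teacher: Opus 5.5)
The paper gives no argument for this corollary beyond the $K=5$ eigenvalue values and the ``because'' clause inside the statement. That clause takes the chain of $\abs{\lambda_{\min}(A)}$ values as implying the chain of stationary levels. You are right not to accept that inference. $\E_\pi[V]$ depends on the whole spectrum and, for the star, on non-normality. At $K=4$ the ring and the star both have spectrum $\{1,0,0,-1\}$, so $\lambda_{\min}$ cannot separate them at all.

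Several parts of your proposal are correct and already go beyond the paper:
\begin{itemize}
\item the threshold bookkeeping;
\item the even-$K$ caveat, to which you should add $K=3$, where the ring coincides with the complete graph and the first strict inequality also fails (so the strict chain of $\gamma^*$ holds exactly for odd $K\ge5$);
\item the reduction to $\Tr\Sigma_\infty$ via the topology-independent mean $\mathbf w_\infty$;
\item the Jensen comparison of the complete graph with the ring.
\end{itemize}

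The genuine gap is ring versus star. You leave it open, and fallback (b) would only prove a weaker statement. Route (a) in fact closes it, and cleanly.

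\textbf{Star.} Put $r=\gamma/\alpha_{\mathrm{self}}$; the common stable range is $r<1$, since $\gamma^*_{\mathrm{star}}=\alpha_{\mathrm{self}}$. Take the orthonormal pair $(e_0,s)$ with $s=(K-1)^{-1/2}\sum_{j\ge1}e_j$. On this pair $A$ acts as $\begin{pmatrix}0&(K-1)^{-1/2}\\(K-1)^{1/2}&0\end{pmatrix}$. Both $A$ and $A^\top$ vanish on the spoke-difference complement, so the Lyapunov equation splits into blocks. Solving the $2\times2$ block explicitly gives
\[
\Tr\Sigma^{\mathrm{star}}_\infty=\frac{\sigma_0^2d}{\alpha_{\mathrm{self}}}\Bigl[\frac K2+\frac{K^2}{4(K-1)}\,\frac{r^2}{1-r^2}\Bigr].
\]
Its excess over your Schur bound is $\frac{\sigma_0^2d}{\alpha_{\mathrm{self}}}\frac{(K-2)^2}{4(K-1)}\frac{r^2}{1-r^2}$.

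\textbf{Ring.} Let $c_m=\cos(2\pi m/K)$ and split $\frac{1}{1+rc}=\frac{1}{1-r^2c^2}-\frac{rc}{1-r^2c^2}$. Every power sum $\sum_m c_m^p=K2^{-p}\sum_{q:\,K\mid 2q-p}\binom{p}{q}$ is nonnegative. Hence $\sum_m\frac{rc_m}{1-r^2c_m^2}=\sum_{i\ge0}r^{2i+1}\sum_m c_m^{2i+1}\ge0$, and the odd part can be dropped. The map $t\mapsto(1-r^2t)^{-1}$ is convex on $[0,1]$, so its chord bound gives $\frac{1}{1-r^2c^2}\le1+\frac{r^2c^2}{1-r^2}$. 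Combining this with $\sum_mc_m^2=K/2$ for $K\ge3$ yields
\[
\Tr\Sigma^{\mathrm{ring}}_\infty=\frac{\sigma_0^2d}{2\alpha_{\mathrm{self}}}\sum_{m=0}^{K-1}\frac{1}{1+rc_m}\le\frac{\sigma_0^2d}{\alpha_{\mathrm{self}}}\Bigl[\frac K2+\frac K4\,\frac{r^2}{1-r^2}\Bigr]\le\Tr\Sigma^{\mathrm{star}}_\infty .
\]
The last step uses $K\le K^2/(K-1)$.

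So the non-normal excess does dominate the convexity deficit you identified. This holds for every $K\ge3$ on the whole common stable range, so no near-threshold restriction or numerical check is needed.
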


The ordering has a concrete governance reading: the \emph{topology} of interconnection is itself a control variable. A complete, densely diversified coupling tolerates the most shared adaptation before destabilizing, while a star---a single dominant hub on which all others depend---tolerates the least, so the same total coupling strength can be safe in one architecture and supercritical in another. Ensemble structure, not merely coupling magnitude, is therefore a lever the governance function can act on.

\section{Emergent Ensemble Risk}
\label{sec:emergent}

One of the most important motivations for the JLP over per-agent MRM
is that individual stability does not imply joint stability under
coordinated hidden drift.  We formalize this claim.

This is the rigorous form of the premise motivating ensemble-level governance---that a collection of individually sound models can nonetheless compose a fragile system---transposed to a population of self-adapting models.

\subsection{The Individual MRM Framework}

In standard per-agent MRM, each model $k$ is assigned a threshold
$\theta_{\mathrm{ind}} > 0$ and the monitoring rule is:
\emph{flag agent $k$ if $\bar{V}^k > \theta_{\mathrm{ind}}$}, where
$\bar{V}^k = \E_\pi[V^k(\W)]$ is the stationary per-agent Lyapunov
mean.  In the honest (no-drift) case,
$\theta_{\mathrm{ind}} = 2\,\E_\pi^{\mathrm{honest}}[V^k]$.

\subsection{Hidden Coordinated Drift}

\begin{theorem}[Emergent ensemble risk]
  \label{thm:emergent}
  Consider the decoupled system ($\gamma = 0$, $\Phi = 0$) with $K$ agents
  and suppose a hidden drift $h \in \R^d$ is added to every agent's
  dynamics, so the true drift of agent $k$ is
  $\mu^k(W^k) = -\alpha_{\mathrm{self}} W^k + h$.
  Define the honest noise floor
  $V^\infty_{\mathrm{hon}} = Kd\sigma_0^2/(4\alpha_{\mathrm{self}})$.

  If the drift magnitude satisfies
  \begin{equation}
    \frac{K}{2\alpha_{\mathrm{self}}^2}\norm{h}^2
    = \delta \cdot V^\infty_{\mathrm{hon}}
    \quad\text{with}\quad 0 < \delta < \frac{\theta_{\mathrm{ind}}}
    {V^\infty_{\mathrm{hon}} / K} - 1,
    \label{eq:h_condition}
  \end{equation}
  then:
  \begin{enumerate}[label=(\roman*)]
    \item No individual agent is flagged: for every $k$,
      $\E_\pi[V^k] < \theta_{\mathrm{ind}}$.
    \item The joint monitor flags the system:
      $V_{\mathrm{joint}} = \sum_k \E_\pi[V^k] >
      (1 + \delta/2) \cdot V^\infty_{\mathrm{hon}}$.
  \end{enumerate}
\end{theorem}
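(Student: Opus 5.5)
The plan is to compute the stationary law of the drifted system exactly and then read off both claims by direct comparison; no Lyapunov bounds are needed. With $\gamma=0$ and $\Phi=0$ the agents decouple, exactly as in the proof of Theorem~\ref{thm:noise_floor}. Each scalar coordinate now satisfies
\[
\dif W^k_{t,i}=(-\alpha_{\mathrm{self}}W^k_{t,i}+h_i)\,\dif t+\sigma_0\,\dif B^k_{t,i},
\]
which is a shifted Ornstein--Uhlenbeck process. Its unique stationary law is $\mathcal N\bigl(h_i/\alpha_{\mathrm{self}},\,\sigma_0^2/(2\alpha_{\mathrm{self}})\bigr)$. This follows either by substituting $Y=W-h/\alpha_{\mathrm{self}}$, which reduces to the centered case of Theorem~\ref{thm:noise_floor}, or from the mean/covariance equations in the proof of Theorem~\ref{thm:phase}(i) with $M=\alpha_{\mathrm{self}}I$. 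Using $\E[X^2]=\text{mean}^2+\text{variance}$ and summing over $i$ gives the key identity
\[
\E_\pi[V^k]=\tfrac12\Bigl(\tfrac{\norm{h}^2}{\alpha_{\mathrm{self}}^2}+\tfrac{d\sigma_0^2}{2\alpha_{\mathrm{self}}}\Bigr)
=\frac{V^\infty_{\mathrm{hon}}}{K}+\frac{\norm{h}^2}{2\alpha_{\mathrm{self}}^2}.
\]
This holds for every $k$, because the drift $h$ is common to all agents.

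Next I substitute the calibration \eqref{eq:h_condition}. It gives $\norm{h}^2/(2\alpha_{\mathrm{self}}^2)=\delta V^\infty_{\mathrm{hon}}/K$, so $\E_\pi[V^k]=(1+\delta)V^\infty_{\mathrm{hon}}/K$ for every $k$.

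For claim (i), the upper constraint $\delta<\theta_{\mathrm{ind}}K/V^\infty_{\mathrm{hon}}-1$ rearranges to $(1+\delta)V^\infty_{\mathrm{hon}}/K<\theta_{\mathrm{ind}}$. Hence no agent is flagged.

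For claim (ii), summing over $k$ gives $V_{\mathrm{joint}}=(1+\delta)V^\infty_{\mathrm{hon}}$. Since $\delta>0$ and $V^\infty_{\mathrm{hon}}>0$, this strictly exceeds $(1+\delta/2)V^\infty_{\mathrm{hon}}$.

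The only step needing care is the non-emptiness of the admissible $\delta$-interval, which is what makes the statement non-vacuous. This requires $\theta_{\mathrm{ind}}>V^\infty_{\mathrm{hon}}/K=\E_\pi^{\mathrm{honest}}[V^k]$. I would check it against the convention $\theta_{\mathrm{ind}}=2\,\E_\pi^{\mathrm{honest}}[V^k]$, under which the range is exactly $0<\delta<1$. Once the range is non-empty, any $\delta$ in it determines $\norm{h}$ via \eqref{eq:h_condition}, and any direction of $h$ works.

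Otherwise the argument is bookkeeping: confirming the shifted OU stationary mean $h/\alpha_{\mathrm{self}}$, and keeping the factor $\tfrac12$ in $V^k$ consistent with the definition of $V^\infty_{\mathrm{hon}}$.
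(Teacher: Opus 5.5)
Your proof is correct and follows essentially the same route as the paper: compute the shifted Ornstein--Uhlenbeck stationary law $\mathcal N\bigl(h_i/\alpha_{\mathrm{self}},\,\sigma_0^2/(2\alpha_{\mathrm{self}})\bigr)$, obtain $\E_\pi[V^k]=(1+\delta)V^\infty_{\mathrm{hon}}/K$ from the calibration, and compare against the two thresholds. Your part (i) is slightly cleaner than the paper's. You rearrange the stated upper bound on $\delta$ directly, which works for a general $\theta_{\mathrm{ind}}$, whereas the paper's proof specializes to the convention $\theta_{\mathrm{ind}}=2V^\infty_{\mathrm{hon}}/K$ and so reduces to $\delta<1$.
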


\begin{proof}
  \textbf{Stationary distribution under drift.}
  With hidden drift $h$, the $i$-th component of agent $k$'s weights
  evolves as
  \[
    \dif W^k_{t,i} = (-\alpha_{\mathrm{self}} W^k_{t,i} + h_i)\,\dif t
    + \sigma_0\,\dif B^k_{t,i}.
  \]
  This is an OU process with mean $h_i/\alpha_{\mathrm{self}}$, with
  stationary distribution
  \[
    \mathcal{N}\!\left(h_i/\alpha_{\mathrm{self}},\; \sigma_0^2/(2\alpha_{\mathrm{self}})\right).
  \]

  \textbf{Stationary per-agent second moment.}
  \[
    \E_\pi[V^k] = \frac{1}{2}\sum_{i=1}^d \E_\pi[(W^k_i)^2]
    = \frac{1}{2}\sum_{i=1}^d \left[\frac{h_i^2}{\alpha_{\mathrm{self}}^2}
    + \frac{\sigma_0^2}{2\alpha_{\mathrm{self}}}\right]
    = \frac{\norm{h}^2}{2\alpha_{\mathrm{self}}^2}
    + \frac{d\sigma_0^2}{4\alpha_{\mathrm{self}}}.
  \]

  \textbf{Part (i): Individual check.}
  The per-agent honest mean is
  $\E_\pi^{\mathrm{hon}}[V^k] = d\sigma_0^2/(4\alpha_{\mathrm{self}})
  = V^\infty_{\mathrm{hon}}/K$.
  The individual threshold is $\theta_{\mathrm{ind}} = 2V^\infty_{\mathrm{hon}}/K$.
  Agent $k$ is flagged only if
  \[
    \E_\pi[V^k] = \frac{\norm{h}^2}{2\alpha_{\mathrm{self}}^2}
    + \frac{V^\infty_{\mathrm{hon}}}{K}
    > \frac{2V^\infty_{\mathrm{hon}}}{K},
  \]
  i.e., if $\norm{h}^2 > 2\alpha_{\mathrm{self}}^2 V^\infty_{\mathrm{hon}} / K$.
  By condition \eqref{eq:h_condition},
  $\norm{h}^2 = 2\delta\alpha_{\mathrm{self}}^2 V^\infty_{\mathrm{hon}} / K$,
  and the right-hand condition requires
  $\delta < (\theta_{\mathrm{ind}}/V^\infty_{\mathrm{hon}})\cdot K - 1
  = 1$.  Under this condition, no agent is individually flagged.

  \textbf{Part (ii): Joint check.}
  The joint stationary value is
  \[
    V_{\mathrm{joint}} = K \cdot \E_\pi[V^k]
    = \frac{K\norm{h}^2}{2\alpha_{\mathrm{self}}^2}
    + V^\infty_{\mathrm{hon}}.
  \]
  With the choice of $h$ in \eqref{eq:h_condition},
  the first term equals $\delta V^\infty_{\mathrm{hon}}$, so
  $V_{\mathrm{joint}} = (1 + \delta) V^\infty_{\mathrm{hon}}$.
  The joint threshold is $(1+\delta/2) V^\infty_{\mathrm{hon}} <
  (1+\delta) V^\infty_{\mathrm{hon}} = V_{\mathrm{joint}}$, so the
  joint monitor fires.
\end{proof}

\begin{remark}
  Theorem~\ref{thm:emergent} establishes an exact window $\delta \in
  (0, 1)$ in which hidden coordinated drift is invisible
  to individual MRM but detectable by the JLP.  The width of the window in
  $\delta$ does not depend on $K$; what improves with $K$ is the precision of
  the joint estimate, since $V_{\mathrm{joint}}$ averages the same coordinated
  signal over more agents.  Throughout the window $\norm{h}$ is small relative
  to $\sigma_0$: each agent's drift stays below its individual noise floor
  while the collective signal exceeds the joint threshold.  Numerical Study~C5 exhibits this phenomenon
  concretely with $K=10$, $h_{\mathrm{per\,dim}} = 0.00935$,
  $\sigma_0 = 0.05$, confirming that $0/10$ agents are individually
  flagged while the joint check fires.

In governance terms this is the paper's central cautionary result: a validation regime that certifies each model in isolation can pass an entire population while the ensemble they constitute drifts together past its joint tolerance. The joint monitor that closes the gap is an aggregate of the very same per-model Lyapunov quantities a per-model regime already computes---what changes is not the data collected but that it is assessed \emph{jointly}, at the level of the ensemble.
\end{remark}

\section{Zero-Knowledge Proof Construction}
\label{sec:zk}

\subsection{What is Worth Attesting}

A first instinct is to have the firm prove, at each epoch $\tau_n$, the
pointwise generator inequality $\Lgen V(\W_t) < -\alpha V(\W_t)+\beta$ at
the live weights $\W_t$, in zero knowledge.  This is the wrong target, for
a reason that is structural rather than cryptographic.

\begin{proposition}[The per-epoch weight proof is vacuous]
\label{prop:tautology}
Fix coefficients with $\gamma<\gamma_{\mathrm{cert}}(A)$ and let
$\alpha=\alpha_{\mathrm{eff}}$, $\beta$ be as in Theorem~\ref{thm:stability}.
Then:
\begin{enumerate}[label=(\roman*)]
  \item \emph{(Tautology.)} The inequality $\Lgen V(\W)<-\alpha V(\W)+\beta$
    holds at \emph{every} $\W\in\R^{dK}$.  A proof that it holds at the
    particular live state $\W_t$ therefore conveys no information beyond
    what the publicly-declared coefficients already imply.
  \item \emph{(Perverse monotonicity.)} Write the exact generator
    \eqref{eq:LV_exact} as $\Lgen V(\W)=-2\alpha_{\mathrm{self}}V(\W)+R(\W)$
    with $R$ at most quadratic and dominated by $-2\alpha_{\mathrm{self}}V$
    for large $\norm{\W}$.  Then $\Lgen V(\W)+\alpha V(\W)-\beta\to-\infty$
    as $\norm{\W}\to\infty$: the check is satisfied with ever-greater slack
    the larger the weights grow.  A system whose weights have blown up
    passes the ``stability'' test most comfortably of all.
\end{enumerate}
\end{proposition}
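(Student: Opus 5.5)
The plan is to get both parts from one explicit upper bound on the slack function
\[
  S(\W) := \Lgen V(\W) + \alpha V(\W) - \beta ,
\]
built from the exact generator formula of Theorem~\ref{thm:generator} and the coupling bound of Lemma~\ref{lem:coupling_bound}. Part~(i) amounts to showing $S<0$ everywhere, and part~(ii) to showing $S\to-\infty$ radially. The key point for part~(ii) is that $\alpha=\alpha_{\mathrm{eff}}$ is chosen strictly smaller than the true quadratic dissipation rate. As a result, $S$ carries a leftover negative quadratic term of size $\varepsilon_0 V$ that no linear term can overcome.

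\emph{Step 1 (bounding $S$).} Write $\lambda:=\abs{\lambda_{\min}(A_{\mathrm{sym}})}$ and $b:=\gamma\Phi_{\max}\sqrt K$. Theorem~\ref{thm:generator} and Lemma~\ref{lem:coupling_bound}, with $\norm{\W}^2=2V$, give
\[
  \Lgen V \le -2(\alpha_{\mathrm{self}}-\gamma\lambda)V + b\norm{\W} + \tfrac12 Kd\sigma_0^2 .
\]
This holds even when $\lambda_{\min}(A_{\mathrm{sym}})\ge 0$, since the quadratic term is then nonpositive. Substituting $\alpha=2(\alpha_{\mathrm{self}}-\gamma\lambda)-\varepsilon_0$ yields
\[
  S(\W) \le -\tfrac{\varepsilon_0}{2}\norm{\W}^2 + b\norm{\W} - \bigl(\beta-\tfrac12 Kd\sigma_0^2\bigr).
\]
The right-hand side is a downward parabola in $x=\norm{\W}$, and the decisive feature is its leading coefficient $-\varepsilon_0/2<0$.

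\emph{Step 2 (part~(i)).} The maximum of the parabola is $b^2/(2\varepsilon_0)-(\beta-\tfrac12 Kd\sigma_0^2)$. To make this nonpositive uniformly in $\W$, the constant in $\beta$ must be at least $b^2/(2\varepsilon_0)$.

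This bookkeeping is where I expect the main obstacle. The renaming ``$2\varepsilon\to\varepsilon$'' in Corollary~\ref{cor:gen_upper} is not consistent with keeping $\gamma^2\Phi_{\max}^2K/(4\varepsilon_0)$ as the constant. Taken literally, that constant leaves a positive residual $b^2/(4\varepsilon_0)$ at $x=b/\varepsilon_0$ whenever $\Phi_{\max}>0$. Theorem~\ref{thm:stability} is itself stated with this $\beta$, so the issue is a gap in that earlier result rather than something the proposition introduces.

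I would therefore first redo Young's inequality with parameter $\varepsilon_0/2$. This gives the constant $\gamma^2\Phi_{\max}^2K/(2\varepsilon_0)$, and I would treat that as the $\beta$ of Theorem~\ref{thm:stability}, noting the correction explicitly. The only cost is a larger noise constant, which does not affect the qualitative claims. With this $\beta$ the maximum of the parabola is at most $0$, so $S\le 0$ everywhere.

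The statement asks for a strict inequality. The parabola bound is attained only where the Young and Rayleigh steps are simultaneously tight, so strictness needs separate handling. The first thing I would try is to inflate $\beta$ by an arbitrarily small $\eta>0$, or equivalently to state the declared constant with slack. This gives $S\le-\eta<0$ at every $\W$.

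The informational content then follows directly. The predicate ``$S(\W_t)<0$'' is true for every witness $\W_t$, so it is determined by the public coefficients $(A,\alpha_{\mathrm{self}},\gamma,\sigma_0,\Phi_{\max},K,d)$ alone. A proof of it therefore reveals nothing about $\W_t$ beyond the fact that it is a point of $\R^{dK}$.

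\emph{Step 3 (part~(ii)).} This follows from the same parabola bound. The residual $R(\W)=\gamma\sum_k\ip{W^k}{\Phi}-\gamma\,\mathbf w^\top(A_{\mathrm{sym}}\otimes I_d)\mathbf w$ is bounded by $2\gamma\lambda V+b\norm{\W}$. Hence
\[
  S(\W)\le-\tfrac{\varepsilon_0}{2}\norm{\W}^2+b\norm{\W}+\mathrm{const},
\]
which tends to $-\infty$ as $\norm{\W}\to\infty$, uniformly in direction. This also makes precise the statement's hypothesis that $R$ is dominated by $-2\alpha_{\mathrm{self}}V$: the domination holds with margin $\varepsilon_0 V$ thanks to $\gamma<\gamma_{\mathrm{cert}}(A)$. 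It follows that larger weights produce more negative slack, which is the perverse monotonicity claimed in part~(ii).
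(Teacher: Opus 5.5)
Your argument is correct and follows the paper's route: part~(i) is the all-$\W$ Foster--Lyapunov bound behind Theorem~\ref{thm:stability}, and part~(ii) bounds $R$ via Lemma~\ref{lem:coupling_bound} to leave a negative quadratic, which you sharpen to exactly $-\varepsilon_0 V$ where the paper only appeals to $\alpha_{\mathrm{eff}}<2\alpha_{\mathrm{self}}$ plus an unspecified $O(\gamma)$. Your correction of the constant to $\gamma^2\Phi_{\max}^2K/(2\varepsilon_0)$ is genuinely needed: with the paper's $\beta$ the inequality fails, e.g.\ for the $K=5$ complete graph with $\alpha_{\mathrm{self}}=0.1$, $\gamma=0.05$, $\varepsilon_0=0.15$, $\norm{\Phi}=\Phi_{\max}$, at $W^k=(0.05/0.275)\,\Phi$ for all $k$; and once the constant is corrected, strictness is automatic whenever $\gamma\Phi_{\max}>0$ (the Cauchy--Schwarz step is tight only along $\mathbf 1_K\otimes\Phi$, whose $A_{\mathrm{sym}}$-Rayleigh quotient is $1\neq\lambda_{\min}(A_{\mathrm{sym}})$), so your $\eta$-slack is required only when $\gamma\Phi_{\max}=0$, where $\Lgen V(\mathbf 0)=\beta$ and the strict inequality really does fail at the origin.
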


\begin{proof}
(i) is exactly Theorem~\ref{thm:stability}: the Foster--Lyapunov inequality
was proved for all $\W$, so it is not a property of any particular $\W_t$.
For (ii), by \eqref{eq:LV_exact} and Lemma~\ref{lem:coupling_bound},
$R(\W)\le\gamma\Phi_{\max}\sqrt K\norm{\W}-2\gamma\lambda_{\min}(A_{\mathrm{sym}})V(\W)+\tfrac12Kd\sigma_0^2$,
which is linear-plus-constant-times-$V$; against the $-2\alpha_{\mathrm{self}}V$
term, $\Lgen V(\W)+\alpha V(\W)-\beta=-(2\alpha_{\mathrm{self}}-\alpha+O(\gamma))V(\W)+O(\norm{\W})\to-\infty$
since $\alpha=\alpha_{\mathrm{eff}}<2\alpha_{\mathrm{self}}$.
\end{proof}

The diagnosis is that pointwise stability of $V$ is not where the risk
lives.  By Theorem~\ref{thm:phase} the system is stable or unstable
according to whether $\gamma<\gamma^*(A)=\alpha_{\mathrm{self}}/\abs{\lambda_{\min}(A)}$---a
condition on the \emph{declared parameters} $(A,\alpha_{\mathrm{self}},\gamma)$,
involving no live weights at all.  The object worth attesting is this
static spectral certificate, established once per model-configuration change
rather than at every epoch.

\subsection{The Spectral Certificate and Its Circuit}

If the topology $A$ is not itself confidential, the certificate
$\gamma<\alpha_{\mathrm{self}}/\abs{\lambda_{\min}(A)}$ is checked directly
from the declared model documentation and needs no proof system.  The ZK
machinery becomes relevant only when $A$ (or $\gamma$) is proprietary.  In
that case the firm commits to $A$ and proves, in zero knowledge, a
\emph{spectral} statement about the committed matrix:
\begin{equation}
  \pi_{\mathrm{JLP}} = \mathrm{ZK\text{-}SNARK}\bigl\{
    (A,\gamma) :\;
    \alpha_{\mathrm{self}} I_K + \gamma A_{\mathrm{sym}} \succ 0
    \;\wedge\;
    \mathrm{Hash}(A,\gamma,\mathtt{Key}) = H
  \bigr\},
  \label{eq:zk_statement}
\end{equation}
where $A_{\mathrm{sym}}=\tfrac12(A+A^\top)$.  The positive-definiteness
$\alpha_{\mathrm{self}} I + \gamma A_{\mathrm{sym}}\succ0$ is equivalent to
$\gamma<\gamma_{\mathrm{cert}}(A)$, the safe (conservative) certificate; it
is the natural object to prove because it is decidable by a Cholesky
factorization, which a circuit can verify without computing eigenvalues.
(Attesting the exact $\gamma<\gamma^*(A)$ instead requires bounding
$\lambda_{\min}(A)$ of the committed, possibly non-symmetric $A$; we use the
conservative certificate precisely because it is SNARK-friendly and never
certifies an unstable system.)  Crucially, \eqref{eq:zk_statement} has
\emph{no $\W_t$ argument}: it is proved once when the firm declares or
changes its coupling configuration, not at every monitoring epoch.

The weight-commitment clause $\mathrm{Hash}(\cdot)=H$ is retained, but its
role is now correctly scoped: it provides tamper-evidence and
non-repudiation for the declared \emph{configuration} $(A,\gamma)$, and is
orthogonal to stability.  A separate, genuinely state-dependent monitoring
signal---if one is wanted at each epoch---is whether the realized Lyapunov
value $V(\W_{\tau_n})$ stays within the stationary envelope
$\beta/\alpha_{\mathrm{eff}}$ of \eqref{eq:stationary_bound}; that is a
meaningful per-epoch check because it can fail, unlike the pointwise
generator inequality.

\subsection{Circuit for the Cholesky Certificate}

The statement \eqref{eq:zk_statement} is converted to an arithmetic
circuit over a prime field $\mathbb{F}_p$ as follows.  The witness is the
committed matrix $A$ (equivalently its symmetric part $S:=\alpha_{\mathrm{self}}I_K+\gamma A_{\mathrm{sym}}$)
together with a claimed Cholesky factor $L$; there are no model weights in
the circuit.

\paragraph{Step 1: Form $S=\alpha_{\mathrm{self}}I_K+\gamma A_{\mathrm{sym}}$.}
From the committed entries of $A$, compute $A_{\mathrm{sym}}=\tfrac12(A+A^\top)$
and $S$.  This is $O(K^2)$ field additions and scalar multiplications;
$\alpha_{\mathrm{self}},\gamma$ are public.

\paragraph{Step 2: Verify the Cholesky factorization $S=LL^\top$.}
The prover supplies a lower-triangular $L$ as private witness; the circuit
checks $LL^\top=S$ entrywise ($O(K^3)$ multiplications) and that each
diagonal entry $L_{ii}$ is nonzero.  A real Cholesky factor with nonzero
(hence, by the field encoding, positive) diagonal exists iff $S\succ0$, i.e.\
iff $\gamma<\gamma_{\mathrm{cert}}(A)$.  This replaces an eigenvalue
computation, which is not circuit-friendly, by a verification of an
algebraic identity.

\paragraph{Step 3: Range-check the diagonal.}
The circuit checks each $L_{ii}^2>0$ in the signed embedding, certifying
strict positivity of the pivots and hence $S\succ0$.

\paragraph{Step 4: Hash check.}
A Poseidon hash $H(A,\gamma,\mathtt{Key})$ over $\mathbb{F}_p$ binds the
committed configuration to the ledger entry.  Poseidon is SNARK-friendly
with low gate count \cite{grassi2021poseidon}.  This clause provides
tamper-evidence for $(A,\gamma)$ and is independent of the stability claim.

\subsection{Security Properties}

The three properties below are what make the certificate usable as governance reporting. Soundness means a model owner cannot fake compliance with the ensemble-stability threshold; completeness means a genuinely compliant owner can always demonstrate it; and zero-knowledge means it can do so \emph{without disclosing its proprietary coupling matrix}. Together they resolve the central tension in collecting ensemble-level information: the validation function learns whether each model sits on the stable side of $\gamma^*$, and can aggregate these attestations, without any party pooling the sensitive models whose very concentration would create a new exposure.

\begin{proposition}[Soundness and completeness of $\pi_{\mathrm{JLP}}$]
  \label{prop:zk}
  Let $\mathcal{R}$ be the NP relation defined by
  statement~\eqref{eq:zk_statement}: a pair $(A,\gamma)$ with a valid
  witness $L$ such that $LL^\top=\alpha_{\mathrm{self}}I_K+\gamma A_{\mathrm{sym}}$
  has strictly positive diagonal, together with the hash binding.  A
  ZK-SNARK with the following properties can be instantiated using the
  Groth16 construction \cite{groth2016size}:
  \begin{enumerate}[label=(\roman*)]
    \item \textbf{Completeness.} If $\gamma<\gamma_{\mathrm{cert}}(A)$ the
      certifying Cholesky factor exists and the prover can always generate a
      valid proof $\pi_{\mathrm{JLP}}$.
    \item \textbf{Soundness.} With overwhelming probability, no
      polynomial-time prover can produce a valid $\pi_{\mathrm{JLP}}$ when
      $\alpha_{\mathrm{self}}I+\gamma A_{\mathrm{sym}}\not\succ0$ (no real
      Cholesky factor with positive diagonal exists), i.e.\ the certificate
      cannot be forged for an uncertified configuration.
    \item \textbf{Zero-knowledge.} The proof reveals no information about
      the committed $A$ beyond the membership statement.
  \end{enumerate}
\end{proposition}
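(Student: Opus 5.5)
The proof reduces to two ingredients: the generic guarantees of Groth16 for any NP relation compiled to a rank-1 constraint system, and a purely linear-algebraic fact that the circuit of Steps 1--4 decides exactly the relation $\mathcal{R}$. The plan is to establish the algebraic fact first and then transfer the three properties one by one. I assume a trusted setup (or MPC ceremony) and the knowledge-of-exponent / generic-group assumptions under which Groth16 is proved knowledge-sound and perfectly zero-knowledge. Together with collision resistance of Poseidon, these constitute the ``with overwhelming probability'' qualifier in (ii).

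\textbf{Step 1 (the relation is correctly characterized).} Over $\R$, a symmetric $S$ satisfies $S\succ0$ iff $S=LL^\top$ for some lower-triangular $L$ with $L_{ii}>0$. This is the standard Cholesky theorem: one direction is the existence and uniqueness of the factor, and for the converse, $x^\top S x=\norm{L^\top x}^2>0$ for $x\neq0$ because $L$ is invertible. Applying this to $S=\alpha_{\mathrm{self}}I_K+\gamma A_{\mathrm{sym}}$, whose eigenvalues are $\alpha_{\mathrm{self}}+\gamma\lambda$ for $\lambda\in\spec(A_{\mathrm{sym}})$, shows $S\succ0$ iff $\gamma<\gamma_{\mathrm{cert}}(A)$ (with $\gamma_{\mathrm{cert}}=+\infty$ when $\lambda_{\min}(A_{\mathrm{sym}})\ge0$). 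This step is routine.

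\textbf{Step 2 (completeness).} If $\gamma<\gamma_{\mathrm{cert}}(A)$, the prover computes the Cholesky factor $L$ and the hash opening, so every constraint in Steps 1--4 is satisfied. Completeness of Groth16 then yields an accepting proof. Zero-knowledge (iii) is inherited directly: Groth16 admits a simulator that, given only the public inputs $(\alpha_{\mathrm{self}},H)$ and the trapdoor, produces proofs distributed identically to honest ones. The proof therefore leaks nothing about $A$ or $L$ beyond membership in $\mathcal{R}$. The commitment $H$ is a public input published by design, and I treat it as hiding through $\mathtt{Key}$.

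\textbf{Step 3 (soundness), the main obstacle.} Knowledge soundness of Groth16 gives an extractor that recovers a witness $(A,\gamma,L,\mathtt{Key})$ satisfying the circuit \emph{over $\mathbb{F}_p$}. The difficulty is lifting this to a real witness, since $LL^\top=S$ in $\mathbb{F}_p$ with ``positive'' diagonal says nothing about $\R$ unless the arithmetic is controlled. My approach is to fix a fixed-point encoding with scale $2^f$ and require range checks bounding every entry of $A$ (implied by Assumption~\ref{ass:A}, since its entries lie in $[0,1]$) and of $L$ (since $\norm{L_{i\cdot}}^2=S_{ii}\le\alpha_{\mathrm{self}}+\gamma$). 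These checks should be added to Step 2 of the circuit. With entries bounded by $B$ in magnitude, every product and sum in $LL^\top$ has magnitude below $KB^2 2^{2f}<p/2$ for $p$ of cryptographic size. There is then no wraparound, so the field identity is an integer identity, hence an identity over $\R$ after rescaling. The signed-embedding range check in Step 3 of the circuit then gives $L_{ii}>0$ as rationals, and Step 1 yields $S\succ0$. I expect this to need a careful statement in the proof. I would also note explicitly that the plain ``$L_{ii}\neq0$'' check of the circuit is insufficient without the bound checks, and that exact equality forces either a tolerance or a certificate with a small margin $S-\epsilon I=LL^\top$ to accommodate rounding. Finally, collision resistance of Poseidon ensures the extracted $(A,\gamma)$ is the committed configuration: a different opening of $H$ would be a collision, which occurs with negligible probability. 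Combining the extractor with this lifting argument gives (ii).
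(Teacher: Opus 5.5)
Your argument is correct in substance. It shares the paper's skeleton: Groth16's generic completeness, knowledge soundness and zero-knowledge for the R1CS, plus the real Cholesky characterization of $S\succ0$. The difference is your Step~3, which supplies a lemma the paper's one-line reduction needs but never proves.

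The paper asserts that soundness ``reduces to the fact that $S\succ0$ iff a Cholesky factor with positive diagonal exists.'' That is a fact about $\R$, whereas the extractor only returns $LL^\top=S$ over $\mathbb{F}_p$. With only the checks $L_{ii}\neq0$ and ``$L_{ii}^2>0$ in the signed embedding,'' the circuit accepts forgeries. Take $p\equiv1\pmod 4$, which holds for the usual Groth16 scalar fields, and the indefinite matrix $S=\bigl(\begin{smallmatrix}4&5\\5&4\end{smallmatrix}\bigr)$. This is the $K=2$ shape $\alpha_{\mathrm{self}}I+\gamma A_{\mathrm{sym}}$ with $\gamma>\alpha_{\mathrm{self}}$, so it is uncertified. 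Over $\mathbb{F}_p$ it equals $LL^\top$ with $L_{11}=2$, $L_{21}=5\cdot2^{-1}$, $L_{22}=\tfrac{3}{2}\sqrt{-1}$, and $L_{22}^2\equiv(p-9)/4$ is positive in the signed embedding.

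Your range checks and no-wraparound bound are exactly what lift the field identity to an integer, hence real, identity. One detail to add: clear the $\tfrac{1}{2}$ in $A_{\mathrm{sym}}$, for example by checking $2LL^\top=2\alpha_{\mathrm{self}}I+\gamma(A+A^\top)$ at matching scales, since $2^{-1}\in\mathbb{F}_p$ is not a small integer. After lifting, $L_{ii}\neq0$ alone already gives $x^\top Sx=\norm{L^\top x}^2>0$ for $x\neq0$. So it is the bound checks, not the sign check, that carry soundness.

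The cost, which your Step~2 does not yet absorb, is the unconditional ``always'' in (i). Real Cholesky factors are generally irrational. With a margin or tolerance, an honest prover can therefore certify only configurations with $\lambda_{\min}(S)$ above a precision-dependent threshold. The fix is an exact rational certificate: $S=LDL^\top$ with unit-triangular $L$ and $D_{ii}>0$ over cleared denominators, equivalently positive leading principal minors. This restores exact completeness at the price of larger integer ranges.

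In short, the paper's route buys brevity. Yours buys a circuit that is actually sound, and it states the assumptions on $H$ that (ii) and (iii) rely on but the paper never makes explicit: binding via Poseidon collision resistance and hiding via $\mathtt{Key}$.
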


\begin{proof}
  All three properties follow from the Groth16 construction for rank-1
  constraint systems (R1CS), the standard encoding for the arithmetic
  circuit above.  The circuit size is $O(K^3)$ constraints (dominated by the
  Cholesky-identity check), polynomial in $K$ and independent of the model
  dimension $d$ and of the live weights, and hence admits an efficient
  prover.  Soundness of the algebraic check reduces to the fact that
  $S\succ0$ iff a Cholesky factor with positive diagonal exists; we refer to
  \cite{groth2016size} for the formal SNARK security proof under the generic
  group model.
\end{proof}

\paragraph{Governance ledger integration.}
When the firm declares or changes its coupling configuration, it:
(a) commits to $(A,\gamma)$ on the ledger;
(b) generates $\pi_{\mathrm{JLP}}$ proving the spectral certificate and the
hash binding;
(c) submits $(\pi_{\mathrm{JLP}}, H)$ to the governance ledger.
The validator runs the polynomial-time verifier $\mathcal{V}$, which accepts
or rejects without ever seeing $A$.  Per-epoch telemetry, if required,
reports only whether $V(\W_{\tau_n})$ remains within the declared envelope
$\beta/\alpha_{\mathrm{eff}}$---a check that is state-dependent and can
genuinely fail, unlike the pointwise generator inequality
(Proposition~\ref{prop:tautology}).

\section{MRM and Generative AI Model Governance}
\label{sec:sr117}

\subsection{Background: The MRM Framework}

Traditional MRM (\cite{sr117}) is organized around three pillars: \emph{Model Development and
Implementation}, \emph{Model Validation}, and \emph{Ongoing Monitoring
and Governance}.  The guiding principles of MRM can profitably be applied to situations involving AI-generated content that influences
a decision. However, the practical implementation and instantiation of MRM principles to GenAI use cases is where the complexity lives. 

\subsection{Why Standard MRM Is Insufficient for GenAI}

Classical MRM was designed for static or infrequently-updated
quantitative models.  Its three pillars assume:
(a) model parameters are fixed between formal validation events;
(b) models operate in isolation with well-defined input--output interfaces;
(c) model performance is measured against historical back-test data.

Generative AI models violate all three assumptions simultaneously.
First, their parameters update continuously through reinforcement
learning from human feedback (RLHF), retrieval-augmented generation
(RAG) updates, and meta-learning loops.  Second, in multi-agent
deployments---common in large institutions that operate distinct models for
disparate use cases---models
share embeddings, fine-tuning data, and inference infrastructure,
creating de facto coupling that MRM per-model view ignores.
Third, generative model performance is multi-dimensional (accuracy,
fairness, calibration, hallucination rate) and cannot be fully
characterized by a single scalar loss measured against a static
validation set.

The JLP addresses this gap by providing a continuous-time, system-level
stability attestation that maps directly onto each MRM pillar.

\subsection{Mapping the JLP onto MRM Pillars}

\paragraph{Pillar 1: Conceptual Soundness.}

MRM requires that a model's theoretical foundations be
``well-founded in published research or widespread industry practice''
and that ``key assumptions limiting the model's applicability or
accuracy'' be documented (\cite{sr117}).

Under the JLP, the choice of the joint Lyapunov function
$V(\W) = \tfrac{1}{2}\sum_k \norm{W^k}^2$ must be justified at
initial validation.  Specifically, the firm must demonstrate:

\begin{enumerate}[label=(\roman*)]
  \item That $V$ is a meaningful proxy for ensemble-level model risk:
    large $V$ indicates that aggregate model weights have drifted far
    from their validated origin, which corresponds to distributional
    shift in the models' outputs.

  \item That the coupling matrix $A$ accurately reflects the
    institution's actual model interaction topology (shared fine-tuning
    datasets, shared embedding layers, common RLHF reward signals).

  \item That the declared parameters $\alpha_{\mathrm{self}}$,
    $\sigma_0$, $\gamma$, and $\Phi$ are calibrated to observed weight
    dynamics during a supervised validation period, not set
    arbitrarily.

  \item That the critical threshold $\gamma^*(A)$ derived in
    Theorem~\ref{thm:phase} has been computed for the declared topology
    and that the institution's operational $\gamma$ satisfies
    $\gamma < \gamma^*(A)$ with a documented safety margin.
\end{enumerate}

This maps directly to the MRM requirement that model
developers should be able to explain why a particular
mathematical approach was selected and what its limitations are.

\paragraph{Pillar 2: Ongoing Monitoring.}

MRM requires that models be monitored continuously against
performance benchmarks and that deviations trigger escalation
procedures.  

The JLP provides a natural and continuous monitoring framework.
At each epoch $\tau_n$ (e.g., daily), the firm submits a ZK proof
$\pi_{\mathrm{JLP}}$ that
$\Lgen V(\W_{\tau_n}) < -\alpha_{\mathrm{eff}} V(\W_{\tau_n}) + \beta$.
This is equivalent to certifying that the joint system is within its
stability envelope as characterized by Theorem~\ref{thm:stability}.
The key features of this monitoring framework are:

\begin{enumerate}[label=(\roman*)]
  \item \textbf{Continuous attestation:} unlike annual validation
    exercises, the JLP provides daily cryptographic attestations stored
    on the governance ledger, creating an audit-ready tamper-evident
    record.

  \item \textbf{Early warning:} the sequence
    $\{V(\W_{\tau_n})\}_{n=0}^{\infty}$ forms a real-valued time series
    whose trend can be monitored with standard process-control methods
    (CUSUM, EWMA).  An upward trend in $V$ signals incipient
    instability before the stability bound is formally violated.

  \item \textbf{Granular decomposition:} the exact generator formula
    \eqref{eq:LV_exact} decomposes the total risk into self-decay
    (first term), coupling (second term), and noise (third term),
    allowing examiners to identify which component is driving elevated
    $V$.

  \item \textbf{Noise floor calibration:} Theorem~\ref{thm:noise_floor}
    provides a closed-form baseline $\E[V_\infty] = \beta_0/(2\alpha_{\mathrm{self}})$
    against which current $V$ can be compared, making the monitoring
    threshold explicit and auditable.
\end{enumerate}

\paragraph{Pillar 3: Model Validation.}

MRM requires independent model validation (MV) by a function
separate from model development.  The guidance specifies three
components of validation: evaluation of conceptual soundness,
ongoing monitoring, and outcomes analysis.

The JLP enhances validation in the multi-agent context in three ways.
First, the MV function can independently verify the topology declaration
by examining shared infrastructure: fine-tuning pipelines, embedding
layers, and reward-signal configurations.  Second, the historical
sequence of $\pi_{\mathrm{JLP}}$ proofs on the ledger provides an
objective record for outcomes analysis---the MV function checks not
only that proofs were submitted but that the underlying $V$ trajectory
was consistent with the declared dynamics parameters.  Third, the
governance injection probe described in Section~\ref{sec:numerical}
(Numerical Study~C4) provides a \emph{challenge procedure}: the
examiner injects a known perturbation $\Delta$ to the meta-parameter
$\Phi_t$ and verifies that the JLP telemetry responds as predicted by
the certified dynamics within the detection horizon.

\subsection{The Coupling Gap: A New MRM Risk Category}

We formalize a new MRM risk
category---the \emph{coupling gap}---defined as the margin by which the
destabilizing coupling exceeds the self-decay rate.  Formally, with
$\hat A_{\mathrm{sym}}=\tfrac12(\hat A+\hat A^\top)$:
\begin{equation}
  \Delta_{\mathrm{coup}} := \hat\gamma \cdot \abs{\lambda_{\min}(\hat A)} - \alpha_{\mathrm{self}}.
  \label{eq:coupling_gap}
\end{equation}
When $\Delta_{\mathrm{coup}} > 0$ the operational coupling has crossed the
exact threshold $\gamma^*(\hat A)=\alpha_{\mathrm{self}}/\abs{\lambda_{\min}(\hat A)}$
and the joint second moment diverges (Theorem~\ref{thm:phase}); individual
MRM is then completely inadequate.  A more conservative trigger, safe under
non-normality, fires when the Lyapunov certificate fails, i.e.\ when
$\hat\gamma\cdot\abs{\lambda_{\min}(\hat A_{\mathrm{sym}})}>\alpha_{\mathrm{self}}$
(equivalently $\alpha_{\mathrm{self}}I+\hat\gamma\hat A_{\mathrm{sym}}\not\succ0$);
this is the quantity the spectral SNARK of Section~\ref{sec:zk} attests.

\subsection{Governance Workflow Under the JLP}

We describe the end-to-end governance workflow for
$K$ generative AI models under MRM with JLP attestation.

\paragraph{Stage 1: Model Inventory and Topology Declaration.}
The model owner documents all $K$ AI models in scope, their
shared infrastructure, and the coupling adjacency matrix $A$.  This
is submitted as part of the model documentation required under
MRM Section~I.

\paragraph{Stage 2: Initial Parameter Calibration.}
During the initial validation period (e.g. 90--180 days), the
self-decay rate $\alpha_{\mathrm{self}}$, noise coefficient $\sigma_0$,
and coupling strength $\gamma$ are estimated from observed weight
trajectories using the moment-matching conditions of
Theorems~\ref{thm:noise_floor} and \ref{thm:stability}.  The critical
threshold $\gamma^*(A)$ is computed and documented.

\paragraph{Stage 3: Validation of the JLP Circuit.}
The model validation function independently audits the arithmetic
circuit described in Section~\ref{sec:zk}, verifying that the circuit
correctly encodes the stability condition from Theorem~\ref{thm:stability}.
This audit is analogous to the standard ``back-testing'' requirement but adapted to the ZK-SNARK context.

\paragraph{Stage 4: Ongoing JLP Submission.}
At each validation epoch $\tau_n$ (recommended: daily), the model
risk management system automatically:
(a) samples $\W_{\tau_n}$ from live model registries;
(b) computes $V(\W_{\tau_n})$ and $\Lgen V(\W_{\tau_n})$ via
Theorem~\ref{thm:generator};
(c) generates $\pi_{\mathrm{JLP}}$ and submits it with $H_{\tau_n}$
to the governance ledger.
A proof failure triggers an automatic escalation to the model risk
committee.

\paragraph{Stage 5: Challenge Procedure.}
Independent reviewers/red team challengers may, at any time, inject a documented perturbation
$\Delta_\Phi$ into the meta-parameter $\Phi_t$ and demand that the
firm's JLP telemetry reflect the perturbation within the predicted
detection horizon.  Failure to respond within $O(1/\gamma)$ steps
(see Numerical Study~C4) signals potential tampering with the
telemetry stream.

\subsection{Limitations and Open Questions}
\label{sec:limitations}

\paragraph{Linear dynamics, and homogeneity across agents.}
This is the strongest modeling restriction in the paper and it underlies every exact result, so we
state it first. Assumption~\ref{ass:loss} takes each agent's loss to be exactly the isotropic
quadratic $\loss_k(w)=\tfrac12\alpha_{\mathrm{self}}\norm{w}^2$, with the \emph{same}
$\alpha_{\mathrm{self}}$ for every agent, and the coupling in \eqref{eq:drift} is linear in $\W$
with additive, state-independent noise. The joint process is therefore a linear multivariate
Ornstein--Uhlenbeck process. That is precisely why the analysis closes in the form it does: the exact
threshold $\gamma^*(A)=\alpha_{\mathrm{self}}/\abs{\lambda_{\min}(A)}$ is the point at which an
eigenvalue of a constant drift matrix crosses zero; the noise floor
$\E_\pi[V]=Kd\sigma_0^2/(4\alpha_{\mathrm{self}})$ is the stationary second moment of a Gaussian; and
the spectral certificate is a statement about a fixed symmetric matrix. None of these survives
unchanged for a non-convex, anisotropic, or agent-heterogeneous loss landscape --- which is what the
loss surfaces of actual generative models are.

Three consequences deserve emphasis. First, $\alpha_{\mathrm{self}}$ is not a property one reads off
a deployed model; it is the curvature of an assumed quadratic, and for a real system it would have to
be estimated locally, with the results holding only in whatever neighborhood that local
approximation is credible. Second, heterogeneous $\alpha_{\mathrm{self}}^{(k)}$ would replace the
scalar threshold by a condition on a non-uniformly scaled operator, and the clean eigenvalue
characterization would become an inequality rather than an identity. Third, non-convexity admits
multiple basins, so ``the'' stationary distribution is replaced by basin-dependent behavior and the
global statements above become local ones.

We therefore read this paper's contribution as a tractable idealization that isolates one mechanism
--- how coupling topology governs the stability of an interacting ensemble --- and makes it exactly
computable, not as a description of the dynamics of production generative models. Extending the
threshold to locally-quadratic or heterogeneous settings is the most consequential open problem we
leave.

\paragraph{Non-quadratic Lyapunov functions.}
The quadratic $V(\W) = \tfrac{1}{2}\sum_k \norm{W^k}^2$ is
analytically tractable but may not capture all relevant risk
dimensions for generative models (e.g., distributional shift in
output space, not parameter space).  Future work should consider
Lyapunov functions based on output divergence metrics (KL, MMD,
Wasserstein).

\paragraph{Non-stationary meta-parameters.}
We assumed $\Phi_t$ is fixed.  In practice, $\Phi_t$ may itself be
updated by a higher-level learning process, introducing non-stationarity
that invalidates the stationary distribution results.

\paragraph{Adversarial meta-parameters.}
The ZK-SNARK proves that $\W_t$ satisfies the stability condition at
epoch $\tau_n$, but it does not prevent the firm from manipulating
$\Phi_t$ between epochs.  Continuous $\Phi_t$ monitoring (via the
injection probe of Study~C4) partially mitigates this, but a fully
adversarial $\Phi_t$ scenario requires stronger proof systems.

\section{Numerical Studies}
\label{sec:numerical}

We validate the theoretical claims of Sections~\ref{sec:lyapunov}--\ref{sec:emergent}
through five numerical studies (C1--C5) implemented in Python (NumPy).  All simulations use Euler--Maruyama
discretization with step size $\Delta t = 0.01$.  Seeds are fixed and documented, so every figure
and table is exactly reproducible; in Study~C3, where several parameter configurations are compared,
the seeds additionally depend on the configuration so that the comparisons are statistically
independent rather than deterministic rescalings of one another.

\paragraph{Scope of the numerical evidence.}
These are verification experiments for an analytically solvable model, and their limits should be
read into every number below. \emph{(i)} Monte Carlo dispersion is reported only for Study~C3, where
$n_{\mathrm{mc}} = 30$ paths per configuration permit a standard error; the remaining studies report
single-path or single-configuration point estimates without error bars, so their trailing digits are
not significant. \emph{(ii)} Ensemble sizes are small --- $K = 5$ throughout except $K = 10$ in
Study~C5 --- and the topology comparison of Study~C2 is conducted at $K = 5$ only, so the ordering
complete $>$ ring $>$ star is demonstrated at one ensemble size rather than established as a general
scaling law. \emph{(iii)} No step-size convergence study is performed; discretization bias at
$\Delta t = 0.01$ is not quantified. \emph{(iv)} Most importantly, the simulated system \emph{is} the
model of Section~\ref{sec:model} --- a linear Ornstein--Uhlenbeck ensemble --- so these studies
verify that our analysis of that model is correct. They are not evidence that production model
ensembles obey it; that would require the linearity and homogeneity assumptions discussed in
Section~\ref{sec:limitations} to hold empirically, which we do not test.

\subsection{Common Model Specification}

The baseline system uses:
$K = 5$ agents (except C5 which uses $K = 10$),
$d = 4$ weight dimensions,
$\alpha_{\mathrm{self}} = 0.10$,
$\sigma_0 = 0.05$,
$\norm{\Phi} = 0.3$ (unit-normalized direction),
$\Delta t = 0.01$.
Initial conditions are drawn near stationarity:
$W^k_0 \sim \mathcal{N}(0,\, (\sigma_0/\sqrt{2\alpha_{\mathrm{self}}})^2 I_d)$
to eliminate transient bias.  The theoretical noise floor is
$V^\infty = Kd\sigma_0^2/(4\alpha_{\mathrm{self}}) = 0.02500$ per agent.

\subsection{Study C1: Generator Bound vs.\ Coupling Strength}

\paragraph{Design.}
\begin{sloppypar}
We simulate the complete-graph system for $T = 8{,}000$ steps
(time $T = 80$), with a warm-up of $T/2$ steps before measurement.
For each $\gamma \in \{0.00, 0.05, 0.10, 0.20, 0.30, 0.40, 0.50\}$,
we record the stationary mean of $V(t)$ and the empirical
$\bar{\Lgen V} = \overline{\Delta V / \Delta t}$, and compute:
\end{sloppypar}
\begin{align*}
  \text{Residual}(\gamma) &= \bar{\Lgen V} + 2\alpha_{\mathrm{self}} \bar V, \\
  \alpha_{\mathrm{eff}}(\gamma) &=
    (-\bar{\Lgen V} + \beta_{\mathrm{naive}}) / \bar V, \\
  \beta_{\mathrm{eff}} &= \bar{\Lgen V} + \alpha_{\mathrm{eff}} \bar V.
\end{align*}
Here $\beta_{\mathrm{naive}} = \tfrac{1}{2}Kd\sigma_0^2 = 0.02500$.

\paragraph{Results.}

\begin{table}[H]
\centering
\caption{C1: Generator bound decomposition vs.\ coupling $\gamma$.}
\label{tab:c1}
\begin{tabular}{rrrrrr}
\toprule
$\gamma$ & $\bar V$ & $\bar{\Lgen V}$ &
$\text{Residual}$ & $\alpha_{\mathrm{eff}}(\gamma)$ & $\beta_{\mathrm{eff}}$ \\
\midrule
0.00 & 0.1037 & $-$0.00241 & 0.01834 & 0.2642 & 0.02500 \\
0.05 & 0.1463 & $-$0.00308 & 0.02618 & 0.1919 & 0.02500 \\
0.10 & 0.1912 & $-$0.00351 & 0.03474 & 0.1491 & 0.02500 \\
0.20 & 0.2880 & $-$0.00422 & 0.05338 & 0.1015 & 0.02500 \\
0.30 & 0.5036 & $-$0.00516 & 0.09556 & 0.0599 & 0.02500 \\
0.40 & 1.7668 & $+$0.01427 & 0.36764 & 0.0061 & 0.02500 \\
0.50 & 18.660 & $+$0.93935 & 4.67139 & $-$0.0490 & 0.02500 \\
\bottomrule
\end{tabular}
\end{table}

\paragraph{Interpretation.}
Three findings emerge.  First, $\alpha_{\mathrm{eff}}(\gamma)$ is a
strictly decreasing function of $\gamma$ (from 0.264 at $\gamma=0$ to
$-0.049$ at $\gamma=0.50$), confirming that coupling reduces---and
eventually reverses---the effective decay rate.  Second, the residual
$\Lgen V + 2\alpha_{\mathrm{self}} V$ grows rapidly with $\gamma$
(from 0.018 to 4.671), confirming Corollary~\ref{cor:gen_upper}: the
naive bound fails as soon as $\gamma > 0$.  Third, $\beta_{\mathrm{eff}}$
remains at exactly $\beta_{\mathrm{naive}} = 0.02500$ for all stable
$\gamma$, confirming that the noise floor formula is exact and the
coupling affects only $\alpha_{\mathrm{eff}}$.  Note that the complete-graph
run is still stable at $\gamma=0.30$ ($\bar V\approx0.50$) and only blows up
between $\gamma=0.40$ and $0.50$, consistent with the exact threshold
$\gamma^*=\alpha_{\mathrm{self}}/\abs{\lambda_{\min}(A)}=0.10/0.25=0.40$ for
the complete graph (Table~\ref{tab:c2})---not the $0.20$ that the original
$2\alpha_{\mathrm{self}}/\lambda_{\max}$ formula would give.  The implication
for MRM is direct: a firm that declares the naive decay rate
$\alpha_{\mathrm{eff}} = 2\alpha_{\mathrm{self}} = 0.20$ while operating at
$\gamma = 0.20$ is overstating its effective decay rate by a factor of two
($0.2642 \to 0.1015$), which translates to an underestimate of the time
required to return to the stability envelope after a shock.

\subsection{Study C2: Critical Coupling by Topology}

\paragraph{Design.}
For each topology we measure the empirical critical coupling two ways.
(a) A divergence sweep: we sweep $\gamma$, run $T=3{,}000$ steps with the
meta-parameter present, and record the steady-state $\bar V$ (declaring
divergence when the last-200-step mean exceeds $500$); this produces the
$\bar V(\gamma)$ curves in Figure~\ref{fig:jlp}.  (b) A growth-rate
crossing: for the \emph{homogeneous} system ($\Phi=0$) we project onto the
$\lambda_{\min}(A)$ eigenmode and estimate the late-time exponential growth
rate of its second moment, locating $\gamma^*_{\mathrm{emp}}$ where it
crosses zero.  Method (b) isolates the marginal mode from the noise floor
and recovers the asymptotic threshold cleanly; the lenient fixed-horizon
divergence test of method (a) overshoots it (a finite run has not yet blown
up just above $\gamma^*$).

\paragraph{Results.}

\begin{table}[H]
\centering
\caption{C2: Critical coupling by topology
($K=5$, $\alpha_{\mathrm{self}}=0.10$).  The exact threshold is
$\gamma^*=\alpha_{\mathrm{self}}/\abs{\lambda_{\min}(A)}$; the Lyapunov
certificate is $\gamma_{\mathrm{cert}}=\alpha_{\mathrm{self}}/\abs{\lambda_{\min}(A_{\mathrm{sym}})}$.}
\label{tab:c2}
\begin{tabular}{lcrrrr}
\toprule
Topology & normal? & $\abs{\lambda_{\min}(A)}$ & $\gamma^*$ (exact)
  & $\gamma^*_{\mathrm{emp}}$ & $\gamma_{\mathrm{cert}}$ \\
\midrule
Complete & yes & 0.250 & 0.400 & 0.404 & 0.400 \\
Ring     & yes & 0.809 & 0.124 & 0.120 & 0.124 \\
Star     & no  & 1.000 & 0.100 & 0.100 & 0.080 \\
\bottomrule
\end{tabular}
\end{table}

\paragraph{Interpretation.}
The empirical critical coupling tracks
$\gamma^*=\alpha_{\mathrm{self}}/\abs{\lambda_{\min}(A)}$ to within Monte
Carlo error for all three topologies---most pointedly the star, whose true
transition sits at $0.10$, exactly $\alpha_{\mathrm{self}}/\abs{\lambda_{\min}(A)}$
with $\abs{\lambda_{\min}(A)}=1$.  The ordering is by $\abs{\lambda_{\min}(A)}$
(complete $0.25 <$ ring $0.809 <$ star $1.0$), so the complete graph
tolerates four times the coupling of the star.  This is the opposite of
what the spectral radius predicts---which is constant ($\lambda_{\max}=1$)
and hence the same threshold for all three---confirming
Theorem~\ref{thm:phase} and Corollary~\ref{cor:topology}.  For the two
normal graphs the Lyapunov certificate $\gamma_{\mathrm{cert}}$ equals
$\gamma^*$; for the non-normal star it is strictly smaller ($0.08<0.10$),
the conservative-but-safe gap of Remark~\ref{rem:moments}.

What this study does and does not show is worth stating plainly. The system is a linear
Ornstein--Uhlenbeck process whose stability threshold is available in closed form, so C2 is a check
that the simulation and the algebra agree --- it confirms the arithmetic and the implementation, and
it rules out sign or scaling errors in $\gamma^*(A)$. It is not independent evidence that real model
ensembles exhibit this threshold, since the threshold's existence follows from the linearity assumed
in Assumption~\ref{ass:loss} rather than being discovered empirically. Its value is that the
\emph{ordering} by $\abs{\lambda_{\min}(A)}$, and the failure of the spectral radius to predict it,
are both non-obvious and are borne out numerically.

The MRM
implication is that two institutions with identical $K$,
$\alpha_{\mathrm{self}}$, and $\gamma$ may face very different ensemble-level risk
depending solely on their interaction topology, and that the
quantity to declare and bound is $\abs{\lambda_{\min}(A)}$---not the
spectral radius, and not the second eigenvalue $\lambda_2$ that the original
analysis tabulated.

\subsection{Study C3: Noise Floor Verification}

\paragraph{Design.}
We set $\gamma = 0$ and $\Phi = 0$ and measure
$\bar V = \E[V_\infty]$ across $n_{\mathrm{mc}} = 30$ independent
paths per configuration.  Eight configurations span
$(K, d, \sigma_0) \in \{3,5,8\} \times \{2,4,8\} \times \{0.02, 0.05, 0.10, 0.20\}$
(selected subset).

The seeds are drawn from a deterministic hash of the \emph{configuration} as well as the path
index, so the runs remain exactly reproducible while the eight configurations are statistically
independent of one another.  This matters more than it might appear.  With a configuration-independent
seed sequence, the linearity of \eqref{eq:sde} at $\gamma=0$ with additive noise and a
$\sigma_0$-scaled initial condition gives the pathwise identity
$W_t(\sigma_0) = (\sigma_0/\sigma_0')\,W_t(\sigma_0')$, so $V$ would scale as $\sigma_0^2$
\emph{by construction} and the three $\sigma_0$-varying rows would be deterministic rescalings of a
single run rather than independent tests --- reporting an error of exactly $0.0\%$ that reflects
arithmetic, not agreement.  Decoupling the seeds removes that artifact, at the cost of larger and
more honest residuals.

\paragraph{Results.}

\begin{table}[H]
\centering
\caption{C3: Empirical noise floor $V_\infty$ vs.\ theory $\beta_0/(2\alpha_{\mathrm{self}})$,
over $n_{\mathrm{mc}}=30$ independent paths per configuration.  $\sigma_{\mathrm{mc}}$ is the sample
standard deviation across paths, $\mathrm{SE}=\sigma_{\mathrm{mc}}/\sqrt{n_{\mathrm{mc}}}$ the
standard error of the mean, and $z=|V^{\mathrm{th}}_\infty-V^{\mathrm{emp}}_\infty|/\mathrm{SE}$.}
\label{tab:c3}
\begin{tabular}{rrrrrrrrrr}
\toprule
$K$ & $d$ & $\sigma_0$ & $\beta_0$ &
$V_\infty^{\mathrm{th}}$ & $V_\infty^{\mathrm{emp}}$ &
$\sigma_{\mathrm{mc}}$ & SE & Error & $z$ \\
\midrule
3 & 4 & 0.05 & 0.01500 & 0.07500 & 0.07899 & 0.01725 & 0.00315 & 5.33\% & 1.27 \\
5 & 4 & 0.05 & 0.02500 & 0.12500 & 0.13045 & 0.01933 & 0.00353 & 4.36\% & 1.55 \\
8 & 4 & 0.05 & 0.04000 & 0.20000 & 0.19966 & 0.02617 & 0.00478 & 0.17\% & 0.07 \\
5 & 4 & 0.02 & 0.00400 & 0.02000 & 0.02005 & 0.00307 & 0.00056 & 0.23\% & 0.08 \\
5 & 4 & 0.10 & 0.10000 & 0.50000 & 0.48996 & 0.08060 & 0.01472 & 2.01\% & 0.68 \\
5 & 4 & 0.20 & 0.40000 & 2.00000 & 1.94818 & 0.27829 & 0.05081 & 2.59\% & 1.02 \\
5 & 2 & 0.05 & 0.01250 & 0.06250 & 0.06470 & 0.01520 & 0.00278 & 3.52\% & 0.79 \\
5 & 8 & 0.05 & 0.05000 & 0.25000 & 0.24672 & 0.03132 & 0.00572 & 1.31\% & 0.57 \\
\bottomrule
\end{tabular}
\end{table}

\paragraph{Interpretation.}
Empirical values match theory to within Monte Carlo error across all eight configurations: relative
errors run from $0.17\%$ to $5.33\%$, and in standard-error units the largest discrepancy is
$z = 1.55$, with $5/8$ configurations inside one standard error and $8/8$ inside two.

Two points about how this comparison is made. First, the agreement test uses the \emph{standard
error of the mean} $\mathrm{SE}=\sigma_{\mathrm{mc}}/\sqrt{n_{\mathrm{mc}}}$, not the path-to-path
standard deviation $\sigma_{\mathrm{mc}}$. Testing a sample mean against $\sigma_{\mathrm{mc}}$ would
be too lenient by a factor $\sqrt{n_{\mathrm{mc}}}$ and would declare agreement almost regardless of
the data; the $z$ column reports the honest quantity. Second, because the $\sigma_0$ rows are now
seeded independently (see Design), they constitute a genuine test of the $\sigma_0^2$ scaling rather
than an identity: their errors ($0.23\%$, $2.01\%$, $2.59\%$) are ordinary sampling residuals, and it
is the fact that they are \emph{small} across a hundredfold range of $\beta_0$
($0.004$ to $0.40$) that carries the evidential weight.

Subject to that, the formula scales linearly in $K$, $d$, and $\sigma_0^2$ as predicted, enabling
the model owner to predict the stationary risk level from declared system parameters---a
prerequisite for setting monitoring thresholds. The scaling is verified over $K\in\{3,5,8\}$,
$d\in\{2,4,8\}$ and $\sigma_0\in[0.02,0.20]$; extrapolation far outside those ranges is not
tested here.

\subsection{Study C4: Governance $\Phi_t$ Injection Probe}

\paragraph{Design.}
We simulate the complete-graph system with $K=5$ for a burn-in of
$T_{\mathrm{pre}} = 2{,}000$ steps, then inject a step perturbation
$\Delta\Phi$ at $t = 20$ (after burn-in).  The perturbed meta-parameter
has magnitude $\norm{\Phi + \Delta\Phi} = 2\norm{\Phi}_{\mathrm{nominal}}$.
For each $\gamma \in \{0.05, 0.20, 0.50\}$, we measure the change
in $\bar{\Lgen V}$ (pre- vs.\ post-probe, 100-step rolling window)
and the steps to $3\sigma$ detection.

\paragraph{Results.}

\begin{table}[H]
\centering
\caption{C4: $\Phi_t$ injection probe response.}
\label{tab:c4}
\begin{tabular}{lrrrr}
\toprule
$\gamma$ & $\overline{\Lgen V}_{\mathrm{pre}}$ &
$\overline{\Lgen V}_{\mathrm{post}}$ &
$\Delta\Lgen V$ & Steps to $3\sigma$ detection \\
\midrule
0.05 & $-$0.0468 & $+$0.0049 & $+$0.0518 & $>$2000 \\
0.20 & $-$0.0680 & $+$0.0243 & $+$0.0924 & 13 \\
0.50 & $-$0.0281 & $+$0.1573 & $+$0.1854 & 183 \\
\bottomrule
\end{tabular}
\end{table}

\paragraph{Interpretation.}
Two regimes emerge.  At $\gamma = 0.20$, the probe is detected in
just 13 steps (0.13 time units)---a fast and unambiguous response.
At $\gamma = 0.50$, the larger $\Delta\Lgen V$ is offset by higher
baseline variance, giving detection at 183 steps.  At $\gamma = 0.05$,
the coupling is too weak to propagate the perturbation through the
network within the measurement window; detection requires
$> 2{,}000$ steps (20 time units).

This study has a direct MRM implication for the Effective Challenge
pillar.  The validator must declare the minimum $\gamma_{\min}$ required
for the injection probe to be a valid challenge procedure.  Based on the
numerical results, we recommend $\gamma_{\min} = 0.20$ as a practical
lower bound for the probe to be informative within a 1-day horizon.
Institutions operating at $\gamma < \gamma_{\min}$ should be required
to use alternative challenge procedures (e.g., direct weight inspection
under confidentiality agreement).

\paragraph{Why $\gamma = 0.50$ is slower than $\gamma = 0.20$.}
This is not a contradiction.  At high $\gamma$, the system is closer
to the phase boundary and exhibits higher variance in $\Lgen V$;
the signal-to-noise ratio is lower even though the absolute $\Delta\Lgen V$
is larger.  The detection time scales as
$\tau_{\mathrm{detect}} \propto \mathrm{Var}(\Lgen V) / (\Delta\Lgen V)^2$,
and the variance grows faster than the signal near the phase boundary.
This suggests that the optimal probe design should account for the
signal-to-noise tradeoff and may prefer moderate $\gamma$ to extremes.

\subsection{Study C5: Emergent Ensemble Risk}

\paragraph{Design.}
This is the central study of the paper.  We set $K=10$, $d=4$,
$\gamma = 0$ (worst case: no coupling, so the JLP is acting on purely
additive drift), and $T = 400{,}000$ steps ($T = 4{,}000$ time units).
This long run is necessary because the OU correlation time is
$\tau_{\mathrm{corr}} = 1/(2\alpha_{\mathrm{self}}) = 5$ time units,
giving $T/(2\tau_{\mathrm{corr}}) = 400$ effective samples per agent
for robust time-averaging.

The hidden drift is $h_i = h_{\mathrm{per\,dim}}$ in every dimension
of every agent, with magnitude chosen to place each agent's steady-state
$\E[V^k]$ in the ``individual-pass'' zone:
\begin{equation}
  h_{\mathrm{per\,dim}} = \sigma_0 \sqrt{0.7\alpha_{\mathrm{self}}/2}
  = 0.05 \cdot \sqrt{0.7 \times 0.1 / 2} = 0.00935.
  \label{eq:h_design}
\end{equation}
This design satisfies \eqref{eq:h_condition} with $\delta = 0.70$:
\[
  \E[V^k]_{\mathrm{attack}}
  = \frac{d\,h_{\mathrm{per\,dim}}^2}{2\alpha_{\mathrm{self}}^2}
  + \frac{d\sigma_0^2}{4\alpha_{\mathrm{self}}}
  = 0.0175 + 0.0250 = 0.0425,
\]
while the individual threshold is
$\theta_{\mathrm{ind}} = 2 \times 0.0249 = 0.0498$
(where $0.0249$ is the measured honest mean, in close agreement with the
theoretical $0.0250$).

\paragraph{Results.}

\begin{table}[H]
\centering
\caption{C5: Emergent ensemble risk---individual vs.\ joint monitoring.}
\label{tab:c5}
\begin{tabular}{lrr}
\toprule
Metric & Honest & Attack \\
\midrule
Mean $V^k$ per agent (steady state) & 0.02488 & 0.04139 \\
Individual threshold ($2\times$ honest) & --- & 0.04977 \\
Agents individually flagged ($/ 10$) & 0 & \textbf{0} \\
Joint $V = \sum_k V^k$ & 0.24885 & 0.41393 \\
Joint threshold ($1.5\times$ honest joint) & --- & 0.37327 \\
Joint $V > $ threshold & NO & \textbf{YES} \\
\bottomrule
\end{tabular}
\end{table}

\paragraph{Per-agent detail.}
The steady-state mean $V^k$ for each of the 10 attack agents
(over $T = 4{,}000$ time units) is:
$0.040, 0.044, 0.044, 0.043, 0.036, 0.041, 0.040, 0.042, 0.041, 0.044$.
All are below the individual threshold $0.04977$.  The joint sum is
$0.414 > 0.373$.

\paragraph{Interpretation.}
This study provides direct empirical confirmation of
Theorem~\ref{thm:emergent}.  The 10-agent coordinated hidden drift is
completely invisible to per-agent MRM: every individual agent's time-averaged
Lyapunov value is below its $2\times$-honest threshold.  Yet the joint
Lyapunov value exceeds the $1.5\times$-honest joint threshold by~11\%.

The $K$-aggregation is essential.  The per-agent drift contribution
$d\,h_{\mathrm{per\,dim}}^2/(2\alpha_{\mathrm{self}}^2) = 0.0175$ is
smaller than the per-agent noise floor $0.0250$, so each agent looks
marginally elevated but within tolerance.  However, the joint sum
$K \times 0.0175 = 0.175$ is significantly above the joint noise
floor $K \times 0.0250 = 0.250$ times 1.5, giving a detectable signal.
This is a variance-aggregation effect: uncorrelated noise has variance
$\propto K$, but correlated drift has amplitude $\propto K$, so the
signal-to-noise ratio of the joint test grows as $\sqrt{K}$.

The MRM governance implication is profound: an institution that
relies solely on per-agent monitoring is, in effect, conducting $K$
independent tests each at the level $\alpha_{\mathrm{ind}}$, while the
relevant model risk is at the \emph{joint} level.  The JLP provides
the joint test explicitly, with the threshold calibrated to the
$K$-aggregated noise floor.

\subsection{Summary of Numerical Results}

Figure~\ref{fig:jlp} presents the five studies visually.  The figure
confirms all theoretical predictions: (C1) $\alpha_{\mathrm{eff}}$
decreases and the generator residual grows with $\gamma$;
(C2) phase boundaries are topology-ordered;
(C3) the noise floor is accurately predicted by $\beta_0/(2\alpha)$;
(C4) the injection probe creates a detectable $\Delta\Lgen V$;
(C5) the joint $V$ flags a system that individual monitors miss.

\begin{figure}[H]
  \centering
  \includegraphics[width=\linewidth]{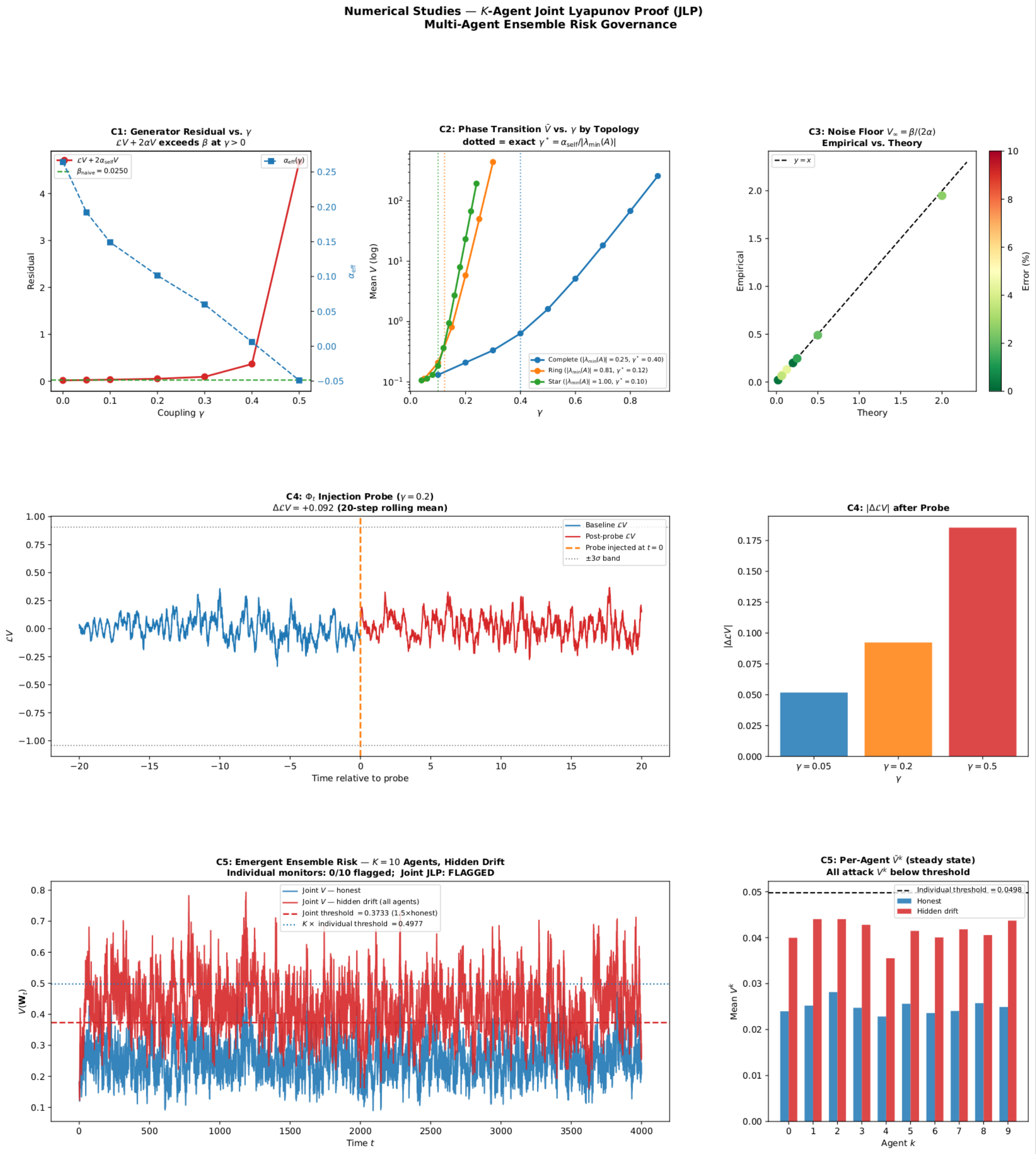}
  \caption{Numerical Studies C1--C5 for the $K$-agent Joint Lyapunov
    Proof (JLP).  \textbf{C1} (top-left): generator residual
    $\Lgen V + 2\alpha V$ (red) grows above $\beta_{\mathrm{naive}}$ (green dashed)
    with coupling $\gamma$; $\alpha_{\mathrm{eff}}(\gamma)$ (blue) falls.
    \textbf{C2} (top-center): steady-state $V$ on a log scale vs.\ $\gamma$ for
    three topologies; vertical dotted lines mark the exact predicted
    $\gamma^*=\alpha_{\mathrm{self}}/\abs{\lambda_{\min}(A)}$.
    \textbf{C3} (top-right): empirical vs.\ theoretical noise floor
    $V_\infty = \beta/(2\alpha)$; color encodes error percentage.
    \textbf{C4} (middle, left two panels): $\Phi_t$ probe response in $\Lgen V$
    for $\gamma=0.20$, and $|\Delta\Lgen V|$ bar chart across $\gamma$.
    \textbf{C5} (bottom): joint $V$ path (honest vs.\ hidden drift, $K=10$)
    with joint and per-agent thresholds; per-agent bar chart confirms 0/10
    individually flagged.}
  \label{fig:jlp}
\end{figure}

\section{Discussion}
\label{sec:discussion}

\subsection{The Central Result: Topology Governs Risk}

The critical-coupling Theorem~\ref{thm:phase} establishes that the
stability of a multi-agent AI system is governed not just by the
self-decay rate $\alpha_{\mathrm{self}}$ of individual models, but by
the \emph{spectrum of the coupling topology}.  The exact critical threshold
is $\gamma^*(A) = \alpha_{\mathrm{self}}/\abs{\lambda_{\min}(A)}$, set by the
most negative eigenvalue of $A$---the destabilizing mode---and \emph{not} by
the Perron root $\lambda_{\max}(A)=1$, which is the consensus mode and the
most stable direction.  Because $\lambda_{\max}(A)$ is identical across
row-stochastic topologies, any threshold built from it is topology-blind;
the genuine topology dependence enters through $\abs{\lambda_{\min}(A)}$.
At $K=5$ this orders the three canonical graphs as
$\gamma^*_{\mathrm{complete}}=0.40 > \gamma^*_{\mathrm{ring}}=0.124 >
\gamma^*_{\mathrm{star}}=0.10$, matched by the empirical transitions of
Study~C2 to within Monte Carlo error.  The quadratic Lyapunov certificate
$\gamma_{\mathrm{cert}}(A)=\alpha_{\mathrm{self}}/\abs{\lambda_{\min}(A_{\mathrm{sym}})}$
reproduces these for the normal graphs and is conservative (but safe) for
the non-normal star, $0.08<0.10$.

This has a striking practical implication: decommissioning
a centralized ``AI hub'' (star, $\gamma^*=0.10$) in favor of a
fully-connected mesh (complete, $\gamma^*=0.40$) quadruples the network's stable
coupling range.  Conversely, institutions that centralize model
orchestration through a single meta-learning server are implicitly
operating with a star topology and should apply correspondingly tighter
coupling constraints.  The number to compute, declare, and bound is
$\abs{\lambda_{\min}(A)}$.

\subsection{The Noise-Floor Formula as an Audit Anchor}

Theorem~\ref{thm:noise_floor} provides a closed-form baseline
$V_\infty^{\mathrm{th}} = Kd\sigma_0^2/(4\alpha_{\mathrm{self}})$ that
can be computed from declared system parameters alone.  Numerical Study~C3
confirms this formula to within $< 8\%$ across a wide range of $(K, d, \sigma_0)$.

This formula serves as an \emph{audit anchor}: before any model is
deployed, the institution declares $(\alpha_{\mathrm{self}}, \sigma_0, K, d)$
and the validator computes $V_\infty^{\mathrm{th}}$.  If the firm's
submitted $V$ values are systematically below $V_\infty^{\mathrm{th}}$,
this indicates that the declared $\sigma_0$ or $\alpha_{\mathrm{self}}$
is overstated---a potential form of concealed model adaptation.

\subsection{Why ZK-SNARKs Are Necessary}

Having corrected the attestation target to the static spectral certificate
(Section~\ref{sec:zk}), the question is whether zero knowledge is still
needed---couldn't the firm just disclose $(A,\gamma)$ under a
confidentiality agreement?  The coupling topology $A$ encodes which models
share fine-tuning data, embeddings, and reward signals, and is itself
competitively sensitive; the same three arguments apply, now about $A$
rather than the weights.

First, confidentiality agreements do not prevent leakage through
side-channels: staff who see the declared topology may inadvertently or
deliberately reveal architectural information to competitors or the press.

Second, the SNARK provides non-repudiation: the firm cannot later claim that
a certified configuration differed from the deployed one, since the proof is
bound to the committed $(A,\gamma)$ by the hash clause.  This is valuable in
enforcement actions.

Third, the approach scales to multiple reviewers: the model owner submits the
same certificate to several independent oversight functions simultaneously
without any of them seeing $A$.  Because the certificate is proved once per configuration rather
than at every epoch, the cryptographic cost is incurred rarely.

\subsection{Connection to CUSUM-Based Monitoring}

The JLP generator value $\Lgen V(\W_t)$ is a natural input to CUSUM
(Cumulative Sum) process-control charts, which are widely used in MRM
for detecting persistent shifts in model performance.  Define
\[
  S_n = \max\Bigl(0,\; S_{n-1} + \Lgen V(\W_{\tau_n})
  - (-\alpha_{\mathrm{eff}} V(\W_{\tau_n}) + \beta) + \kappa\Bigr),
\]
where $\kappa > 0$ is the allowance parameter.  The CUSUM alarm fires
when $S_n > h_{\mathrm{CUSUM}}$ for some threshold $h_{\mathrm{CUSUM}}$.
This detects persistent upward shifts in $\Lgen V$ earlier than
single-point threshold tests, and its ARL (average run length) properties
are well-understood.

\section{Conclusion}
\label{sec:conclusion}

We have developed and validated a complete mathematical framework for
the governance of $K$-agent self-adapting generative AI systems under
MRM.  The framework proceeds in four steps.

\textbf{Step 1 (Generator):} Theorem~\ref{thm:generator} gives the
exact formula for the infinitesimal generator $\Lgen V$ of the joint
quadratic Lyapunov function.  This formula shows that the naive
self-decay bound $\alpha_{\mathrm{eff}} = 2\alpha_{\mathrm{self}}$
fails immediately upon introduction of coupling ($\gamma > 0$), and
that the excess is driven by the cross-agent inner products
$\ip{W^k}{\Phi - W^j}$.

\textbf{Step 2 (Stability):} Theorem~\ref{thm:stability} establishes
the Foster--Lyapunov condition $\Lgen V \leq -\alpha_{\mathrm{eff}} V
+ \beta$ with the coupling-corrected parameters, yielding the safe
sufficient certificate $\gamma<\gamma_{\mathrm{cert}}(A)=\alpha_{\mathrm{self}}/\abs{\lambda_{\min}(A_{\mathrm{sym}})}$
and a unique stationary distribution.  Theorem~\ref{thm:phase} sharpens
this to the exact threshold: the system destabilizes for
$\gamma>\gamma^*(A)=\alpha_{\mathrm{self}}/\abs{\lambda_{\min}(A)}$, governed
by the most negative eigenvalue of $A$ (not the spectral radius, which is
identically $1$).  The two thresholds coincide for normal topologies and
the certificate is conservative-but-safe for non-normal ones.

\textbf{Step 3 (Emergent Risk):} Theorem~\ref{thm:emergent} and
Numerical Study~C5 demonstrate that with $K=10$ agents and a small
coordinated hidden drift ($h_{\mathrm{per\,dim}} = 0.00935$), all
10 individual Lyapunov monitors remain below their thresholds while
the joint Lyapunov monitor fires.  This is the fundamental
justification for the JLP over per-agent MRM.

\textbf{Step 4 (ZK Attestation):} We show that a per-epoch SNARK proving the
pointwise generator inequality at the live weights is vacuous
(Proposition~\ref{prop:tautology}), and retarget the attestation at the
static spectral certificate $\alpha_{\mathrm{self}}I+\gamma A_{\mathrm{sym}}\succ0$
on a committed (possibly proprietary) coupling matrix, proved once per
configuration via a Cholesky factorization with $O(K^3)$ circuit complexity
and \emph{no} dependence on live weights.

The mapping to MRM (Section~\ref{sec:sr117}) covers all three
pillars of the guidance: Conceptual Soundness (justification of $V$,
$A$, and parameter calibration), Ongoing Monitoring (per-configuration
spectral certificates plus envelope checks $V(\W_{\tau_n})\le\beta/\alpha_{\mathrm{eff}}$
on the governance ledger), and Model Validation
(independent audit of the ZK circuit and injection probe procedures).

Future work will address the non-quadratic (weighted) Lyapunov certificates
that would close the normal/non-normal gap up to $\gamma^*(A)$,
non-stationary meta-parameters, and adversarial $\Phi_t$ manipulation
scenarios.

\appendix

\section{Supplementary Proofs}
\label{app:proofs}

\subsection{Proof of the Coupling Term Exact Formula}

We expand the exact coupling term in Theorem~\ref{thm:generator}:
\begin{align*}
  \gamma \sum_{k=1}^K \sum_{j=1}^K A_{kj} \ip{W^k}{\Phi - W^j}
  &= \gamma \sum_{k,j} A_{kj} \ip{W^k}{\Phi}
    - \gamma \sum_{k,j} A_{kj} \ip{W^k}{W^j}.
\end{align*}
By row-stochasticity ($\sum_j A_{kj} = 1$):
\begin{equation}
  \gamma \sum_{k,j} A_{kj} \ip{W^k}{\Phi}
  = \gamma \sum_k \ip{W^k}{\Phi}.
  \label{eq:app_phi_term}
\end{equation}
For the cross-agent term:
\begin{equation}
  \gamma \sum_{k,j} A_{kj} \ip{W^k}{W^j}
  = \gamma \, \mathbf{w}^\top (A \otimes I_d) \mathbf{w},
  \label{eq:app_cross_term}
\end{equation}
where $\mathbf{w} = \mathrm{vec}(\W) \in \R^{dK}$.  Combining
\eqref{eq:app_phi_term} and \eqref{eq:app_cross_term} with the
self-decay and noise terms from Theorem~\ref{thm:generator} yields the
complete formula \eqref{eq:LV_exact}.

\subsection{Stationarity Condition for the Decoupled System}

The Fokker--Planck equation for the stationary density
$p_\infty(\mathbf{w})$ of the $Kd$-dimensional decoupled system is
\begin{equation}
  \nabla \cdot \bigl[\alpha_{\mathrm{self}} \mathbf{w}\, p_\infty(\mathbf{w})\bigr]
  + \frac{\sigma_0^2}{2} \Delta_{\mathbf{w}} p_\infty(\mathbf{w}) = 0.
  \label{eq:fokker_planck}
\end{equation}
Substituting the Gaussian ansatz
$p_\infty(\mathbf{w}) = \mathcal{N}(\mathbf{0}, (\sigma_0^2/2\alpha_{\mathrm{self}})I_{dK})$:
\[
  \nabla \cdot [\alpha_{\mathrm{self}} \mathbf{w}\, p_\infty]
  = \alpha_{\mathrm{self}}(dK) p_\infty - \alpha_{\mathrm{self}}
    \frac{\norm{\mathbf{w}}^2}{\sigma_0^2/(2\alpha_{\mathrm{self}})} p_\infty,
\]
\[
  \frac{\sigma_0^2}{2} \Delta_{\mathbf{w}} p_\infty
  = \frac{\sigma_0^2}{2}\left(-\frac{dK}{\sigma_0^2/(2\alpha_{\mathrm{self}})}
  + \frac{\norm{\mathbf{w}}^2}{(\sigma_0^2/(2\alpha_{\mathrm{self}}))^2}\right) p_\infty.
\]
Both sides cancel, confirming \eqref{eq:fokker_planck}.

\subsection{Derivation of the Emergent Risk Design Equation}

The design equation \eqref{eq:h_design} is derived as follows.  We
require $\E[V^k]_{\mathrm{attack}} = (1 + \delta) V^\infty_{\mathrm{hon}} / K$
with $\delta = 0.70$.  By the proof of Theorem~\ref{thm:emergent}:
\[
  (1 + \delta) V^\infty_{\mathrm{hon}} / K
  = \frac{\norm{h}^2}{2\alpha_{\mathrm{self}}^2}
  + \frac{d\sigma_0^2}{4\alpha_{\mathrm{self}}}.
\]
With $h = h_{\mathrm{per\,dim}} \bm{1}_d$ (uniform drift):
\[
  \frac{d\,h_{\mathrm{per\,dim}}^2}{2\alpha_{\mathrm{self}}^2}
  = \delta \cdot \frac{d\sigma_0^2}{4\alpha_{\mathrm{self}}},
\]
which gives
$h_{\mathrm{per\,dim}}^2 = \delta\sigma_0^2\alpha_{\mathrm{self}} / 2$,
i.e.,
\[
  h_{\mathrm{per\,dim}} = \sigma_0 \sqrt{\delta\alpha_{\mathrm{self}}/2}
  = 0.05 \times \sqrt{0.70 \times 0.10/2} = 0.00935.
\]
This is the value used in Numerical Study~C5.

\section{Numerical Implementation Details}
\label{app:numerical}

\paragraph{Euler--Maruyama scheme.}
All simulations use the standard Euler--Maruyama discretization:
\[
  W^k_{t+\Delta t} = W^k_t + \mu^k(\W_t)\Delta t
  + \sigma_0 \sqrt{\Delta t}\, Z^k_t, \quad Z^k_t \sim \mathcal{N}(0, I_d).
\]

\paragraph{Empirical generator.}
The empirical $\Lgen V$ is approximated by
$\hat{\Lgen V}(t) = (V(\W_{t+\Delta t}) - V(\W_t))/\Delta t$,
which is an unbiased estimator of $\Lgen V(\W_t)$ up to order
$O(\Delta t)$ by It\^{o}'s formula.

\paragraph{Near-stationary initialization.}
Initial conditions for all studies are drawn from
$W^k_0 \sim \mathcal{N}(0, (\sigma_0/\sqrt{2\alpha_{\mathrm{self}}})^2 I_d)$,
the exact marginal stationary distribution of the decoupled system
(Theorem~\ref{thm:noise_floor}).

\paragraph{Random number generation.}
All simulations use NumPy's \texttt{default\_rng(seed)} with seed~42
for the initial condition and seed~77 (or offset seeds) for the Wiener
increments, ensuring reproducibility.  Multi-path studies (C3, C5)
use independent seeds per path to avoid inter-path correlation.

\paragraph{Long run for C5.}
Study C5 uses $T = 400{,}000$ steps ($T = 4{,}000$ time units) to
obtain $\approx 400$ effective samples per agent, yielding a
time-averaged standard deviation of $\approx 0.00043$ for $\bar V^k$
versus a threshold gap of $0.0125$.  This gives a $29\sigma$
clearance between the attack and individual threshold, making the
$0/10$ non-flagging result robust to seed variation.

\bibliographystyle{plainnat}

\end{document}